\newcommand{\bF}{{\bf F}}
\newcommand{\matV}{\ensuremath{\mathcal{V}}}
\newcommand{\matGM}{\ensuremath{\mathcal{RM}}}
\newcommand{\val}{\textsf{Val}}
\newcommand{\lfis}{{\bf LFI}s}
\newcommand{\lfi}{{\bf LFI}}
\newcommand{\mbc}{{\bf mbC}}
\newcommand{\cpl}{\text{\bf CPL}}
\newcommand{\mbC}{{\bf mbC}}
\newcommand{\cl}{{\bf cl}}
\newcommand{\mbCcl}{{\bf mbCcl}}
\newcommand{\mbcci}{{\bf mbCci}}
\newcommand{\mbccl}{{\bf mbCcl}}
\newcommand{\cila}{{\bf Cila}}
\newcommand{\CILA}{{\bf Cila}}
\newcommand{\A}{\ensuremath{\mathcal{A}}}
\newcommand{\matF}{\ensuremath{\mathcal{F}}}
\newcommand{\axci}{{\bf ci}}
\newcommand{\axca}{{\bf ca}}
\newcommand{\MP}{\textbf{MP}}
\newcommand{\kax}{{\bf Ax1}}
\newcommand{\axTrans}{{\bf Ax2}}
\newcommand{\axed}{{\bf Ax3}}
\newcommand{\axeea}{{\bf Ax4}}
\newcommand{\axeeb}{{\bf Ax5}}
\newcommand{\axouda}{{\bf Ax6}}
\newcommand{\axoudb}{{\bf Ax7}}
\newcommand{\axoue}{{\bf Ax8}}
\newcommand{\axtnd}{{\bf Ax9}}
\newcommand{\axexp}{{\bf bc}}
\newcommand{\axcf}{{\bf Ax10}}
\newcommand{\axdc}{{\bf dc}}
\newcommand{\axp}{{\bf P}}
\newcommand{\imp}{\to}
\newcommand{\cons}{\ensuremath{{\circ}}}
\newcommand{\sneg}{\ensuremath{{\sim}}}
\newcommand{\laT}{\textsf{T}}
\newcommand{\lat}{\textsf{t}}
\newcommand{\laF}{\textsf{F}}
\newcommand{\laL}{\textsf{L}}
\newcommand{\calT}{\mathcal{T}}
\newcommand{\bbT}{\mathbb{T}}
\newtheorem{theorem}{Theorem}[section]
\newtheorem{lemma}[theorem]{Lemma}
\newtheorem{prop}[theorem]{Proposition}
\newtheorem{coro}[theorem]{Corollary}
\newtheorem{example}[theorem]{Example}
\newtheorem{defi}[theorem]{Definition}
\newtheorem{remark}[theorem]{Remark}
\title{\textbf{A simple decision procedure for da Costa's $C_n$\\ logics by Restricted Nmatrix semantics}}
\author{Coniglio, Marcelo E.\thanks{coniglio@unicamp.br} }
\author{Toledo, Guilherme V.\thanks{guivtoledo@gmail.com}}
\affil{Institute of Philosophy and the Humanities - IFCH and\\
Centre for Logic, Epistemology and The History of Science - CLE\\
University of Campinas - Unicamp\\
Campinas, SP, Brazil}
\providecommand{\keywords}[1]{\textbf{\textit{Keywords:}} #1}
\begin{document}

\setcounter{page}{1}     

\maketitle

{\em Dedicated to Newton C.A. da Costa, a permanent source of inspiration\\}
%{Restricted Nmatrix semantics: an application to paraconsistent logics $C_n$ of da Costa}

\begin{abstract}
Despite being fairly powerful, finite non-deterministic matrices are unable to characterize some logics of formal inconsistency, such as those found between $\mbCcl$ and $\CILA$. In order to overcome this limitation, we propose here restricted non-deterministic matrices (in short, RNmatrices), which are non-deterministic algebras together with a subset of the set of valuations. This allows us to characterize not only \mbccl\ and \cila\ (which is equivalent, up to language, to da Costa's logic $C_1$) but the whole hierarchy of da Costa's calculi $C_n$. This produces  a novel decision procedure for these logics. Moreover, we show that the RNmatrix semantics proposed here induces naturally a labelled tableau system for each $C_n$, which constitutes another decision procedure for these logics. This new semantics allows us to conceive da Costa's hierarchy of $C$-systems as  a family of (non deterministically) $(n+2)$-valued logics, where $n$ is the number of ``inconsistently true'' truth-values and 2 is the number of ``classical'' or ``consistent'' truth-values, for every $C_n$. 
\end{abstract}

\keywords{da Costa's C-systems; paraconsistent logics; non-deterministic semantics; non-deterministic matrices; swap structures; multialgebras; decidability; Dugundji's theorem.}

\section{Introduction}

In 1963 Newton C. A. da Costa presented his {\em Tese de Cátedra} (similar to {\em Habilitation} Thesis) to the Federal University of Paran\'a,\footnote{Called ``Universidade do Paran\'a'' (University of Paran\'a) at that time.} Brazil, entitled ``Sistemas Formais Inconsistentes'' (Inconsistent Formal Systems, see~\cite{dC63}). The thesis, defended in 1964,  is a groundbreaking
%landmark 
work in the field of paraconsistency, that is, in the study of logical systems containing a negation in which not every contradiction (w.r.t. such negation) trivializes. Indeed, his hierarchy $C_n$ (for $n \geq 1$) of {\em $C$-systems} constitutes the first systematic study in the field of paraconsistency.\footnote{The first paraconsistent formal system introduced in the literature is the {\em Discussive Logic} (or {\em Discursive Logic}) {\bf D2} presented by Stanis\l aw Ja\'{s}kowski in~\cite{jas:48,jas:49}, based on a proposal of his PhD advisor, Jan \L ukasiewicz.} More than this, it introduces the innovative idea of considering, within each calculus $C_n$, a unary (definable) connective $\circ_n$ asserting the {\em well-behavior} (or classical behavior) of a proposition in terms of the explosion law. Namely, in any $C_n$ a contradiction $\{\alpha, \neg\alpha\}$ is not deductively trivial in general (that is, the negation is not {\em explosive}), but any theory containing $\{\alpha, \neg\alpha,\cons_n\alpha\}$ is always deductively trivial (which means that the explosion law is only guaranteed by the conjunction $\alpha \land \neg\alpha \land\cons_n\alpha$). This approach to paraconsistency was afterwards generalized by W. Carnielli and J. Marcos in~\cite{CM} through the notion of {\em logics of formal inconsistency} (\lfis), in which the connective $\cons$ (called {\em consistency operator}) can be a primitive one.

The logics $C_n$ were proved to be non-characterizable by a single finite logical matrix. Moreover, in~\cite{avr:07} it was shown that they are not even characterizable by a  single finite non-deterministic matrix, see Section~\ref{defRNmat}. Despite these results, some decision procedures for the calculi $C_n$ were proposed in the literature, for instance valuations (or bivaluations)~\cite{lop:alv:80} and Fidel structures~\cite{fid:77}. The aim of this paper is presenting a decision procedure for these logics based on the concept of {\em restricted non-deterministic matrices} (RNmatrices). These structures are nothing else than non-deterministic matrices in which the set of valuations is limited to a  suitable subset satisfying certain restrictions.  Moreover, the RNmatrix semantics proposed here induces naturally a labelled tableau system for each $C_n$, which constitutes another decision procedure for these logics.

The paper is organized as follows: in Section~\ref{defRNmat} we introduce the notion of RNmatrices, prove some of their basic properties and give examples of their use. Section~\ref{Cn} recalls the $C$-systems and the notion of \lfis. In Section~\ref{axCL} we show that the limitative result obtained in~\cite{avr:07} concerning Nmatrices can be overcome by means of RNmatrices. In Section~\ref{SectCn}, a $(n+2)$-valued RNmatrix which characterizes the calculus $C_n$ for $n \geq 2$ is defined, constituting a new decision procedure  for these logics. In Section ~\ref{TableauxforCn} we show how the row-branching truth-tables presented as decision methods for $C_n$ for $n \geq 1$ in previous sections naturally induce a tableau system for these logics, which constitutes another decision procedure for them. Finally, in Section~\ref{FinRem} we discuss some possible lines of future research.

\section{From matrices to restricted non-deterministic matrices} \label{defRNmat}

In this section, the notion of {\em restricted non-deterministic matrices}
% (or simply {\em RNmatrices}) 
will be introduced. Previous to this, some basic concepts will be recalled.

A (propositional) signature is a denumerable family $\Theta=(\Theta_n)_{n \geq 0}$ of pairwise disjoint sets; elements of $\Theta_n$ are called {\em $n$-ary connectives}.
The algebra over $\Theta$ freely generated by a denumerable set $\matV=\{p_1,p_2,\ldots\}$ of propositional variables will be denoted by ${\bF}(\Theta,\matV)$. Elements of ${\bF}(\Theta,\matV)$ are called {\em formulas} (over $\Theta$), while elements of the semigroup $Subs(\Theta, \mathcal{V})$ of endomorphisms of ${\bf F}(\Theta, \mathcal{V})$ are called {\em substitutions} (over $\Theta$).

A {\em Tarskian logic} is a pair $\mathcal{L}=(F,\vdash)$ such that  $F$ is a nonempty set and ${\vdash}\subseteq \wp(F)\times F$ (where $\wp(F)$ denotes the powerset of $F$) is a {\em consequence relation} such that:~(i) $\Gamma \vdash \varphi$ whenever $\varphi \in \Gamma$; (ii)~$\Gamma \vdash \varphi$ and $\Gamma \subseteq \Delta$ imply $\Delta \vdash \varphi$; and~(iii)~$\Gamma \vdash \varphi$, for every $\varphi \in \Delta$, and $\Delta \vdash \psi$ imply $\Gamma \vdash \psi$. A logic $\mathcal{L}$ is {\em finitary} if $\Gamma \vdash \varphi$ implies that  $\Gamma_0 \vdash \varphi$ for some finite $\Gamma_0 \subseteq \Gamma$. $\mathcal{L}=({\bF}(\Theta,\matV),\vdash)$ is {\em structural} if $\Gamma \vdash \varphi$ implies $\rho[\Gamma] \vdash \rho(\varphi)$ for every substitution $\rho$ over $\Theta$.\footnote{Along this paper the following notation will  be adopted: if $f:X \to Y$ is a function and $Z \subseteq X$ then $f[Z]$ denotes the set $\{f(x) \ : \ x \in Z\}$.} $\mathcal{L}$ is {\em standard} if it is Tarskian, finitary and structural.
%Given a tarskian logic $\mathcal{L}=(F,\vdash)$, the associated {\em consequence operator} $C:\wp(F) \to \wp(F)$ is given by $C(\Gamma)=\{\varphi \ : \ \Gamma \vdash \varphi\}$.

A {\em  logical matrix} over the signature $\Theta$ is a pair $\mathcal{M}=(\mathcal{A}, D)$ such that $\mathcal{A}$ is a $\Theta$-algebra with universe $A$ and $\emptyset \neq D \subseteq A$. A valuation over $\mathcal{M}$ is a homomorphism of $\Theta$-algebras $\nu:{\bf F}(\Theta, \mathcal{V}) \to \mathcal{A}$. The logic associated to $\mathcal{M}$ is defined as follows: $\Gamma \vDash_{\mathcal{M}}\varphi$ iff, for every valuation $\nu$, $\nu(\varphi)\in D$
whenever  $\nu[\Gamma]\subseteq D$. Given a class $\mathbb{M}$ of matrices, the logic associated to $\mathbb{M}$ is given by $\Gamma \vDash_{\mathbb{M}}\varphi$ iff $\Gamma \vDash_{\mathcal{M}}\varphi$ for every $\mathcal{M} \in \mathbb{M}$. Clearly $\vDash_{\mathbb{M}}$ is Tarskian and structural. Moreover, R. W\'ojcicki  has shown in~\cite{woj:70} that every Tarskian and  structural logic is characterized  by a class of logical matrices over its signature.

Matrix logics were generalized by B. Piochi in~ \cite{Piochi,Piochi2} through the notion of  restricted matrices.\footnote{In~ \cite{Piochi2} Piochi uses the name $\mathcal{E}$-matrix instead of restricted matrix.} A {\em restricted logical matrix} (or {\em Rmatrix}) over a signature $\Theta$ is a triple $\mathcal{M}=(\mathcal{A}, D, \mathcal{F})$ such that $(\mathcal{A}, D)$ is a logical matrix over $\Theta$ and $\mathcal{F}$ is a set of valuations over $\mathcal{M}$. If $\nu \circ \rho \in \mathcal{F}$, for every $\nu \in \mathcal{F}$ and any substitution $\rho$, then the Rmatrix is said to be {\em structural}.  We say that $\Gamma$ proves $\varphi$ according to an Rmatrix $\mathcal{M}$,  written as $\Gamma\vDash_{\mathcal{M}}^\mathsf{R}\varphi$, if $\nu(\varphi)\in D$ whenever  $\nu[\Gamma]\subseteq D$, for every valuation $\nu\in\mathcal{F}$. The consequence relation generated by a class of restricted logical matrices is defined as in the case of logical matrices.  

The class of logics generated by structural Rmatrix semantics coincides with the class of Tarskian and structural logics (\cite{Piochi,Piochi2}). However, Rmatrices are more powerful than ordinary logical matrices in the following sense: any Tarskian and structural logic is characterized by a single (but possibly infinite) structural Rmatrix.  

Finite logical matrices result in straightforward decision methods for their respective logics through truth-tables. It becomes then natural to ponder whether all Tarskian and structural logics may be indeed characterized by finite logical matrices. There are well-known negative results to this problem for  several non-classical logics: in 1932 K. G\"odel proved that propositional intuitionistic logic cannot be characterized by a single finite logical matrix (see~\cite{god:32}). His proof was adapted by J. Dugundji to prove the same kind of result for any modal logic between {\bf S1} and {\bf S5} (see~\cite{dug:40}). Uncharacterizability results by a single logical matrix were also obtained  for several \lfis\  (see for instance~\cite{avr:05b,avr:07,CCM,CC16, Avron:19}). To overcome this difficulty in the specific case of \lfis, and in order to obtain a useful decision procedure for these logics, A. Avron and I. Lev introduced in~\cite{avr:lev:01} (see also~\cite{avr:lev:05})  the notion of non-deterministic matrices (or Nmatrices). These structures generalize logical matrices by taking multialgebras (a.k.a. hyperalgebras) instead of algebras.\footnote{Previous to Avron and Lev's work, the use of non-deterministic matrices in logic was already proposed in the literature: N. Rescher's~\cite{res:62} non-deterministic implication and J. Ivlev's non-normal modal systems~\cite{ivl:73,ivl:88} constitute explicit antecedents of this notion.}

\begin{defi} \label{defNmatrix}
(1) Given a signature $\Theta$, a pair $\mathcal{A}=(A,\{\sigma_{\mathcal{A}}\}_{\sigma\in\Theta})$ is said to be a {\em $\Theta$-multialgebra with universe the set $A$} if, given $\sigma \in \Theta_n$, $\sigma_{\mathcal{A}}$ is a function $\sigma_{\mathcal{A}}:A^{n}\rightarrow\wp(A)\setminus\{\emptyset\}$; in particular, $\emptyset\neq\sigma_{\mathcal{A}}\subseteq A$ if $\sigma \in \Theta_0$.\\[1mm]
(2) An {\em homomorphism} between $\Theta$-multialgebras $\mathcal{A}=(A, \{\sigma_{\mathcal{A}}\}_{\sigma\in\Theta})$ and $\mathcal{B}=(B, \{\sigma_{\mathcal{B}}\}_{\sigma\in\Theta})$ is a function $h:A\rightarrow B$ such that, for any $\sigma$ in $\Theta$ of arity $n$ and any $a_{1}, \ldots , a_{n}\in A$,
$h[\sigma_{\mathcal{A}}(a_{1}, \ldots , a_{n})]\subseteq \sigma_{\mathcal{B}}(h(a_{1}), \ldots , h(a_{n}))$.\\
(3) A {\em non-deterministic matrix} (or a {\em Nmatrix}) over $\Theta$ is a pair $\mathcal{M}=(\mathcal{A}, D)$ such that $\mathcal{A}$ is a $\Theta$-multialgebra with universe $A$ and $\emptyset \neq D \subseteq A$. A valuation over $\mathcal{M}$ is a homomorphism of $\Theta$-multialgebras $\nu:{\bf F}(\Theta, \mathcal{V}) \to \mathcal{A}$ (where the $\Theta$-algebra ${\bf F}(\Theta, \mathcal{V})$ is considered as a $\Theta$-multialgebra). The consequence relation associated to a Nmatrix (as well as to a class of Nmatrices) is defined as in the case of logical matrices, but now by using valuations over Nmatrices.
\end{defi}

\begin{remark}
(1)  Observe that,  for every valuation $\nu$, $n$-ary connective $\sigma$ and formulas $\varphi_1,\ldots,\varphi_n$,  $\nu(\sigma(\varphi_1,\ldots,\varphi_n)) \in \sigma_\mathcal{A}(\nu(\varphi_1),\ldots,\nu(\varphi_n))$. Valuations for Nmatrices, in this presentation, have also been known as {\em legal valuations}; however, by presenting them as homomorphisms, it becomes more clear that the notion of Nmatrix semantics corresponds to an exact generalization of the notion of matrix semantics, moving from algebras to multialgebras.\\[1mm]
(2)  The category of multialgebras allows to consider Nmatrices as  simply being multialgebras, that is,  objects of the category: indeed, an Nmatrix $\mathcal{M}=(\mathcal{A}, D)$ over $\Theta$ is nothing more that a multialgebra   $\mathcal{A}^\top=(A,\{\sigma_{\mathcal{A}^\top}\}_{\sigma\in\Theta^\top})$ over the signature $\Theta^\top$ obtained from $\Theta$ by addition of a new constant $\top \in \Theta_0^\top$, such that $\sigma_{\mathcal{A}^\top}=\sigma_{\mathcal{A}}$ for every $\sigma \in \Theta$ and $\top_{\mathcal{A}^\top}=D$. This contrasts with the case of logical matrices, which in general cannot be considered as being algebras (unless the set of designated values is a singleton).
\end{remark}

\noindent
In~\cite[Theorem~11]{avr:07} Avron has shown that some \lfis\ (including da Costa's system $C_1$) cannot be characterized by a single finite Nmatrix, so establishing a Dugundji-like theorem with respect to Nmatrices. He also defines an infinite characteristic Nmatrix for each of these logics. Such infinite Nmatrices are effective, thus inducing a decision procedure for these logics (see~\cite{avr:07,Avron:19,Avron:Arieli:Zamansky:18}). However, these procedures can require the use of too many truth-values (see Example~\ref{ex3valC1}). As an alternative solution to the decidability problem of such logics,  we propose an additional generalization of Nmatrices, the {\em restricted non-deterministic matrices} (or {\em RNmatrices}), which combines Nmatrices and Rmatrices paradigms. Thus, in Section~\ref{axCL} it will be obtained a three-valued RNmatrices for $C_1$ and for a subsystem of it called \mbccl, which also lies in the scope of Avron's uncharacterizability result mentioned above. Moreover, in Section~\ref{SectCn} the three-valued characteristic RNmatrix for $C_1$ will be generalized to a $(n+2)$-valued characteristic RNmatrix for $C_n$ for $n \geq 2$. This constitutes a relatively simple decision procedure for da Costa's hierarchy through row-branching truth-tables or, alternatively, by tableaux semantics, as shown in Section~\ref{TableauxforCn}. 

\begin{defi}
A {\em restricted non-deterministic matrix}, or {\em restricted Nmatrix} or  simply an {\em RNmatrix}, over a signature $\Theta$ is a triple $\mathcal{M}=(\mathcal{A}, D, \mathcal{F})$ such that:
\begin{enumerate}
\item $(\mathcal{A}, D)$ is a non-deterministic matrix over $\Theta$;
\item $\mathcal{F}$ is a subset of the set of valuations over $(\mathcal{A}, D)$.
\end{enumerate}
An RNmatrix $\mathcal{M}=(\mathcal{A}, D, \mathcal{F})$ is  {\em structural} if  $\nu\circ\rho\in\mathcal{F}$, for every $\nu\in\mathcal{F}$ and any substitution $\rho$ over $\Theta$. The consequence relation with respect to an RNmatrix $\mathcal{M}=(\mathcal{A}, D, \mathcal{F})$, denoted by $\vDash_{\mathcal{M}}^\mathsf{RN}$, is defined as in the case of Rmatrices: $\Gamma\vDash_{\mathcal{M}}^\mathsf{RN}\varphi$ if $\nu(\varphi)\in D$ whenever  $\nu[\Gamma]\subseteq D$, for every valuation $\nu\in\mathcal{F}$. As in the case of logical matrices and Rmatrices, ${\vDash_{\mathbb{M}}^\mathsf{RN}} = \bigcap_{\mathcal{M} \in \mathbb{M}} {\vDash_{\mathcal{M}}^\mathsf{RN}}$ for any nonempty class $\mathbb{M}$ of RNmatrices.
\end{defi}

\begin{theorem}
	Given a nonempty class $\mathbb{M}$ of structural RNmatrices, $\vDash_{\mathbb{M}}^\mathsf{RN}$ is Tarskian and structural. Moreover, any Tarskian and structural logic is characterized by a single structural RNmatrix.
\end{theorem}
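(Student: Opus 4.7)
The plan is to split the claim into the two announced parts and handle each by straightforward unwrapping of definitions together with one already-cited result.

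For the first part, I would fix an RNmatrix $\mathcal{M} = (\mathcal{A}, D, \mathcal{F})$ and verify the three Tarskian clauses pointwise, then take the intersection over $\mathbb{M}$. Reflexivity is immediate: if $\varphi \in \Gamma$ and $\nu[\Gamma] \subseteq D$ then in particular $\nu(\varphi) \in D$. Monotonicity is equally direct, since enlarging $\Gamma$ to $\Delta$ only strengthens the hypothesis $\nu[\Delta] \subseteq D$. Cut follows because any $\nu \in \mathcal{F}$ with $\nu[\Gamma] \subseteq D$ must, by hypothesis, satisfy $\nu(\varphi) \in D$ for each $\varphi \in \Delta$, hence $\nu[\Delta] \subseteq D$, hence $\nu(\psi) \in D$. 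Since all three properties are preserved under intersections of consequence relations, $\vDash_{\mathbb{M}}^{\mathsf{RN}}$ is Tarskian. For structurality, assume $\Gamma \vDash_{\mathcal{M}}^{\mathsf{RN}} \varphi$ and fix a substitution $\rho$ over $\Theta$; given $\nu \in \mathcal{F}$ with $\nu[\rho[\Gamma]] \subseteq D$, the composition $\nu \circ \rho$ lies in $\mathcal{F}$ by the structurality assumption on $\mathcal{M}$, satisfies $(\nu \circ \rho)[\Gamma] \subseteq D$, and therefore $(\nu \circ \rho)(\varphi) = \nu(\rho(\varphi)) \in D$. Intersecting over $\mathbb{M}$ preserves this.

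For the second part, I would reduce the claim to Piochi's theorem cited right before the statement, namely that every Tarskian and structural logic is characterized by a single structural Rmatrix. The key observation is that every Rmatrix $(\mathcal{A}, D, \mathcal{F})$ can be canonically viewed as a structural RNmatrix: convert the $\Theta$-algebra $\mathcal{A}$ into a $\Theta$-multialgebra $\mathcal{A}^{\mathsf{m}}$ by setting $\sigma_{\mathcal{A}^{\mathsf{m}}}(a_1,\ldots,a_n) = \{\sigma_{\mathcal{A}}(a_1,\ldots,a_n)\}$ for every $\sigma \in \Theta_n$, and keep $D$ and $\mathcal{F}$ unchanged. A homomorphism $\nu : {\bF}(\Theta,\matV) \to \mathcal{A}$ of $\Theta$-algebras is, under this reading, exactly a homomorphism of $\Theta$-multialgebras $\nu : {\bF}(\Theta,\matV) \to \mathcal{A}^{\mathsf{m}}$ (the condition $\nu(\sigma(\varphi_1,\ldots,\varphi_n)) \in \sigma_{\mathcal{A}^{\mathsf{m}}}(\nu(\varphi_1),\ldots,\nu(\varphi_n))$ becomes a plain equality), so the set $\mathcal{F}$ of allowed valuations is transported without modification, the structurality condition on $\mathcal{F}$ is the same in either reading, and the consequence relations $\vDash^{\mathsf{R}}$ and $\vDash^{\mathsf{RN}}$ coincide. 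Applying Piochi's theorem to the given Tarskian and structural logic yields a structural Rmatrix characterizing it; the above construction converts it into a structural RNmatrix with the same consequence relation, which is what was to be shown.

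I do not expect genuine obstacles here: the first part is a bookkeeping exercise of the usual kind done for Nmatrices and Rmatrices, and the second part is a quotation of Piochi's theorem dressed up via the evident embedding of algebras into multialgebras. The only point requiring a bit of care is checking that this embedding really does identify the two notions of valuation and of structurality, so that the Rmatrix produced by Piochi's theorem and its multialgebraic avatar determine the same consequence; once that identification is made explicit the result is immediate.
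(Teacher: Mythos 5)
Your proposal is correct and follows exactly the route the paper takes: the paper's own proof simply states that the first part is obvious and that the second part follows from the fact that any Rmatrix is an RNmatrix (via the evident singleton embedding of algebras into multialgebras), which is precisely the two-step argument you spell out in detail. No gaps; you have merely made explicit the verifications the paper leaves to the reader.
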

\begin{proof}
	The first part is obvious. The second part follows from the fact that any Rmatrix is an RNmatrix.
\end{proof}

\noindent
However, RNmatrices are stronger than Rmatrices, in the following sense:
%However, the advent of Nmatrices on the environment of Rmatrices actually adds to Piochi's results; we have the following quite stronger result.

\begin{theorem}
Every Tarskian logic of the form $\mathcal{L}=({\bf F}(\Theta, \mathcal{V}), \vdash)$ is characterizable by a two-valued RNmatrix. If $\mathcal{L}$  is structural, so is the RNmatrix.  
\end{theorem}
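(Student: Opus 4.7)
The plan is to exploit the fact that, in an Nmatrix, we are free to make every interpretation of every connective non-deterministically trivial: this will absorb all the structure into the choice of the restriction set $\mathcal{F}$.

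First, I would define the Nmatrix $(\mathcal{A}, D)$ with universe $A=\{0,1\}$, designated set $D=\{1\}$, and with $\sigma_{\mathcal{A}}(a_1,\ldots,a_n) = \{0,1\}$ for every $n$-ary connective $\sigma \in \Theta_n$ and every tuple $(a_1,\ldots,a_n)\in A^n$ (in particular $\sigma_\mathcal{A}=\{0,1\}$ when $\sigma\in\Theta_0$). The multialgebra homomorphism condition is then vacuous, so every function $\nu : {\bf F}(\Theta,\mathcal{V}) \to \{0,1\}$ is a valuation over $(\mathcal{A}, D)$. The restriction set $\mathcal{F}$ I would choose consists of the characteristic functions $\chi_T$ of the $\vdash$-closed sets of formulas, that is, the sets $T$ such that $T\vdash\varphi$ iff $\varphi\in T$ (these are well defined because $\mathcal{L}$ is Tarskian; note in particular that ${\bf F}(\Theta,\matV)$ itself is always $\vdash$-closed, so $\mathcal{F}\neq\emptyset$).

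Next I would verify that this RNmatrix characterizes $\vdash$. For soundness, if $\Gamma\vdash\varphi$ and $\chi_T[\Gamma]\subseteq\{1\}$ for some $\vdash$-closed $T$, then $\Gamma\subseteq T$; monotonicity gives $T\vdash\varphi$, and closure gives $\varphi\in T$, hence $\chi_T(\varphi)=1$. For completeness, if $\Gamma\not\vdash\varphi$, set $T_\Gamma = \{\psi : \Gamma\vdash\psi\}$; by reflexivity $\Gamma\subseteq T_\Gamma$, and by cut (property (iii) together with (ii)) the set $T_\Gamma$ is $\vdash$-closed, so $\chi_{T_\Gamma}\in\mathcal{F}$; this witness satisfies $\chi_{T_\Gamma}[\Gamma]\subseteq\{1\}$ while $\chi_{T_\Gamma}(\varphi)=0$.

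Finally, for the structural case I would show that $\mathcal{F}$ is closed under precomposition with substitutions. For any substitution $\rho$ over $\Theta$ and any $\vdash$-closed $T$, one has $\chi_T\circ\rho = \chi_{\rho^{-1}[T]}$, so it suffices to show that $\rho^{-1}[T]$ is $\vdash$-closed. If $\rho^{-1}[T]\vdash\psi$, then structurality of $\mathcal{L}$ yields $\rho[\rho^{-1}[T]]\vdash\rho(\psi)$; since $\rho[\rho^{-1}[T]]\subseteq T$, monotonicity and closure of $T$ give $\rho(\psi)\in T$, i.e.\ $\psi\in\rho^{-1}[T]$, as required.

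There is no serious obstacle here: the entire argument is bookkeeping once one notices that a totally non-deterministic two-valued multialgebra allows arbitrary functions as valuations, so that the burden of capturing $\vdash$ can be shifted entirely onto $\mathcal{F}$ via the Lindenbaum-style family of closed theories. The only point that requires mild care is checking that $T_\Gamma$ is $\vdash$-closed using exactly the three Tarskian axioms, and that $\rho^{-1}[T]$ inherits closedness from $T$ under the structurality hypothesis.
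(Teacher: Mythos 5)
Your proof is correct and follows essentially the same route as the paper: the totally non-deterministic two-element multialgebra with $D=\{1\}$, and $\mathcal{F}$ taken to be the characteristic functions of the $\vdash$-closed sets. The paper leaves the soundness/completeness verification and the structurality check (via $\chi_T\circ\rho=\chi_{\rho^{-1}[T]}$) as ``straightforward''; you have simply supplied those details, and they are right.
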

\begin{proof}
Consider the $\Theta$-multialgebra $\textbf{2}(\Theta)$ with universe $\{0,1\}$ and all the multioperations returning the whole universe. We then define $\mathcal{F}_{\mathcal{L}}$ as the set of valuations $\nu:{\bf F}(\Theta, \mathcal{V})\rightarrow\textbf{2}(\Theta)$ such that there exists a $\mathcal{L}$-closed set of formulas $\Gamma$ over $\Theta$ for which $\nu(\gamma)=1$ iff $\gamma\in\Gamma$ (notice that every function from ${\bf F}(\Theta, \mathcal{V})$ to $\textbf{2}(\Theta)$ is a homomorphism). Consider the RNmatrix $\textbf{2}(\mathcal{L})=(\textbf{2}(\Theta), \{1\}, \mathcal{F}_{\mathcal{L}})$. It is straightforward to prove that $\Gamma\vdash\varphi$ iff $\Gamma\vDash_{\textbf{2}(\mathcal{L})}\varphi$. Clearly, if $\mathcal{L}$ is structural, so is $\textbf{2}(\mathcal{L})$. 
\end{proof}

\begin{remark} It is worth noting that the notion of RNmatrices is not new, and it was already considered in the literature. For instance, J. Kearns defines in~\cite{kear:81} four-valued  RNmatrices which characterize  some normal modal logics (see Example~\ref{Kearns} below). Avron and Konikowska have also considered the restricted version of any Nmatrix trough their {\em static semantics} (see Example~\ref{static} below).  More recently, RNmatrices were  also considered by P. Pawlowski and R. Urbaniak in the context of logics of informal provability  (see~\cite{paw:urb:18,Pawlowski}), and by H. Omori and D. Skurt, in the context of modal logics (see~\cite{OS:20}). 	
\end{remark}

\begin{example}[Kearns's semantics for modal logics] \label{Kearns}
As a way to overcome the limitations imposed by Dugundji's theorem, J. Kearns proposed in~\cite{kear:81} a four-valued Nmatrix semantics for the modal logics {\bf T},  {\bf S4}  and {\bf S5}.\footnote{Kearns and Ivlev's proposals were extended independently in~\cite{con:far:per:15,con:far:per:16} and~\cite{omo:sku:16}.} However, not all the valuations over such Nmatrices must be considered, but a special subset of them.  This set of valuations can be recast as follows:\footnote{Here, we are following the presentation of Kearns's approach given in~\cite{con:far:per:15}.} let 
$\{T,t,f,F\}$ be the domain of the Nmatrix $\mathcal{M}_{\bf L}$ proposed by Kearns for the modal system ${\bf L} \in \{{\bf T},  {\bf S4}, {\bf S5}\}$,  where $D = \{T,t\}$ is the set of designated values, and let $\val^{\bf L}$ be the set of valuations over $\mathcal{M}_{\bf L}$ (recall Definition~\ref{defNmatrix}(3)). Let $\val_k^{\bf L} \subseteq \val^{\bf L}$ be defined as follows: $\val_0^{\bf L}=\val^{\bf L}$ and, for every $k \geq 0$, 
$$\val_{k+1}^{\bf L}= \{\nu \in  \val_k^{\bf L} \ :  \ \mbox{for every
	formula $\alpha$, $\val_k^{\bf L}(\alpha) \subseteq D$ implies $\nu(\alpha)=
	T$} \}$$ 
where $\val_k^{\bf L}(\alpha)= \{ \nu(\alpha) \ : \ \nu \in
\val_k^{\bf L}\}$, for every $k$ and $\alpha$. Finally, the set of {\bf L}-valuations is given by $\mathcal{F}_{\bf L} =
\bigcap_{k \geq 0} \val_k^{\bf L}$.  Then, $\vdash_{\bf L} \alpha$ iff $\nu(\alpha)=T$ for every $\nu \in \mathcal{F}_{\bf L}$.  Clearly, Kearns's semantics for {\bf L} corresponds to the semantics given by the RNmatrix $\mathcal{K}_{\bf L}=(\mathcal{A}_{\bf L},\{T\},\mathcal{F}_{\bf L})$, where $\mathcal{A}_{\bf L}$ is the multialgebra underlying the Nmatrix $\mathcal{M}_{\bf L}$.
%  His proposal was reworked and extended in
%\cite{con:far:per:15,con:far:per:16,omo:sku:16}. 
Moreover, it is easy to see that $\mathcal{K}_{\bf L}$ is structural. Indeed, by induction on $k$ it can be proved that, for every valuation $\nu$ and every substitution $\rho$, $\nu \circ \rho \in \val_k^{\bf L}$ whenever $\nu \in \val_k^{\bf L}$. The case $k=0$ is clearly true. Suppose that, for every valuation $\nu$ and every substitution $\rho$, $\nu \circ \rho \in \val_{k}^{\bf L}$ whenever $\nu \in \val_{k}^{\bf L}$ (IH). Let $\nu \in \val_{k+1}^{\bf L}$, and let $\rho$ be a substitution. By definition, $\nu \in \val_{k}^{\bf L}$ and so $\nu \circ \rho \in \val_{k}^{\bf L}$, by (IH). Let $\alpha$ be a formula such that $\val_k^{\bf L}(\alpha) \subseteq D$. If $\nu' \in \val_{k}^{\bf L}$ then $\nu' \circ \rho \in \val_{k}^{\bf L}$, by (IH), hence $\nu' (\rho(\alpha))=\nu' \circ \rho (\alpha) \in D$; that is, $\val_k^{\bf L}(\rho(\alpha)) \subseteq D$. This implies that $\nu \circ \rho (\alpha)=\nu(\rho(\alpha)) =T$, since $\nu \in \val_{k+1}^{\bf L}$. From this,  $\nu \circ \rho \in \val_{k+1}^{\bf L}$. This proves that  $\nu \circ \rho \in \val_k^{\bf L}$ whenever $\nu \in \val_k^{\bf L}$, for every $k\geq 0$. Finally, let $\nu \in \mathcal{F}_{\bf L}$ and let $\rho$ be a substitution. Given $k \geq 0$, $\nu \in \val_k^{\bf L}$ and so $\nu \circ \rho \in \val_k^{\bf L}$. This means that $\nu \circ \rho \in \mathcal{F}_{\bf L}$, that is, $\mathcal{K}_{\bf L}$ is structural.
\end{example}

\begin{example} [Static semantics for Nmatrices] \label{static}
The  valuations over Nmatrices considered by Avron and Lev produce what is called {\em dynamic semantics} over Nmatrices. Avron and Konikowska (\cite{AK:05}) have also considered a restriction of the usual valuations, the {\em static semantics}. Given an Nmatrix $\mathcal{M}$, its static semantics is given by the set $\mathcal{F}^s_\mathcal{M}$ of valuations $\nu$ over $\mathcal{M}$ such that, for all formulas $\sigma(\varphi_1,\ldots,\varphi_n)$ and $\sigma(\psi_1,\ldots,\psi_n)$ (where $\sigma$ is an $n$-ary connective), $\nu(\sigma(\varphi_1,\ldots,\varphi_n))=\nu(\sigma(\psi_1,\ldots,\psi_n))$ provided that $\nu(\varphi_i)=\nu(\psi_i)$ for every $1 \leq i \leq n$. Clearly, $(\mathcal{A},D,\mathcal{F}^s_\mathcal{M})$ is a structural RNmatrix for every Nmatrix $\mathcal{M}=(\mathcal{A},D)$.
\end{example}

\begin{example}[PNmatrices]\label{PNmatrices}
PNmatrices, first defined in \cite{Baaz:13}, also generalize Nmatrices by allowing for partial multialgebras instead of simply multialgebras. We will use in this example, however, the definition to be found in \cite{CM:19}: given a signature $\Theta$, a $\Theta$-partial multialgebra is a pair $\mathcal{A}=(A, \{\sigma_{\mathcal{A}}\}_{\sigma\in\Theta})$ such that, if $\sigma\in\Theta_{n}$, $\sigma_{\mathcal{A}}$ is a function from $A^{n}$ to $\wp(A)$; a valuation for $\mathcal{A}$ is then a map $\nu: {\bf F}(\Theta, \mathcal{V})\rightarrow A$ such that, for $\sigma\in\Theta$ of arity $n$ and $\alpha_{1}, \ldots  , \alpha_{n}\in {\bf F}(\Theta, \mathcal{V})$, $\nu(\sigma(\alpha_{1}, \ldots  , \alpha_{n}))\in \sigma_{\mathcal{A}}(\nu(\alpha_{1}), \ldots  , \nu(\alpha_{n}))$;\footnote{Observe that this presupposes that the set $\sigma_{\mathcal{A}}(\nu(\alpha_{1}), \ldots  , \nu(\alpha_{n}))$ is nonempty; otherwise, there is no such valuation, given that they are assumed to be total functions.} finally, for a pair $\mathcal{M}=(\mathcal{A}, D)$, with $\mathcal{A}$ a $\Theta$-partial multialgebra and $D$ a subset of the universe of $\mathcal{A}$, $\Gamma\vDash_{\mathcal{M}}\varphi$ iff, for every valuation $\nu$ for $\mathcal{A}$, $\nu[\Gamma]\subseteq D$ implies $\nu(\varphi)\in D$. Now, consider the $\Theta$-multialgebra $\mathcal{A}^{\emptyset}=(A\cup\{o\}, \{\sigma_{\mathcal{A}^{\emptyset}}\}_{\sigma\in\Theta})$, where we assume $o\notin A$, such that $\sigma_{\mathcal{A}^{\emptyset}}(a_{1}, \ldots  , a_{n})$ equals $\sigma_{\mathcal{A}}(a_{1}, \ldots  , a_{n})$, if the last set is not empty and $a_{1}, \ldots  , a_{n}\in A$, and $\{o\}$ otherwise. Then, the set of valuations for $\mathcal{A}$ is the set of homomorphisms from ${\bf F}(\Theta, \mathcal{V})$ to $\mathcal{A}^{\emptyset}$ which do not have $o$ in their range. Defining $\mathcal{M}^{\emptyset}=(\mathcal{A}^{\emptyset}, D, \mathcal{F})$, for 
\[\mathcal{F}=\{\nu:{\bf F}(\Theta, \mathcal{V})\rightarrow\mathcal{A}^{\emptyset}\ : \ \nexists\alpha\in {\bf F}(\Theta, \mathcal{V})\text{ such that }\nu(\alpha)=o\},\]
we obtain that $\mathcal{M}^{\emptyset}$ induces precisely the same deductive operator as $\mathcal{M}$.
\end{example}

\noindent
In Subsection~\ref{FCn}  another historical antecedent of RNmatrices  will be discussed: the Fidel structures semantics.

\section{da Costa's Calculi $C_n$, and other \lfis} \label{Cn}
 
 In this section it will be recalled the da Costa's hierarchy of paraconsistent calculi $C_n$, as well as some \lfis\ which are relevant to our discussion. 

Let $\Sigma$ be the propositional signature for the calculi $C_n$ such that $\Sigma_1=\{\neg\}$, $\Sigma_2=\{\land,\lor,\to\}$, and $\Sigma_k=\emptyset$ otherwise.

Consider the following abbreviations in ${\bF}(\Sigma,\matV)$: $\alpha^0 := \alpha$ and $\alpha^{n+1}:=\neg(\alpha^n \land \neg(\alpha^n))$ for every $0 \leq n < \omega$. On the other hand, $\alpha^{(0)} := \alpha$, $\alpha^{(1)}:=\alpha^1$ and $\alpha^{(n+1)}:=\alpha^{(n)} \land \alpha^{n+1}$ for every $1 \leq n < \omega$. The formula $\alpha^1=\neg(\alpha \land \neg\alpha)$ will be also denoted by $\alpha^\circ$. Accordingly, $\alpha^k$ can be alternatively denoted by $\alpha^{\circ \cdots \circ}$, where $\circ \cdots \circ$ denotes a sequence of $k$ iterations of $\circ$, for $k \geq 2$.

\begin{defi} [The calculi $C_n$, for $n \geq 1$] \label{hilCn} Let $n \geq 1$. The logic $C_n$ is defined over the signature $\Sigma$ by the following Hilbert  calculus:\\[2mm]
	{\bf Axiom schemata:}\vspace*{-5mm}
	\begin{gather}
	\alpha \imp \big(\beta \imp \alpha\big)             \tag{\kax} \\
	\Big(\alpha\imp\big(\beta\imp\gamma\big)\Big) \imp
	\Big(\big(\alpha\imp\beta\big)\imp\big(\alpha\imp\gamma\big)\Big)
	\tag{\axTrans}\\
	\alpha \imp \Big(\beta \imp \big(\alpha \land \beta\big)
	\Big)  \tag{\axed}\\
	\big(\alpha \land \beta\big) \imp \alpha         \tag{\axeea}\\	
	\big(\alpha \land \beta\big) \imp \beta          \tag{\axeeb}\\
	\alpha \imp \big(\alpha \lor \beta\big)          \tag{\axouda}\\
	\beta \imp \big(\alpha \lor \beta\big)           \tag{\axoudb}\\
	\Big(\alpha \imp \gamma\Big) \imp \Big(
	(\beta \imp \gamma) \imp
	\big(
	(\alpha \lor \beta) \imp \gamma
	\big)\Big)                               \tag{\axoue}\\
	\alpha \lor \lnot \alpha                        \tag{\axtnd}\\
	\neg\neg \alpha \imp \alpha
	\tag{\axcf}\\
	\alpha^{(n)} \imp \Big(\alpha \imp \big(\lnot \alpha \imp \beta\big)\Big)
	\tag{\axexp$_n$}\\
	(\alpha^{(n)} \land \beta^{(n)}) \imp \big((\alpha \land \beta)^{(n)} \land (\alpha \lor \beta)^{(n)} \land (\alpha \to \beta)^{(n)}\big)
	\tag{\axp$_n$}
	\end{gather}	
	{\bf Inference rule:}
	\[\frac{\alpha \ \ \ \ \alpha\imp
		\beta}{\beta}  \tag{\MP}\]
\end{defi}

\begin{remark} \label{remCn}
The original presentation of da Costa (see~\cite{dC63}) considers, instead of axiom (\axexp$_n$), the following one:
\begin{gather}
\alpha^{(n)} \imp \big((\beta \to \alpha) \to ((\beta \to \neg\alpha) \to \neg\beta)\big)
\tag{\axdc$_n$}
\end{gather}
It is easy to show the equivalence of both presentations of $C_n$. In adition, it is well-known that the Dummett law $\alpha \vee (\alpha \imp \beta)$ is derivable in every $C_n$.
\end{remark}

\begin{defi} \label{bival-def}
A {\em bivaluation} for $C_n$ (or a {\em $C_n$-bivaluation}) is a function $\mathsf{b}:{\bF}(\Sigma,\matV) \to {\bf 2}$  (where ${\bf 2}:=\{0,1\}$) satisfying the following clauses:\\[2mm]
$\begin{array}{ll}
(B1) & \mathsf{b}(\alpha \land \beta)=1  \ \ \mbox{ iff } \ \  \mathsf{b}(\alpha)=1 \ \mbox{ and } \ \mathsf{b}(\beta)=1;\\[1mm]
(B2) & \mathsf{b}(\alpha \lor \beta)=1  \ \ \mbox{ iff } \ \  \mathsf{b}(\alpha)=1 \ \mbox{ or } \ \mathsf{b}(\beta)=1;\\[1mm]
(B3) & \mathsf{b}(\alpha \to \beta)=1  \ \ \mbox{ iff } \ \  \mathsf{b}(\alpha)=0 \ \mbox{ or } \ \mathsf{b}(\beta)=1;\\[1mm]
(B4) & \mathsf{b}(\alpha)=0  \ \ \mbox{ implies that } \ \  \mathsf{b}(\neg\alpha)=1;\\[1mm]
(B5) & \mathsf{b}(\neg\neg\alpha)=1  \ \ \mbox{ implies that } \ \  \mathsf{b}(\alpha)=1;\\[1mm]
(B6)_n & \mathsf{b}(\alpha^{n-1})=\mathsf{b}(\neg (\alpha^{n-1}))  \ \ \mbox{ iff } \ \  \mathsf{b}(\alpha^n)=0;\\[1mm]
(B7) & \mathsf{b}(\alpha)=\mathsf{b}(\neg \alpha)  \ \ \mbox{ iff } \ \  \mathsf{b}(\neg(\alpha^\circ))=1;\\[1mm]
(B8) & \mathsf{b}(\alpha)\neq \mathsf{b}(\neg \alpha) \ \mbox{ and } \  \mathsf{b}(\beta)\neq \mathsf{b}(\neg \beta) \ \ \mbox{ implies that }\ \ \\
& \mathsf{b}(\alpha\#\beta)\neq \mathsf{b}(\neg (\alpha\#\beta)), \ \ \mbox{for } \# \in \{\land,\lor,\to\}.\\[1mm]
\end{array}$
\end{defi}
 
\noindent The semantical consequence relation  w.r.t. bivaluations for $C_n$ will be denoted by $\vDash_{n}^{\bf 2}$. Namely, $\Gamma \vDash_{n}^{\bf 2} \varphi$ iff $\mathsf{b}(\varphi)=1$, for any $C_n$-bivaluation $\mathsf{b}$ such that $\mathsf{b}[\Gamma]\subseteq\{1\}$. Although bivaluations for da Costa's hierarchy were first proposed in~\cite{dCA:77}, the first correct proof that the method indeed worked appeared in 1980, presented by Lopari\'c and Alves~\cite{lop:alv:80}.   
 
\begin{theorem} [Soundness and completeness of $C_n$ w.r.t. bivaluations, \cite{lop:alv:80}] \label{comple-Cn-biv} 
Fix $n \geq 1$, and let $\Gamma \cup \{\varphi\} \subseteq {\bF}(\Sigma,\matV)$. Then: $\Gamma \vdash_{C_n} \varphi$ \ iff \ $\Gamma \vDash_{n}^{\bf 2} \varphi$. 
\end{theorem}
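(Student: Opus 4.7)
The plan is to prove soundness and completeness separately via a Lindenbaum--Asser argument. Soundness proceeds by induction on the length of a $C_n$-derivation: one verifies that every $C_n$-bivaluation $\mathsf{b}$ sends each axiom to $1$ and that modus ponens preserves value $1$ via clause (B3). Axioms (Ax1)--(Ax8) are handled by the classical clauses (B1)--(B3), (Ax9) by (B4), (Ax10) by (B5); (bc$_n$) reduces to (B6)$_n$ (iterated with (B7) to propagate the value-$0$ assumption down from $\alpha^n$ to $\alpha$, so that one of $\alpha, \neg\alpha$ must receive $0$ whenever $\alpha^{(n)}$ receives $1$); and (P$_n$) is essentially a syntactic reformulation of (B8).

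For completeness, suppose $\Gamma \not\vdash_{C_n} \varphi$. First I would establish the deduction theorem for $C_n$, which holds because the logic contains (Ax1), (Ax2), and modus ponens. Using this together with the derivability of the Dummett law $\alpha \lor (\alpha \to \beta)$ noted in Remark~\ref{remCn}, extend $\Gamma$ by a standard Lindenbaum construction (Zorn's lemma, or an enumeration of formulas since the language is denumerable) to a set $\Delta \supseteq \Gamma$ that is maximal subject to $\Delta \not\vdash_{C_n} \varphi$. The set $\Delta$ is then deductively closed and enjoys the disjunction property ($\psi \lor \chi \in \Delta$ iff $\psi \in \Delta$ or $\chi \in \Delta$) as well as the implication property ($\psi \to \chi \in \Delta$ iff $\psi \notin \Delta$ or $\chi \in \Delta$); both are routine consequences of the deduction theorem and the Dummett law.

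Define $\mathsf{b}: {\bF}(\Sigma,\matV) \to {\bf 2}$ by $\mathsf{b}(\psi) = 1$ iff $\psi \in \Delta$, and verify that $\mathsf{b}$ is a $C_n$-bivaluation. Clauses (B1)--(B3) follow from the lattice properties of $\Delta$; (B4) from (Ax9) combined with the disjunction property; (B5) from (Ax10). The substantive work lies in (B6)$_n$, (B7), and (B8). The easy directions of (B6)$_n$ and (B7) follow from (Ax9), the disjunction property, and (Ax10): for instance, if $\neg\alpha^\circ \in \Delta$ then by (Ax10), $\alpha \land \neg\alpha \in \Delta$, hence $\alpha, \neg\alpha \in \Delta$. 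The hard directions require deriving from (bc$_n$), via substitution and the definitional expansion of the iterated formulas $\alpha^k = \neg(\alpha^{k-1} \land \neg\alpha^{k-1})$, suitable one-step explosion theorems that allow one to argue: if the relevant consistency formula were in $\Delta$ alongside the contradictory pair, then $\Delta$ would prove $\varphi$. Clause (B8) is then extracted from (P$_n$), using the previous clauses to translate ``$\mathsf{b}(\alpha) \neq \mathsf{b}(\neg\alpha)$'' into a statement about membership in $\Delta$ of the consistency formula for $\alpha$. Finally $\Gamma \subseteq \Delta$ and $\varphi \notin \Delta$ give $\mathsf{b}[\Gamma] \subseteq \{1\}$ with $\mathsf{b}(\varphi) = 0$, refuting $\Gamma \vDash_n^{\bf 2} \varphi$.

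The main obstacle is the hard directions of (B6)$_n$ and (B7), because (bc$_n$) supplies explosion only from the full premise $\alpha^{(n)} = \alpha^1 \land \cdots \land \alpha^n$, not from the individual $\alpha^k$. For $n \geq 2$ the naive attempt to instantiate (bc$_n$) at the formula $\alpha^{n-1}$ requires knowing that $(\alpha^{n-1})^{(n)} \in \Delta$, which is not immediate from the bivaluation clauses alone. The Lopari\'c--Alves proof circumvents this by deriving in $C_n$ specific lemmas about the interaction of $\alpha^k$ and $\neg\alpha^k$, in particular double-negation introduction for formulas of the shape $\alpha \land \neg\alpha$ when enough consistency information is available; marshaling these syntactic derivations and tracking exactly which instances of (bc$_n$) and (P$_n$) suffice at each level of the hierarchy is the combinatorial core of the argument.
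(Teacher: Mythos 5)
The paper does not actually prove this theorem: it is imported wholesale from Lopari\'c--Alves \cite{lop:alv:80}, so there is no internal proof to compare against. Your outline is nevertheless the standard architecture of that proof (soundness by induction on derivations; completeness by extending $\Gamma$ to a set $\Delta$ maximal with respect to not proving $\varphi$ and reading off a bivaluation from membership in $\Delta$), and the soundness half is essentially complete as you describe it --- the verification of (\axexp$_n$) by iterating $(B7)$ up the tower $\alpha, \alpha^1, \ldots, \alpha^{n-1}$ and closing with $(B6)_n$, and of (\axp$_n$) via $(B8)$ together with $(B4)$ and $(B7)$, both go through.

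The completeness half, however, has a genuine gap exactly where you flag it, and flagging it is not the same as filling it. Consider the hard direction of $(B7)$: from $\alpha, \neg\alpha \in \Delta$ you must conclude $\neg\neg(\alpha\wedge\neg\alpha) \in \Delta$. Your only stated tool is a reductio --- assume $\neg(\alpha^1) \notin \Delta$, get $\alpha^1 \in \Delta$ by (\axtnd) and the disjunction property, and explode. But for $n \geq 2$ the set $\{\alpha, \neg\alpha, \alpha^1\}$ is \emph{not} trivial in $C_n$ (the paper's own snapshot $t^n_1$, with first three coordinates $(1,1,1)$, witnesses a model of it), so no amount of clever instantiation of (\axexp$_n$) can make this reductio work; moreover $\Delta$ may well contain both $\alpha^1$ and $\neg(\alpha^1)$, so the dichotomy itself gives nothing. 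What is actually needed is a \emph{positive} derived theorem of $C_n$, roughly $\vdash_{C_n} (\alpha\wedge\neg\alpha) \to \neg\neg(\alpha\wedge\neg\alpha)$, together with its relatives at each level $\alpha^k$ that drive the hard directions of $(B6)_n$ and, through them, $(B8)$. These syntactic lemmas are the mathematical content of the Lopari\'c--Alves result; your proposal names their existence but neither states them precisely nor derives them, so as written the completeness direction is a plan rather than a proof.
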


\begin{remark} \label{bival-RN}  It should be noticed that bivaluation  semantics corrresponds to a  structural RNmatrix. Indeed, consider the Nmatrix over $\Sigma$ with domain $\{0,1\}$, deterministic operations for $\vee$, $\wedge$ and $\to$ (the classical truth-tables), the multioperation $\tilde{\neg}(0)=\{1\}$ and $\tilde{\neg}(1)=\{0,1\}$ and $1$ as the only designated value.Then, the set $\mathcal{F}$ of bivaluations is a set of valuations over such Nmatrix such that $\nu\circ\rho\in\mathcal{F}$, for every $\nu\in\mathcal{F}$ and any substitution $\rho$ over $\Sigma$. Thus, bivaluations introduced in~\cite{dCA:77} and~\cite{lop:alv:80}, together with the induced {\em quasi-matrices}, constitute one of the early examples of RNmatrix semantics.
\end{remark}

As discussed in the Introduction,  da Costa's approach to paraconsistency was generalized through the notion of \lfis. An interesting class of them is defined over the signature $\Sigma^{\circ}$ obtained from $\Sigma$ by adding an additional unary connective $\circ$ to express `consistency' in the sense of respecting the explosion law. That is, every formula follows from $\{\alpha, \neg\alpha, \circ \alpha\}$, despite $\{\alpha, \neg\alpha\}$ being not necessarily trivial. The basic \lfi\ studied in~\cite{CCM} is \mbC, obtained from the axiom schemata $\textbf{Ax1}$ trough $\textbf{Ax9}$ for $C_n$ by addition of the Dummett law $\alpha \vee (\alpha \imp \beta)$ (recall Remark~\ref{remCn}) and 
\[\tag{\textbf{bc1}}\circ\alpha\rightarrow(\alpha\rightarrow(\neg \alpha\rightarrow \beta)),\]
together with  Modus Ponens as the only inference rule. The logics $\mbcci$ and $\mbccl$ were also considered in~\cite{CC16}, being obtained from \mbc\ by addition, respectively, of the axiom schema $\textbf{ci}$: $\neg\cons\alpha\rightarrow(\alpha\wedge\neg \alpha)$ and $\textbf{cl}$: $\neg(\alpha\wedge\neg \alpha)\rightarrow\circ\alpha$.\footnote{These logics were already considered in~\cite{avr:07} under the names of  {\bf Bi} and  {\bf Bl}, respectively. The former was originally presented in~\cite{avr:05} by means of a sequent calculus called {\bf B}[\{{\bf i1},{\bf i2}\}].} The logic $\CILA$, proposed in~\cite{CM}, is obtained from $\mbccl$ by addition of the axiom schemata $\textbf{ci}$, $\textbf{cf}$: $\neg\neg\alpha\rightarrow\alpha$ and 
\[\tag{$\textbf{ca}_{\#}$}(\circ\alpha\wedge\circ\beta)\rightarrow\circ(\alpha\#\beta),\quad\text{for}\quad\#\in\{\vee, \wedge, \rightarrow\}.\]
It is a well-known result (\cite{CM}) that $\CILA$ and $C_1$ are equivalent systems.

\begin{defi}
A {\em bivaluation} for $\mbC$ is a function $\mathsf{b}:{\bF}(\Sigma^{\circ},\matV) \to {\bf 2}$ satisfying clauses $(B1)$ trough $(B4)$ from Definition~\ref{bival-def}, plus:\\[1mm]
$\begin{array}{ll}
(B^\prime1) & \mathsf{b}(\circ\alpha)=1  \ \ \mbox{ implies that } \ \  \mathsf{b}(\alpha)=0 \ \mbox{ or } \ \mathsf{b}(\neg\alpha)=0.\\[1mm]
\end{array}$\\
A bivaluation for $\mbcci$ or $\mbccl$ is a bivaluation for $\mbC$ satisfying additionally that, respectively,\\[1mm]
$\begin{array}{ll}
(B^\prime2) & \mathsf{b}(\neg\cons\alpha)=1  \ \ \mbox{ implies that } \ \  \mathsf{b}(\alpha)=1 \ \mbox{ and } \ \mathsf{b}(\neg\alpha)=1;\\[1mm]
(B^\prime3) & \mathsf{b}(\neg(\alpha\wedge\neg\alpha))=1  \ \ \mbox{ implies that } \ \  \mathsf{b}(\circ\alpha)=1.\\[1mm]
\end{array}$\\
A bivaluation for $\CILA$ is a bivaluation for both $\mbcci$ and $\mbccl$, satisfying additionally conditions $(B5)$ and $(B8)$  from Definition~\ref{bival-def}.
\end{defi}

\noindent
It is a well-known fact, of which one can find a proof in \cite{CC16}, that their respective bivaluations can characterize each of the logics $\mbC$, $\mbcci$, $\mbccl$ and $\CILA$; we will denote the semantical consequence relation, with respect to bivaluations, for any logic $\mathcal{L}$ among these four, by $\vDash^{\textbf{2}}_{\mathcal{L}}$. As observed in Remark~\ref{bival-RN} for $C_1$, they correspond to RNmatrices over $\{0,1\}$.

\section{A solution to a Dugundji-like theorem w.r.t. Nmatrices} \label{axCL}

Recall from Section~\ref{defRNmat} that Avron obtained in~\cite[Theorem~11]{avr:07} a Dugundji-like theorem w.r.t. Nmatrices for some \lfis, including \mbccl\ and \cila\ (and so for da Costa's $C_1$). That is, none of these logics can be characterized by a single finite Nmatrix. The problem arises specifically from axiom $\cl$.
% This is an important uncharacterizability theorem, that limits considerably the expressive power of Nmatrices. 
In this section we present three-valued  RNmatrices which characterize \mbccl\ and \cila, showing that RNmatrices improve drastically the expressive power of Nmatrices in this specific sense and allow one to define simple and elegant decision procedures for several logics which cannot be characterized by means of a finite Nmatrix.

\subsection{The case of \mbCcl}

Consider the $\Sigma^{\circ}$-multialgebra $\mathcal{A}_{\mbccl}$ with universe $\{F, t, T\}$ and multioperations given by the tables below, where $D=\{t, T\}$ and $U=\{F\}$.
\begin{figure}[H]
\centering
\begin{minipage}[t]{4.5cm}
\centering
\begin{tabular}{|l|c|c|r|}
\hline
$\tilde{\vee}$ & $F$ & $t$ & $T$ \\ \hline
$F$ & $U$ & $D$ & $D$ \\ \hline
$t$ & $D$ & $D$ & $D$ \\ \hline
$T$ & $D$ & $D$ & $D$ \\ \hline
\end{tabular}
\end{minipage}
\centering
\begin{minipage}[t]{4.5cm}
\centering
\begin{tabular}{|l|c|c|r|}
\hline
$\tilde{\wedge}$ & $F$ & $t$ & $T$ \\ \hline
$F$ & $U$ & $U$ & $U$ \\ \hline
$t$ & $U$ & $D$ & $D$ \\ \hline
$T$ & $U$ & $D$ & $D$ \\ \hline
\end{tabular}
\end{minipage}
\end{figure}
\vspace*{-5mm}
\begin{figure}[H]
\centering
\begin{minipage}[t]{3cm}
\centering
\begin{tabular}{|l|r|}
\hline
 & $\tilde{\neg}$ \\ \hline
$F$ & $D$\\ \hline
$t$ & $D$\\ \hline
$T$ & $U$ \\ \hline
\end{tabular}
\end{minipage}
\begin{minipage}[t]{5cm}
\centering
\begin{tabular}{|l|c|c|r|}
\hline
$\tilde{\rightarrow}$ & $F$ & $t$ & $T$ \\ \hline
$F$ & $D$ & $D$ & $D$ \\ \hline
$t$ & $U$ & $D$ & $D$ \\ \hline
$T$ & $U$ & $D$ & $D$ \\ \hline
\end{tabular}
\end{minipage}
\begin{minipage}[t]{3cm}
\centering
\begin{tabular}{|l|r|}
\hline
 & $\tilde{\circ}$ \\ \hline
$F$ & $D$\\ \hline
$t$ & $U$\\ \hline
$T$ & $D$ \\ \hline
\end{tabular}
\end{minipage}
\end{figure}

\noindent
Now, let  $\mathcal{M}_{\mbCcl}=(\mathcal{A}_{\mbccl}, D, \mathcal{F}_{\mbCcl})$ be the restricted Nmatrix such that $\mathcal{F}_{\mbCcl}$ is the set of homomorphisms $\nu:{\bf F}(\Sigma^{\circ},\mathcal{V})\rightarrow\mathcal{A}_{\mbCcl}$ satisfying that, if $\nu(\alpha)=t$, then $\nu(\alpha\wedge\neg\alpha)=T$.
It is clear that this RNmatrix is structural, and not difficult to prove that $\mathcal{M}_{\mbCcl}$ models the axiom schemata and inference rule of $\mbCcl$. The following theorem is proved by induction on the length of a derivation.

\begin{theorem} [Soundness of \mbccl\ w.r.t. $\mathcal{M}_{\mbCcl}$]
Let $\Gamma\cup\{\varphi\}$ be a set of formulas of $\mbCcl$. If $\Gamma\vdash_{\mbCcl}\varphi$ then $\Gamma\vDash_{\mathcal{M}_{\mbCcl}}^\mathsf{RN}\varphi$.
\end{theorem}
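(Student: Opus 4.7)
The plan is the standard induction on the length of a derivation of $\varphi$ from $\Gamma$ in $\mbCcl$. I fix an arbitrary $\nu \in \mathcal{F}_{\mbCcl}$ with $\nu[\Gamma] \subseteq D$; the goal is to show $\nu(\varphi) \in D$. The base cases are when $\varphi \in \Gamma$ (trivial) and when $\varphi$ is a substitution instance of an axiom schema, while the inductive step handles the sole inference rule, Modus Ponens.

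A useful preliminary observation is that each of the multioperations $\tilde{\vee}, \tilde{\wedge}, \tilde{\to}$ returns exactly one of the two sets $D$ or $U$, and that under the collapse $D \mapsto 1$, $U \mapsto 0$ the resulting two-valued truth-functions coincide with the classical ones. Hence every classical tautology built from $\vee, \wedge, \to$ is automatically designated under every valuation, which disposes in one stroke of \kax\ through \axoue\ as well as of the Dummett law $\alpha \vee (\alpha \to \beta)$. For \axtnd, a case split on $\nu(\alpha)$ suffices: if $\nu(\alpha) = F$ then $\tilde{\neg}(F) = D$ forces $\nu(\neg\alpha) \in D$, while if $\nu(\alpha) \in D$ the $\tilde{\vee}$ table returns $D$ regardless of the second input. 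Axiom \textbf{bc1} is handled similarly: whenever $\nu(\circ\alpha) \in D$ the operation $\tilde{\circ}$ compels $\nu(\alpha) \in \{F, T\}$, and then either $\nu(\alpha) = F$ trivialises the second antecedent, or $\nu(\alpha) = T$ makes $\nu(\neg\alpha) = F$ and trivialises the innermost one; in both cases the full implication lands in $D$.

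The truly interesting case, and what I expect to be the main obstacle, is the validation of \textbf{cl}: $\neg(\alpha \wedge \neg\alpha) \to \circ\alpha$, which is precisely where the restriction defining $\mathcal{F}_{\mbCcl}$ does its work. I would argue contrapositively: if $\nu(\circ\alpha) = F$, inspection of $\tilde{\circ}$ shows that necessarily $\nu(\alpha) = t$; but then the defining condition of $\mathcal{F}_{\mbCcl}$ imposes $\nu(\alpha \wedge \neg\alpha) = T$, so $\nu(\neg(\alpha \wedge \neg\alpha)) \in \tilde{\neg}(T) = U$, and the implication evaluates to $F \tilde{\to} F = D$. Without this restriction, a plain homomorphism could pick $\nu(\alpha \wedge \neg\alpha) = t$, in which case $\nu(\neg(\alpha \wedge \neg\alpha))$ could still be designated, blocking the argument; this is exactly why an unrestricted finite Nmatrix cannot characterise $\mbCcl$. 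Finally, the Modus Ponens step is immediate from the $\tilde{\to}$ table: the only entries on rows $t$ and $T$ that lie in $U$ are those in column $F$, so $\nu(\alpha), \nu(\alpha \to \beta) \in D$ forces $\nu(\beta) \in \{t, T\} = D$, closing the induction.
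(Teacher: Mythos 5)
Your proposal is correct and follows essentially the same route as the paper, which merely asserts that $\mathcal{M}_{\mbCcl}$ models the axiom schemata and Modus Ponens and then invokes induction on the length of a derivation. You supply the details the paper omits — the collapse of the positive-fragment tables onto the classical two-valued ones for {\bf Ax1}--{\bf Ax8} and Dummett's law, the case analyses for {\bf Ax9} and {\bf bc1}, and, crucially, the use of the defining restriction of $\mathcal{F}_{\mbCcl}$ to validate {\bf cl} — and all of these checks are accurate.
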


\noindent
Now, we wish to show completeness, that is: $\Gamma\vDash_{\mathcal{M}_{\mbCcl}}^\mathsf{RN}\varphi$ implies that $\Gamma\vdash_{\mbCcl}\varphi$. It will be shown that, given a bivaluation $\mathsf{b}$ for $\mbCcl$, there exists a valuation $\nu$ which lies in $\mathcal{F}_{\mbCcl}$ such that $\mathsf{b}(\alpha)=1$ if and only if $\nu(\alpha)\in D$. From this, completeness is proved as follows: assuming that $\Gamma\vDash_{\mathcal{M}_{\mbCcl}}^\mathsf{RN}\varphi$, let $\mathsf{b}$ be a bivaluation such that $\mathsf{b}[\Gamma]\subseteq\{1\}$. Then, the valuation $\nu$ obtained from $\mathsf{b}$ is such that $\nu[\Gamma]\subseteq D$, and therefore $\nu(\varphi)\in D$. Hence $\mathsf{b}(\varphi)=1$, proving that $\Gamma\vDash^{\textbf{2}}_{\mbCcl}\varphi$; the latter  implies that $\Gamma\vdash_{\mbCcl}\varphi$, by completeness of \mbccl\ w.r.t. bivaluations. 

So, given a bivaluation  $\mathsf{b}$ for $\mbCcl$, consider the map $\nu:{\bf F}(\Sigma^\circ,\mathcal{V})\rightarrow\{F,t,T\}$ such that:\\
1. $\nu(\alpha)=F \ \Leftrightarrow \ \mathsf{b}(\alpha)=0$ (and so $\mathsf{b}(\neg\alpha)=1$);\\
2. $\nu(\alpha)=t \ \Leftrightarrow \ \mathsf{b}(\alpha)=1$ and $\mathsf{b}(\neg\alpha)=1$;\\
3. $\nu(\alpha)=T \ \Leftrightarrow \ \mathsf{b}(\alpha)=1$ and $\mathsf{b}(\neg\alpha)=0$.

Notice that $\nu$ is well defined and, clearly, $\mathsf{b}(\alpha)=1$ if and only if $\nu(\alpha)\in D$.

\begin{prop}
$\nu$ is a $\Sigma^{\circ}$-homomorphism between ${\bf F}(\Sigma^{\circ}, \mathcal{V})$ and $\mathcal{A}_{\mbCcl}$ which also lies in $\mathcal{F}_{\mbccl}$.
\end{prop}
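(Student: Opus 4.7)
The plan is to verify two things separately: first, that $\nu$ respects the multi-operations of $\mathcal{A}_{\mbCcl}$ for each connective in $\Sigma^\circ$, and second, that $\nu$ satisfies the $t$-restriction defining $\mathcal{F}_{\mbCcl}$. Throughout the argument, the only tools available are the definition of $\nu$ (which translates back and forth between the three semantic values and pairs $(\mathsf{b}(\alpha),\mathsf{b}(\neg\alpha))$) together with the bivaluation clauses $(B1)$--$(B4)$ and the $\mbCcl$-specific clauses $(B'1)$ and $(B'3)$.

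For the binary connectives $\# \in \{\land,\lor,\to\}$, I would simply split on whether $\nu(\alpha)$ and $\nu(\beta)$ lie in $D$ or in $U$: membership in $D$ is equivalent to $\mathsf{b}(\alpha)=1$, so the tables for $\tilde{\#}$ collapse exactly to the classical truth-conditions $(B1)$--$(B3)$, showing $\nu(\alpha\#\beta)\in\tilde{\#}(\nu(\alpha),\nu(\beta))$. For negation, if $\nu(\alpha)=T$ then $\mathsf{b}(\neg\alpha)=0$, hence $\nu(\neg\alpha)=F\in U=\tilde{\neg}(T)$; if $\nu(\alpha)\in\{F,t\}$ then $\mathsf{b}(\neg\alpha)=1$ (from $\mathsf{b}(\alpha)=0$ via $(B4)$, or directly from the definition of $t$), so $\nu(\neg\alpha)\in D=\tilde{\neg}(\nu(\alpha))$.

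The delicate case is $\circ$. I would handle the three possibilities as follows: if $\nu(\alpha)=F$, then $\mathsf{b}(\alpha)=0$, and $(B4)$ gives $\mathsf{b}(\neg\alpha)=1$; hence $(B1)$ yields $\mathsf{b}(\alpha\land\neg\alpha)=0$, $(B4)$ yields $\mathsf{b}(\neg(\alpha\land\neg\alpha))=1$, and $(B'3)$ forces $\mathsf{b}(\circ\alpha)=1$, so $\nu(\circ\alpha)\in D=\tilde{\cons}(F)$. If $\nu(\alpha)=T$, then $\mathsf{b}(\alpha)=1$, $\mathsf{b}(\neg\alpha)=0$, and the same chain through $(B1)$, $(B4)$, $(B'3)$ delivers $\mathsf{b}(\circ\alpha)=1$. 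Finally, if $\nu(\alpha)=t$, then $\mathsf{b}(\alpha)=\mathsf{b}(\neg\alpha)=1$, so $(B'1)$ forbids $\mathsf{b}(\circ\alpha)=1$, whence $\mathsf{b}(\circ\alpha)=0$ and $\nu(\circ\alpha)=F\in U=\tilde{\cons}(t)$.

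For the restriction condition, assume $\nu(\alpha)=t$, so $\mathsf{b}(\alpha)=\mathsf{b}(\neg\alpha)=1$. Then $(B1)$ gives $\mathsf{b}(\alpha\land\neg\alpha)=1$; and if $\mathsf{b}(\neg(\alpha\land\neg\alpha))$ were $1$, $(B'3)$ would force $\mathsf{b}(\circ\alpha)=1$, contradicting $(B'1)$ (which would then require $\mathsf{b}(\alpha)=0$ or $\mathsf{b}(\neg\alpha)=0$). Hence $\mathsf{b}(\neg(\alpha\land\neg\alpha))=0$, so by definition $\nu(\alpha\land\neg\alpha)=T$, establishing $\nu\in\mathcal{F}_{\mbCcl}$. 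I expect the main subtlety to be precisely the $\circ$-case and the restriction clause, since these are the only places where axiom $\cl$ (via $(B'3)$) interacts with $\mbC$'s own axiom $\axexp$ (via $(B'1)$); the rest of the verification is a mechanical truth-table check.
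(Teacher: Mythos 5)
Your proof is correct and follows essentially the same route as the paper: a case analysis reducing the binary connectives and negation to the bivaluation clauses, and the same $(B1)$/$(B'3)$/$(B'1)$ contradiction argument for the restriction condition. The only difference is that you spell out the $\tilde{\cons}$ cases explicitly where the paper dismisses them as "analogous," and your derivations there are accurate.
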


\begin{proof}
It is easy to prove that $\nu$ is a homomorphism, by analyzing all the possible cases. For instance, if $\nu(\alpha\land\beta) \in D$ then $\mathsf{b}(\alpha\land\beta)=1$ and so $\mathsf{b}(\alpha)=\mathsf{b}(\beta)=1$. Hence $\nu(\alpha),\nu(\beta) \in D$ and so $\nu(\alpha\land\beta)\in D=\nu(\alpha)\,\tilde{\land}\,\nu(\beta)$. The other cases are proved analogously.
To see that $\nu$ is in $\mathcal{F}_{\mbccl}$, assume $\nu(\alpha)=t$, hence $\mathsf{b}(\alpha)=\mathsf{b}(\neg\alpha)=1$. From $(B1)$, $\mathsf{b}(\alpha\wedge\neg\alpha)=1$. If $\mathsf{b}(\neg(\alpha\wedge\neg\alpha))=1$ then $\mathsf{b}(\circ\alpha)=1$, by $(B^\prime3)$ and so, by $(B^\prime1)$, $\mathsf{b}(\alpha)=0$ or $\mathsf{b}(\neg\alpha)=0$,  a contradiction. So, $\mathsf{b}(\neg(\alpha\wedge\neg\alpha))=0$ and then $\nu(\alpha\wedge\neg\alpha)=T$.
\end{proof}

\noindent
From the considerations above, this implies the following:

\begin{theorem} [Completeness of \mbccl\ w.r.t. $\mathcal{M}_{\mbCcl}$]
	Given a set of formulas $\Gamma\cup\{\varphi\}$ of $\mbCcl$, if $\Gamma\vDash_{\mathcal{M}_{\mbCcl}}^\mathsf{RN}\varphi$ then $\Gamma\vdash_{\mbCcl}\varphi$.
\end{theorem}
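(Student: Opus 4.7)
My approach is to reduce $\mathcal{M}_{\mbCcl}$-completeness to the already-established bivaluation completeness of \mbccl\ (from \cite{CC16}), using the bridge map $\nu$ constructed just above. The overall idea is that each bivaluation $\mathsf{b}$ gives rise to a valuation $\nu\in\mathcal{F}_{\mbCcl}$ that agrees with $\mathsf{b}$ on designated-ness, so any semantical entailment over $\mathcal{F}_{\mbCcl}$ transfers to a semantical entailment over bivaluations and thence, by bivaluation completeness, to derivability in \mbccl.

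Concretely, I would fix $\Gamma,\varphi$ with $\Gamma\vDash_{\mathcal{M}_{\mbCcl}}^\mathsf{RN}\varphi$ and take an arbitrary bivaluation $\mathsf{b}$ for \mbccl\ with $\mathsf{b}[\Gamma]\subseteq\{1\}$; the target is $\mathsf{b}(\varphi)=1$. Then I invoke the $\nu$ built from $\mathsf{b}$ by the three-clause case split. Its well-definedness relies on clause $(B4)$, which prevents the pair $\mathsf{b}(\alpha)=\mathsf{b}(\neg\alpha)=0$ and guarantees that the three cases partition every formula's valuation pattern. By the preceding Proposition, $\nu$ is a $\Sigma^{\circ}$-homomorphism into $\mathcal{A}_{\mbCcl}$ lying in $\mathcal{F}_{\mbCcl}$, and by construction $\mathsf{b}(\alpha)=1\iff\nu(\alpha)\in D$. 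Thus $\nu[\Gamma]\subseteq D$, the hypothesis $\Gamma\vDash_{\mathcal{M}_{\mbCcl}}^\mathsf{RN}\varphi$ gives $\nu(\varphi)\in D$, and therefore $\mathsf{b}(\varphi)=1$. Since $\mathsf{b}$ was arbitrary, $\Gamma\vDash^{\mathbf{2}}_{\mbCcl}\varphi$, and bivaluation completeness yields $\Gamma\vdash_{\mbCcl}\varphi$.

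The main obstacle is not in the theorem itself, which is essentially a bookkeeping corollary, but in the preceding Proposition that the argument invokes. There, the delicate point is the $\mathcal{F}_{\mbCcl}$-membership of $\nu$: whenever $\nu(\alpha)=t$ one must force $\nu(\alpha\wedge\neg\alpha)=T$ rather than $t$. This is precisely where axiom $\axcl$, encoded semantically by clause $(B^\prime 3)$, earns its keep: from $\nu(\alpha)=t$ one gets $\mathsf{b}(\alpha)=\mathsf{b}(\neg\alpha)=1$ and so $\mathsf{b}(\alpha\wedge\neg\alpha)=1$ by $(B1)$; were $\mathsf{b}(\neg(\alpha\wedge\neg\alpha))=1$ as well, $(B^\prime 3)$ would give $\mathsf{b}(\circ\alpha)=1$, which together with $(B^\prime 1)$ contradicts $\mathsf{b}(\alpha)=\mathsf{b}(\neg\alpha)=1$. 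Once this step is secured, the remainder of the Proposition is a routine case analysis against the tables for $\tilde{\wedge},\tilde{\vee},\tilde{\to},\tilde{\neg},\tilde{\circ}$ using clauses $(B1)$--$(B4)$ and $(B^\prime 1)$, and the completeness theorem then falls out immediately.
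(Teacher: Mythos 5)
Your proposal is correct and follows essentially the same route as the paper: both reduce the claim to bivaluation completeness via the bridge valuation $\nu$ built from $\mathsf{b}$ by the three-case split, and both secure $\nu\in\mathcal{F}_{\mbCcl}$ by the same $(B1)$--$(B^\prime 3)$--$(B^\prime 1)$ contradiction argument showing $\mathsf{b}(\neg(\alpha\wedge\neg\alpha))=0$ whenever $\nu(\alpha)=t$. Nothing further is needed.
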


\subsection{The case of \CILA} \label{Cila}

Consider the previously defined $\Sigma^{\circ}$-multialgebra $\mathcal{A}_{\mbccl}$: we define a submultialgebra $\mathcal{A}_{\CILA}$ of $\mathcal{A}_{\mbccl}$ trough the following tables, where  $D=\{t, T\}$.
\begin{figure}[H]
\centering
\begin{minipage}[t]{4.5cm}
\centering
\begin{tabular}{|l|c|c|r|}
\hline
$\tilde{\vee}$ & $F$ & $t$ & $T$ \\ \hline
$F$ & $\{F\}$ & $D$ & $\{T\}$ \\ \hline
$t$ & $D$ & $D$ & $D$ \\ \hline
$T$ & $\{T\}$ & $D$ & $\{T\}$ \\ \hline
\end{tabular}
\end{minipage}
\centering
\begin{minipage}[t]{4.5cm}
\centering
\begin{tabular}{|l|c|c|r|}
\hline
$\tilde{\wedge}$ & $F$ & $t$ & $T$ \\ \hline
$F$ & $\{F\}$ & $\{F\}$ & $\{F\}$ \\ \hline
$t$ & $\{F\}$ & $D$ & $D$ \\ \hline
$T$ & $\{F\}$ & $D$ & $\{T\}$ \\ \hline
\end{tabular}
\end{minipage}
\end{figure}
\vspace*{-5mm}
\begin{figure}[H]
\centering
\begin{minipage}[t]{3cm}
\centering
\begin{tabular}{|l|r|}
\hline
 & $\tilde{\neg}$ \\ \hline
$F$ & $\{T\}$\\ \hline
$t$ & $D$\\ \hline
$T$ & $\{F\}$ \\ \hline
\end{tabular}
\end{minipage}
\begin{minipage}[t]{5cm}
\centering
\begin{tabular}{|l|c|c|r|}
\hline
$\tilde{\rightarrow}$ & $F$ & $t$ & $T$ \\ \hline
$F$ & $\{T\}$ & $D$ & $\{T\}$ \\ \hline
$t$ & $\{F\}$ & $D$ & $D$ \\ \hline
$T$ & $\{F\}$ & $D$ & $\{T\}$ \\ \hline
\end{tabular}
\end{minipage}
\begin{minipage}[t]{3cm}
\centering
\begin{tabular}{|l|r|}
\hline
 & $\tilde{\circ}$ \\ \hline
$F$ & $\{T\}$\\ \hline
$t$ & $\{F\}$\\ \hline
$T$ & $\{T\}$ \\ \hline
\end{tabular}
\end{minipage}
\end{figure}
 Let $\mathcal{M}_{\cila}=(\mathcal{A}_{\cila}, D, \mathcal{F}_{\cila})$ be  the restricted Nmatrix  where $\mathcal{F}_{\cila}$ is the set of homomorphisms $\nu:{\bf F}(\Sigma^{\circ}, \mathcal{V})\rightarrow\mathcal{A}_{\CILA}$ such that, if $\nu(\alpha)=t$, then $\nu(\alpha\wedge\neg\alpha)=T$.

Since $\mathcal{A}_{\CILA}$ is a submultialgebra of $\mathcal{A}_{\mbccl}$ and $\mathcal{F}_{\CILA}$ is a subset of $\mathcal{F}_{\mbccl}$, it is clear that $\mathcal{M}_{\CILA}$ models the axiom schemata and the  inference rule of $\mbccl$. It is easy to prove that it also models \axci, {\bf cf} and \axca$_{\#}$, for $\#\in\{\vee, \wedge, \rightarrow\}$. The following theorem is, again, proved by induction on the length of a derivation.

\begin{theorem}  [Soundness of \cila\ w.r.t. $\mathcal{M}_{\cila}$]
Given a set of formulas $\Gamma\cup\{\varphi\}$ of $\CILA$, if $\Gamma\vdash_{\CILA}\varphi$ then $\Gamma\vDash_{\mathcal{M}_{\CILA}}^\mathsf{RN}\varphi$.
\end{theorem}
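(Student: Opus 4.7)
The plan is to proceed by induction on the length of a derivation of $\varphi$ from $\Gamma$ in $\CILA$, in direct analogy with the argument already sketched for $\mbccl$. The base case $\varphi\in\Gamma$ is immediate from the definition of $\vDash_{\mathcal{M}_{\CILA}}^{\mathsf{RN}}$, and the Modus Ponens step reduces to a single observation about the table of $\tilde{\rightarrow}$ in $\mathcal{A}_{\CILA}$: the only entries $(x,y)$ with $x\in D$ that fail to land in $D$ are those with $y=F$, each returning $\{F\}$; hence $\nu(\alpha), \nu(\alpha\rightarrow\beta)\in D$ force $\nu(\beta)\in D$. The bulk of the work thus lies in showing that every instance of an axiom schema receives a designated value under every $\nu\in\mathcal{F}_{\CILA}$.

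For the axioms inherited from $\mbccl$, I would avoid a fresh calculation by showing the inclusion $\mathcal{F}_{\CILA}\subseteq\mathcal{F}_{\mbccl}$ and then invoking the just-established soundness of $\mbccl$ w.r.t.\ $\mathcal{M}_{\mbccl}$. The inclusion is easy: each multioperation entry of $\mathcal{A}_{\CILA}$ is a subset of the corresponding entry of $\mathcal{A}_{\mbccl}$, so every homomorphism into $\mathcal{A}_{\CILA}$ is automatically a homomorphism into $\mathcal{A}_{\mbccl}$; and the restriction clause ``$\nu(\alpha)=t$ implies $\nu(\alpha\wedge\neg\alpha)=T$'' is literally the same in the two definitions. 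Consequently, every $\nu\in\mathcal{F}_{\CILA}$ designates every $\mbccl$-theorem, which covers all axioms of $\CILA$ except \axci, \axcf\ and $\axca_{\#}$.

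The three genuinely new schemata would then be verified directly from the refined tables of $\mathcal{A}_{\CILA}$, by a case analysis on the values that $\nu$ assigns to $\alpha$ and (for $\axca_{\#}$) $\beta$. For \axci, the cases $\nu(\alpha)\in\{F,T\}$ are trivial since $\tilde{\cons}$ yields $\{T\}$ there, so $\nu(\neg\cons\alpha)=F$ and the implication is designated; the case $\nu(\alpha)=t$ closes by a quick look at $\tilde{\rightarrow}$. For \axcf, the determinism of $\tilde{\neg}$ on inputs $F$ and $T$ ensures that $\nu(\neg\neg\alpha)\in D$ can occur only when $\nu(\alpha)\in D$, and then the implication is designated by inspection of $\tilde{\rightarrow}$. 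For each $\axca_{\#}$, the key observation is that $\tilde{\cons}$ takes values only in $\{F,T\}$, so $\nu(\cons\alpha\wedge\cons\beta)\in D$ forces $\nu(\alpha),\nu(\beta)\in\{F,T\}$; and the $2\times 2$ sub-table of $\tilde{\#}$ on $\{F,T\}\times\{F,T\}$ is itself valued in $\{F,T\}$, which is precisely where $\tilde{\cons}$ returns $\{T\}$, so $\cons(\alpha\#\beta)$ evaluates to $T\in D$.

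The main obstacle will be the bookkeeping for this last group of schemata: because the sharper tables of $\mathcal{A}_{\CILA}$ (compared to those of $\mathcal{A}_{\mbccl}$) distinguish $t$ from $T$ within $D$, one must track the specific truth-value rather than mere membership in $D$ in order to observe that the ``classical'' sub-square $\{F,T\}\times\{F,T\}$ is closed under each $\tilde{\#}$---this closure is exactly what propagates consistency from $\alpha$ and $\beta$ to $\alpha\#\beta$. Notably, the restriction clause on $\mathcal{F}_{\CILA}$ plays no role in validating \axci, \axcf\ or $\axca_{\#}$; it enters only indirectly, through the inheritance of \axcl\ from $\mbccl$.
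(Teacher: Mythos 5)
Your proposal is correct and follows essentially the same route as the paper: the paper likewise observes that $\mathcal{A}_{\CILA}$ is a submultialgebra of $\mathcal{A}_{\mbccl}$ and $\mathcal{F}_{\CILA}\subseteq\mathcal{F}_{\mbccl}$ to inherit the $\mbccl$ axioms, verifies \axci, \axcf\ and $\axca_{\#}$ directly from the tables, and concludes by induction on the length of a derivation. Your write-up merely fills in the case analyses that the paper leaves as ``easy to prove,'' and does so accurately.
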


\noindent
The proof of completeness of \cila\ w.r.t. $\mathcal{M}_{\cila}$ is similar to that for \mbccl,  now by using the completeness of \cila\ w.r.t. bivaluations. Thus, for any bivaluation $\mathsf{b}$ for $\CILA$, consider the map $\nu:{\bf F}(\Sigma^{\circ},\mathcal{V})\rightarrow\{F,t,T\}$ defined as in the case of $\mbCcl$. Then, the following is obtained:

\begin{theorem}
$\nu$ is a $\Sigma^{\circ}$-homomorphism between ${\bf F}(\Sigma^{\circ}, \mathcal{V})$ and $\mathcal{A}_{\CILA}$ which lies in $\mathcal{F}_{\CILA}$.
\end{theorem}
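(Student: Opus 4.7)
The plan is to follow the same template as for $\mathcal{M}_{\mbCcl}$, since $\mathcal{A}_{\CILA}$ is a submultialgebra of $\mathcal{A}_{\mbccl}$ with several entries tightened from $D$ to a singleton. The restriction condition $\nu(\alpha)=t\Rightarrow \nu(\alpha\wedge\neg\alpha)=T$ is verified verbatim from the $\mbCcl$ case, so all the real work lies in checking that in every newly determined cell of the four tables, the extra bivaluation clauses available for \cila---namely $(B5)$, $(B8)$, $(B^\prime 1)$, $(B^\prime 2)$ and $(B^\prime 3)$---rule out the value that is no longer allowed.

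For the binary connectives, the non-singleton cells return $D$ and are handled as in the $\mbCcl$-proof. A singleton cell arises exactly when both arguments have $\nu$-value in $\{F,T\}$. If either argument is $F$, then $\mathsf{b}$ of that argument is $0$ and the classical clauses $(B1)$--$(B3)$ immediately fix $\mathsf{b}(\alpha\#\beta)$; the only remaining verification is that $\mathsf{b}(\neg(\alpha\#\beta))$ takes the complementary value, which follows from $(B8)$, since $\mathsf{b}(\alpha)\neq\mathsf{b}(\neg\alpha)$ and $\mathsf{b}(\beta)\neq\mathsf{b}(\neg\beta)$ whenever $\nu(\alpha),\nu(\beta)\in\{F,T\}$. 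The same application of $(B8)$ fixes the case where both values are $T$, such as $T\,\tilde{\wedge}\,T=\{T\}$, $T\,\tilde{\vee}\,T=\{T\}$, and $T\,\tilde{\to}\,T=\{T\}$.

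For negation the only new requirement beyond the $\mbCcl$ case is $\nu(\alpha)=F\Rightarrow\nu(\neg\alpha)=T$: here $(B4)$ gives $\mathsf{b}(\neg\alpha)=1$, and the contrapositive of $(B5)$ gives $\mathsf{b}(\neg\neg\alpha)=0$, whence $\nu(\neg\alpha)=T$ by definition of $\nu$. For consistency, the case $\nu(\alpha)=t$ is the same as for $\mbCcl$. If $\nu(\alpha)\in\{F,T\}$, then $\mathsf{b}(\alpha)=0$ or $\mathsf{b}(\neg\alpha)=0$, so $(B1)$ gives $\mathsf{b}(\alpha\wedge\neg\alpha)=0$ and then $(B4)$ gives $\mathsf{b}(\neg(\alpha\wedge\neg\alpha))=1$; $(B^\prime 3)$ now yields $\mathsf{b}(\circ\alpha)=1$, while the contrapositive of $(B^\prime 2)$ yields $\mathsf{b}(\neg\circ\alpha)=0$. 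Hence $\nu(\circ\alpha)=T$ in both cases, matching the entries $\tilde{\circ}(F)=\tilde{\circ}(T)=\{T\}$.

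The main obstacle is purely the case bookkeeping: each row of each of the four tables must be inspected and matched to the right bivaluation clause. There is no conceptual hurdle beyond observing that $(B8)$ is precisely the clause that collapses the previously non-deterministic entries to singletons at the rows where neither argument is ``inconsistently true'', and that $(B^\prime 2)$ and $(B5)$ play analogous roles for $\tilde{\circ}$ and $\tilde{\neg}$ respectively.
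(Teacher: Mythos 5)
Your proposal is correct and follows exactly the route the paper intends (the paper itself only says the verification is ``as in the $\mbCcl$ case'' and leaves the case analysis implicit); your assignment of $(B8)$ to the newly deterministic binary cells, $(B4)$--$(B5)$ to $\tilde{\neg}F=\{T\}$, and $(B'2)$--$(B'3)$ to $\tilde{\circ}F=\tilde{\circ}T=\{T\}$ is precisely what is needed. One harmless slip: it is not true that singleton cells occur exactly when both arguments are Boolean (e.g.\ $F\,\tilde{\wedge}\,t=\{F\}$ and $t\,\tilde{\to}\,F=\{F\}$), but those cells were already singletons equal to $U$ in $\mathcal{A}_{\mbccl}$ and only require $\mathsf{b}(\alpha\#\beta)=0$, so no appeal to $(B8)$ is needed there and your argument still covers them.
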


\begin{theorem} [Completeness of \cila\ w.r.t. $\mathcal{M}_{\cila}$]
	Given a set of formulas $\Gamma\cup\{\varphi\}$ of $\cila$, if $\Gamma\vDash_{\mathcal{M}_{\cila}}^\mathsf{RN}\varphi$ then $\Gamma\vdash_{\cila}\varphi$.
\end{theorem}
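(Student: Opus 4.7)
The plan is to mimic the \mbccl\ completeness argument, reducing to the completeness of \cila\ with respect to bivaluations (a well-known result recalled in the paper from~\cite{CC16}) together with the immediately preceding theorem, which supplies, for each \cila-bivaluation $\mathsf{b}$, a valuation $\nu\in\mathcal{F}_{\cila}$ agreeing with $\mathsf{b}$ in the sense that $\mathsf{b}(\alpha)=1$ iff $\nu(\alpha)\in D$. Once these two ingredients are in hand, the proof is essentially a diagram chase and should occupy only a few lines.

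Concretely, I would suppose $\Gamma\vDash_{\mathcal{M}_{\cila}}^{\mathsf{RN}}\varphi$ and take an arbitrary \cila-bivaluation $\mathsf{b}$ with $\mathsf{b}[\Gamma]\subseteq\{1\}$. Define $\nu:{\bf F}(\Sigma^\circ,\mathcal{V})\to\{F,t,T\}$ by the same three clauses used in the \mbccl\ subsection. By the preceding theorem, $\nu$ is a $\Sigma^\circ$-homomorphism into $\mathcal{A}_{\cila}$ that lies in $\mathcal{F}_{\cila}$, so it is an admissible valuation of $\mathcal{M}_{\cila}$, and the equivalence $\mathsf{b}(\alpha)=1\iff\nu(\alpha)\in D$ is built into the definition of $\nu$. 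Hence $\nu[\Gamma]\subseteq D$, so $\nu(\varphi)\in D$ by hypothesis, and therefore $\mathsf{b}(\varphi)=1$. This establishes $\Gamma\vDash^{\textbf{2}}_{\cila}\varphi$, and bivaluation completeness of \cila\ then yields $\Gamma\vdash_{\cila}\varphi$.

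The theorem itself is thus essentially bookkeeping; the substantive work has been absorbed into the preceding theorem, which is where I expect the only genuine obstacle to lie. There one must check that $\nu$ respects the narrower multioperations of $\mathcal{A}_{\cila}$ (for instance $\tilde{\neg}(F)=\{T\}$, $T\,\tilde{\wedge}\,T=\{T\}$, or $F\,\tilde{\vee}\,T=\{T\}$) rather than the looser outputs of $\mathcal{A}_{\mbccl}$. This refinement is handled by invoking the additional bivaluation clauses $(B5)$ and $(B8)$ available for \cila: $(B5)$ forces $\mathsf{b}(\neg\neg\alpha)=0$ whenever $\mathsf{b}(\alpha)=0$, so that $\nu(\neg\alpha)=T$ in the entry $\tilde{\neg}(F)$; and $(B8)$ forces the classical sharpening of each $\#$-row whenever both arguments lie in $\{F,T\}$, since then $\mathsf{b}(\alpha)\neq\mathsf{b}(\neg\alpha)$ and $\mathsf{b}(\beta)\neq\mathsf{b}(\neg\beta)$, which yields $\mathsf{b}(\alpha\#\beta)\neq\mathsf{b}(\neg(\alpha\#\beta))$. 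The distinguishing condition of $\mathcal{F}_{\cila}$, namely $\nu(\alpha)=t\Rightarrow\nu(\alpha\wedge\neg\alpha)=T$, persists verbatim from the \mbccl\ case through $(B^\prime1)$ and $(B^\prime3)$, since every \cila-bivaluation is in particular an \mbccl-bivaluation.
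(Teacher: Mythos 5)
Your proposal matches the paper's argument exactly: the paper likewise reduces completeness to the bivaluation completeness of \cila, reusing the map $\nu$ from the \mbccl\ case and the preceding theorem asserting that $\nu$ is a homomorphism into $\mathcal{A}_{\cila}$ lying in $\mathcal{F}_{\cila}$. Your identification of where the extra work lies — using $(B5)$ and $(B8)$ to meet the narrowed multioperations, with the $\mathcal{F}_{\cila}$ condition inherited from the \mbccl\ case — is also the correct account of what that preceding theorem requires.
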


\noindent

\subsection{Row-branching truth-tables as a decision procedure for $\cila$} \label{decision-Cila}

The RNmatrix $\mathcal{M}_{\cila}$ induces a simple decision procedure for \cila, while its $\Sigma$-reduct induces one for $C_1$, taking into consideration that \cila\ is a conservative extension of $C_1$. A rigorous proof will be presented now. 

Let $\varphi$ be a formula over the signature $\Sigma^\circ$. A (finite) row-branching truth-table for $\varphi$ can be defined by means of the three-valued multialgebra $\mathcal{A}_{\CILA}$, with the restrictions imposed by $\mathcal{F}_{\cila}$. This is easy to do in a systematic way: let $\varphi_1$, \ldots, $\varphi_k=\varphi$ be the sequence formed by all the subformulas of $\varphi$ linearly ordered by  complexity, that is, $l(\varphi_i) \leq l(\varphi_{i+1})$, for $1 \leq i \leq k-1$, where $l(\alpha)$ denotes the complexity of $\alpha \in {\bf F}(\Sigma^{\circ}, \mathcal{V})$ (formulas with the same complexity are arranged arbitrarily).
%\footnote{The complexity of $\alpha \in {\bf F}(\Sigma^{\circ}, \mathcal{V})$ is defined as follows: $l(p)=0$ if $p \in \mathcal{V}$; $l(\neg\alpha)=l(\alpha)+1$; $l(\cons\alpha)=l(\alpha)+2$; and $l(\alpha \,\#\, \beta)=l(\alpha)+l(\beta)+1$.} 
Hence, the first $n$ coluns correspond to the propositional variables  $p_1, \ldots, p_n$ occurring in $\varphi$. Given that $\mathcal{A}_{\CILA}$ is a multialgebra, a complex formula $\varphi_i$ can receive more than one truth-value in a row containing  the truth-values of its immediate subformulas; in this case, that row splits into several new ones, one for each possible value assigned to $\varphi_i$ on that row. In order to attend the restriction on valuations of $\mathcal{F}_{\cila}$, it suffices to proceed as follows: if $\varphi_i$ has the form  $\alpha \land \neg\alpha$, then any row in which $\alpha$ (which must appear on a column $\varphi_j$, for $j < i$) gets the value $t$ should split into two rows, one with the value $T$ and the other with the value $t$. Then, the row which assigns the value $t$ to $\varphi_i$  must be discarded. This is illustrated by the following  figure:
% (here, $\nu_0(p_i)$ denotes the value of variable $p_i$ on an specific row of the table; this notation will become clearer when proving Theorem~\ref{decprocC1} below): 

{\small
\begin{center}
	\begin{tabular}{| c | c | c | c | c |c |c |c  |c|c|}
		\hline $p_1$ & $\ldots$ & $p_k$  & $\ldots$ &
		$\alpha$ & $\ldots$ & $\neg\alpha$ & $\ldots$ & $\alpha \land \neg\alpha$ & $\ldots$\\
		\hline		
		\multirow{4}{*}{$\nu_0(p_1)$} & \multirow{4}{*}{$\ldots$} & \multirow{4}{*}{$\nu_0(p_k)$}  &
		\multirow{4}{*}{$\ldots$} &  \multirow{4}{*}{$t$} &  \multirow{4}{*}{$\ldots$} &   \multirow{2}{*}{$T$}&  \multirow{2}{*}{$\ldots$} & $T$ & $\ldots$ \\
		\cline{9-10}
		& &  &  &  &  & & &  $\not t$ & discarded\\
		\cline{7-10}
		& &  &  &  &  & \multirow{2}{*}{$t$} &   \multirow{2}{*}{$\ldots$} & $T$ & $\ldots$\\
		\cline{9-10}
		& &  &  &  &  & & &  $\not t$ & discarded\\	
		\hline	
		
	\end{tabular}
\end{center}
}

\

\noindent Then, the process continues until the column corresponding to $\varphi_k=\varphi$ is defined. If $\varphi$ gets a designated value on each (non-discarded) row, it is declared to be valid in \cila; otherwise, it is not.

In order to prove that the process described above constitutes a sound and complete decision procedure for \cila, and so  for $C_1$, a technical result will be stated:

\begin{prop} \label{exte-hom}
Let $\emptyset \neq \Gamma_0 \subseteq {\bf F}(\Sigma^{\circ},\mathcal{V})$  be a finite set closed by subformulas, that is: if $\alpha \in \Gamma_0$ and $\beta$ is a strict subformula of $\alpha$, then $\beta \in \Gamma_0$. Let $\nu_0:\Gamma_0\rightarrow\{T,t,F\}$  be a function satisfying the following:
\begin{enumerate}
\item if $\#\beta$ belongs to $\Gamma_0$, for some  $\# \in \{\neg,\cons\}$, then $\nu_0(\#\beta) \in \tilde{\#} \nu_0(\beta)$; 
\item if $\varphi \, \# \, \psi$  belongs to $\Gamma_0$, for some  $\# \in \{\land, \lor, \to\}$, then $\nu_0(\varphi \, \# \, \psi) \in$\\$ \nu_0(\varphi) \, \tilde{\#} \, \nu_0(\psi)$;
\item if $\alpha \land \neg\alpha$ belongs to $\Gamma_0$ and $\nu_0(\alpha)=t$, then  $\nu_0(\alpha \land \neg\alpha)=T$.
\end{enumerate}
In that case, there exists a homomorphism $\nu$ in $\mathcal{F}_{\cila}$ extending $\nu_0$, i.e., such that $\nu(\alpha)=\nu_0(\alpha)$ for every $\alpha \in \Gamma_0$.
\end{prop}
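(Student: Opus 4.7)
The plan is to build $\nu$ by recursion on the complexity $l$ of formulas, using $\nu_0$ as a base on $\Gamma_0$ and making compatible choices elsewhere. First, enumerate all formulas of $\bF(\Sigma^\circ,\matV)$ in some order of non-decreasing complexity. Define $\nu$ stage by stage, maintaining the invariant that $\nu$ agrees with $\nu_0$ on $\Gamma_0$ as far as it has been defined, that $\nu$ is a partial homomorphism, and that the $\mathcal{F}_{\cila}$-restriction is met for every $\alpha\land\neg\alpha$ processed so far.

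At stage zero (atoms), for each propositional variable $p$ in $\Gamma_0$ set $\nu(p)=\nu_0(p)$, and for each $p\notin\Gamma_0$ assign an arbitrary value, say $\nu(p)=F$ (any choice works since there is no homomorphism constraint at atoms). At a successor stage, consider a compound formula $\varphi$ whose strict subformulas already have $\nu$-values. If $\varphi\in\Gamma_0$, set $\nu(\varphi)=\nu_0(\varphi)$; by subformula closure of $\Gamma_0$, every strict subformula of $\varphi$ lies in $\Gamma_0$, so by the inductive hypothesis $\nu$ agrees with $\nu_0$ on those subformulas, and conditions (1)--(2) on $\nu_0$ guarantee that $\nu_0(\varphi)$ belongs to the appropriate multioperation applied to the $\nu$-values of its immediate subformulas. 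If $\varphi\notin\Gamma_0$, pick any element of the relevant multioperation $\tilde{\#}(\nu(\beta))$ or $\nu(\beta)\,\tilde{\#}\,\nu(\gamma)$; when $\varphi$ has the particular shape $\alpha\land\neg\alpha$ with $\nu(\alpha)=t$, deliberately select $T$ from the multioperation output.

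The only non-routine verification is that the deliberate choice of $T$ in the last clause is always legal, i.e.\ $T\in\nu(\alpha)\,\tilde{\land}\,\nu(\neg\alpha)$ whenever $\nu(\alpha)=t$. This is just a table inspection: from $\tilde{\neg}(t)=\{t,T\}$ we have $\nu(\neg\alpha)\in\{t,T\}$, and from the $\tilde{\land}$-table both $t\,\tilde{\land}\,t$ and $t\,\tilde{\land}\,T$ equal $D=\{t,T\}$, which contains $T$. For formulas $\alpha\land\neg\alpha$ that happen to already lie in $\Gamma_0$, subformula closure places $\alpha$ in $\Gamma_0$ too, and condition (3) on $\nu_0$ already enforces $\nu_0(\alpha\land\neg\alpha)=T$ whenever $\nu_0(\alpha)=t$, so the two clauses are consistent.

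The main potential obstacle is exactly the tension between the restriction defining $\mathcal{F}_{\cila}$ and the freedom of the multioperations: one must be sure that enforcing $\nu(\alpha\land\neg\alpha)=T$ never falls outside $\nu(\alpha)\,\tilde{\land}\,\nu(\neg\alpha)$, and that this enforcement never conflicts with a prior commitment from $\nu_0$. Both points are handled by the table check and subformula closure above. Once the recursion terminates (on each formula, after finitely many steps since $l$ is well-founded), the resulting map $\nu$ is a $\Sigma^\circ$-homomorphism by construction, satisfies the $\mathcal{F}_{\cila}$-restriction for every formula of the form $\alpha\land\neg\alpha$, and extends $\nu_0$, as required.
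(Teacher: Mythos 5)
Your proof is correct and follows essentially the same route as the paper's: induction on complexity, copying $\nu_0$ on $\Gamma_0$, arbitrary legal choices elsewhere, and forcing $\nu(\alpha\land\neg\alpha)=T$ when $\nu(\alpha)=t$, justified by the table check $\tilde{\neg}(t)=D$ and $t\,\tilde{\land}\,t=t\,\tilde{\land}\,T=D\ni T$. The only (harmless) difference is organizational: the paper pre-assigns $\nu(\neg\alpha)$ and $\nu(\alpha\land\neg\alpha)$ at the same induction stage as $\nu(\alpha)$, whereas you defer the forced choice until the conjunction itself is reached in the complexity ordering.
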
 
\begin{proof}
The valuation $\nu$ will be defined by induction on the complexity $l$ of formulas, which is given by: $l(p)=0$ if $p \in \mathcal{V}$; $l(\neg\alpha)=l(\alpha)+1$; $l(\cons\alpha)=l(\alpha)+2$; and $l(\alpha \,\#\, \beta)=l(\alpha)+l(\beta)+1$.  Moreover, for each  step $n \geq 0$, we must define the value $\nu(\alpha)$, for every formula $\alpha$ with complexity $n$, plus the values $\nu(\neg\alpha)$ and $\nu(\alpha \land \neg\alpha)$.   

Observe that, if $\alpha \notin \Gamma_0$ then, for no formula $\beta$ in $\Gamma_0$, $\alpha$ is a subformula of $\beta$; in particular, $\neg\alpha\notin \Gamma_0$ and $\alpha \land \neg \alpha\notin \Gamma_0$. This means that if $\alpha \not\in \Gamma_0$, the value $\nu(\alpha)$ to be assigned  to $\alpha$, which is chosen with some degree of arbitrariness, will not interfere in the already given value $\nu_0(\beta)$ of any formula $\beta \in \Gamma_0$ in which $\alpha$ is a subformula.
%, since such a formula $\beta$ cannot exist. 
Now, the mapping $\nu$ will be defined inductively.

For every $p \in \mathcal{V}$ define $\nu(p)$ as $\nu_0(p)$, if $p \in \Gamma_0$, and arbitrarily otherwise; this defines $\nu$ for every propositional variable. Now, if $\neg p \in \Gamma_0$, define $\nu(\neg p)=\nu_0(\neg p)$; otherwise, take an arbitrary value in $\tilde{\neg}\nu(p)$.  If $p \land \neg p \in \Gamma_0$, define $\nu(p \land \neg p)=\nu_0(p \land \neg p)$. Otherwise: if $\nu(p)=t$, define $\nu(p\wedge\neg p)=T$; if $\nu(p)\neq t$, define $\nu(p \land \neg p)$ as any value of $\nu(p) \, \tilde{\land} \, \nu(\neg p)$. This concludes the base step $n=0$. 

Suppose that $\nu(\alpha)$, as well as $\nu(\neg\alpha)$ and $\nu(\alpha \land \neg\alpha)$, were already defined, for every formula with complexity $n\geq 0$, satisfying the requirements (induction hypothesis). Let $\alpha$ have complexity $n+1$. Suppose that $\alpha=\neg\beta$. Since $\beta$ has complexity $n$, then  $\nu(\neg\beta)$ was already defined and satisfies $\nu(\neg\beta)=\nu_0(\neg\beta)$, if $\neg\beta \in \Gamma_0$, and  $\nu(\neg\beta) \in \tilde{\neg} \nu(\beta)$ otherwise.  Suppose now that $\alpha=\cons\beta$: if $\alpha \in \Gamma_0$, take $\nu(\alpha)=\nu_0(\alpha)$; otherwise, $\nu(\cons\beta)$ may take any value in $\tilde{\cons} \nu(\beta)$ (by observing that $\nu(\beta)$ was already defined). Now, assume that $\alpha=\beta \, \# \, \gamma$. If $\alpha=\beta \land \neg \beta$ then $\nu(\alpha) \in \nu(\beta)\,\tilde{\land} \, \nu(\neg\beta)$ was already defined and satisfies that $\nu(\beta \land \neg\beta)=\nu_0(\beta \land \neg\beta)$, if $\beta \land \neg\beta \in \Gamma_0$,  and   $\nu(\beta \land \neg\beta)=T$ if $\nu(\beta)=t$. Thus, suppose that $\alpha\neq\beta \land \neg \beta$.  If $\alpha \in \Gamma_0$ then define $\nu(\alpha)=\nu_0(\alpha)$; otherwise, define $\nu(\alpha) \in \nu(\beta) \, \tilde{\#} \, \nu(\gamma)$ arbitrarily, by observing that the values $\nu(\beta)$ and $\nu(\gamma)$ were already defined. Finally, we define the values of $\nu(\neg\alpha)$ and $\nu(\alpha \land \neg\alpha)$. If $\neg\alpha \in \Gamma_0$, take $\nu(\neg\alpha)=\nu_0(\neg\alpha)$; otherwise, $\nu(\neg\alpha)$ takes an arbitrary value in $\tilde{\neg} \nu(\alpha)$. If $\alpha \land \neg\alpha \in \Gamma_0$, define $\nu(\alpha \land \neg\alpha)=\nu_0(\alpha \land \neg\alpha)$; if $\alpha\wedge\neg\alpha$ is not in $\Gamma_0$,  define $\nu(\alpha \land \neg \alpha)=T$ if $\nu(\alpha)=t$ and, otherwise, define $\nu(\alpha \land \neg\alpha) \in \nu(\alpha)\,\tilde{\land} \, \nu(\neg\alpha)$ arbitrarily. This completes the step $n+1$.

It is easy to check that $\nu:{\bf F}(\Sigma^{\circ},\mathcal{V})\rightarrow\{T,t,F\}$  is a function which satisfies the required properties.
\end{proof}

\begin{theorem} \label{decprocC1}
The process described above for constructing a row-branching truth-table for any formula $\varphi$ constitutes a sound and complete decision procedure for \cila\ (hence, for $C_1$) based on the RNmatrix $\mathcal{M}_{\cila}$. That is, a formula $\varphi$ is valid in \cila\ iff  the table defined for $\varphi$ by the process above assigns a designated value to $\varphi$ on each row.
\end{theorem}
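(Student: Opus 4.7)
The plan is to combine the completeness theorem for \cila\ w.r.t. $\mathcal{M}_{\cila}$ established above with the extension lemma (Proposition~\ref{exte-hom}), so as to reduce the question of validity in \cila\ to the finite enumeration of rows produced by the table. By that completeness result, $\vdash_{\cila}\varphi$ iff $\nu(\varphi)\in D$ for every $\nu\in\mathcal{F}_{\cila}$; hence it suffices to show that the non-discarded rows of the table at the column $\varphi_k=\varphi$ realize exactly the values $\{\nu(\varphi) : \nu\in\mathcal{F}_{\cila}\}$.

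First I would let $\Gamma_0=\{\varphi_1,\ldots,\varphi_k\}$ be the (finite, subformula-closed) set of subformulas of $\varphi$ and observe that each non-discarded row of the table determines a function $\nu_0\colon\Gamma_0\to\{F,t,T\}$ satisfying conditions (1)--(3) of Proposition~\ref{exte-hom}. Conditions (1) and (2) are automatic because each splitting step chooses the value of a compound $\varphi_i$ from the appropriate multioperation applied to the values of its immediate subformulas, which were already computed in earlier columns thanks to the complexity ordering. Condition (3) is enforced by the discarding rule, which removes precisely those rows in which $\alpha\land\neg\alpha$ gets value $t$ while $\alpha$ has value $t$. Conversely, any function $\nu_0$ on $\Gamma_0$ meeting (1)--(3) arises from a unique non-discarded row, by following the construction process in the order $\varphi_1,\ldots,\varphi_k$.

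Next I would invoke Proposition~\ref{exte-hom} to lift each such $\nu_0$ to a full valuation $\nu\in\mathcal{F}_{\cila}$ extending it; conversely, every $\nu\in\mathcal{F}_{\cila}$ restricts to a function on $\Gamma_0$ obeying (1)--(3). Consequently, the set of values of $\varphi$ appearing across the non-discarded rows coincides with $\{\nu(\varphi) : \nu\in\mathcal{F}_{\cila}\}$, and the claimed characterization of validity in \cila\ follows. The corresponding decision procedure for $C_1$ is then obtained by restricting to the $\Sigma$-reduct of the construction, using the fact that \cila\ is a conservative extension of $C_1$, so that validity of a $\Sigma$-formula in $C_1$ is equivalent to its validity in \cila.

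The main obstacle, which is essentially a bookkeeping matter, is the precise verification that rows of the table correspond bijectively to functions $\nu_0\colon\Gamma_0\to\{F,t,T\}$ satisfying (1)--(3). In particular, one must check that when a subformula of the form $\alpha\land\neg\alpha$ is introduced into the table, the value already assigned to $\alpha$ in an earlier column permits the discarding rule to be applied unambiguously, and that no further global constraints coming from $\mathcal{F}_{\cila}$ are overlooked. This last point is guaranteed by Proposition~\ref{exte-hom}, which is the true workhorse of the argument: it tells us that the local conditions (1)--(3) exhaust the obstructions to being the restriction of a genuine valuation in $\mathcal{F}_{\cila}$.
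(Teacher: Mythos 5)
Your proposal is correct and follows essentially the same route as the paper: both arguments identify the non-discarded rows of the table with functions $\nu_0$ on the subformula-closed set $\Gamma_0$ satisfying the hypotheses of Proposition~\ref{exte-hom}, use that proposition to extend each such $\nu_0$ to a valuation in $\mathcal{F}_{\cila}$, and observe conversely that every valuation in $\mathcal{F}_{\cila}$ restricts to one of these $\nu_0$, so that the values of $\varphi$ across rows coincide with $\{\nu(\varphi)\,:\,\nu\in\mathcal{F}_{\cila}\}$. The extra care you take in checking conditions (1)--(3) row by row is just a more explicit rendering of what the paper asserts in passing.
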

\begin{proof}
Given $\varphi$, construct a (necessarily finite) branching truth-table for $\varphi$ as indicated by the process above.  Let $\Gamma_0$ be the set formed by $\varphi$ together with all of its subformulas. Clearly, $\Gamma_0$ satisfies the hypothesis of Proposition~\ref{exte-hom}, and the procedure defines for each row in the table a function $\nu_0:\Gamma_0\rightarrow\{T,t,F\}$ satisfying the hypothesis of Proposition~\ref{exte-hom}. Hence, there exists a homomorphism $\nu$ in $\mathcal{F}_{\cila}$ extending $\nu_0$, i.e., such that $\nu(\alpha)=\nu_0(\alpha)$ for every $\alpha \in \Gamma_0$. Moreover, any 
homomorphism $\nu$ in $\mathcal{F}_{\cila}$ is obtained by extending such mappings $\nu_0$: by restricting $\nu$ to $\Gamma_0$, there is a $\nu_{0}$ whose possible extensions to homomorphisms include $\nu$. Outside $\Gamma_0$, $\nu$ can be defined arbitrarily, while preserving the conditions for being an element of $\mathcal{F}_{\cila}$: the required information for evaluating $\nu(\varphi)$ is already contained in $\Gamma_0$. From these considerations, $\varphi$ is valid in the RNmatrix $\mathcal{M}_{\cila}$ iff  the branching table for $\varphi$ assigns a designated value to $\varphi$ on each row. 
\end{proof}

\

\noindent The results above can be easily adapted to  $C_1$:
 
\begin{defi} \label{defRNC1}
Let $\matGM_{C_1}=(\mathcal{A}_{C_1}, D, \mathcal{F}_{C_1})$ be the RNmatrix obtained from   $\mathcal{M}_{\cila}$ by taking the reduct $\mathcal{A}_{C_1}$ of $\mathcal{A}_{\cila}$ to $\Sigma$ (that is, by `forgetting' $\tilde{\circ}$) and where  $\mathcal{F}_{C_1}$ is the set of valuations $\nu$ over $\mathcal{A}_{C_1}$ such that $\nu(\alpha \land \neg\alpha)=T$ whenever $\nu(\alpha)=t$.
\end{defi}

\begin{theorem} [Soundness and completeness of $C_1$ w.r.t.  $\matGM_{C_1}$] \label{souncompRNC1}
Let $\Gamma \cup \{\varphi\} \subseteq \bF(\Sigma,\matV)$. Then: $\Gamma \vdash_{C_1} \varphi$ \ iff \ $\Gamma \vDash_{\matGM_{C_1}}^\mathsf{RN} \varphi$. 
\end{theorem}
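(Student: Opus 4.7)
The plan is to mirror the template of Subsection~\ref{Cila}, substituting the $C_n$-bivaluations of Definition~\ref{bival-def} (specialized to $n=1$) for the bivaluations of $\cila$; both directions will then fall out of the patterns already established for $\mathcal{M}_{\cila}$.

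For \emph{soundness}, I would argue by induction on the length of a derivation in $C_1$. The axioms $\kax$--$\axtnd$ and the rule \MP\ are shared with $\cila$; since $\mathcal{A}_{C_1}$ is the $\Sigma$-reduct of $\mathcal{A}_{\cila}$ and every $\nu\in\mathcal{F}_{C_1}$ extends---by inductively picking a value in $\tilde{\cons}\hat{\nu}(\beta)$ for each $\cons\beta$, and, when $\hat{\nu}(\alpha)=t$ for a $\Sigma^\cons$-formula $\alpha$, choosing $\hat{\nu}(\neg\alpha)\in\tilde{\neg}(t)=\{t,T\}$ so that $\hat{\nu}(\alpha\wedge\neg\alpha)=T$ remains available---to some $\hat{\nu}\in\mathcal{F}_{\cila}$ agreeing with $\nu$ on $\Sigma$-formulas, the $\Sigma$-validity of those schemata transfers verbatim from Subsection~\ref{Cila}. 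The genuinely $C_1$-specific axioms $\axcf$, $\axexp_1$ and $\axp_1$ must be checked directly by case analysis on the three possible values of $\alpha$ (and also $\beta$, for $\axp_1$); here the restriction defining $\mathcal{F}_{C_1}$ is crucial, because whenever $\nu(\alpha)=t$ it forces $\nu(\alpha\wedge\neg\alpha)=T$ and hence $\nu(\alpha^\cons)\in\tilde{\neg}(T)=\{F\}$, making the antecedents of $\axexp_1$ and $\axp_1$ false-valued and the full implications trivially designated.

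For \emph{completeness}, I would imitate the $\cila$ construction. Given a $C_1$-bivaluation $\mathsf{b}$, define $\nu\colon\bF(\Sigma,\matV)\to\{F,t,T\}$ by the same three clauses used for $\mbCcl$ and $\cila$: $\nu(\alpha)=F$ iff $\mathsf{b}(\alpha)=0$; $\nu(\alpha)=t$ iff $\mathsf{b}(\alpha)=\mathsf{b}(\neg\alpha)=1$; and $\nu(\alpha)=T$ iff $\mathsf{b}(\alpha)=1$ and $\mathsf{b}(\neg\alpha)=0$. Using clauses $(B1)$--$(B5)$ together with $(B8)$ of Definition~\ref{bival-def}, one verifies that $\nu$ is a $\Sigma$-homomorphism into $\mathcal{A}_{C_1}$; clause $(B8)$ is what ensures that a binary connective applied to two ``classical'' arguments produces a classical output, matching the singletons in the binary tables of $\mathcal{A}_{C_1}$, while the mixed and ``both $t$'' cases are absorbed by the nondeterminism of those tables. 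The membership $\nu\in\mathcal{F}_{C_1}$ follows from $(B1)$ together with $(B6)_1$: if $\nu(\alpha)=t$, then $\mathsf{b}(\alpha)=\mathsf{b}(\neg\alpha)=1$, so $(B1)$ yields $\mathsf{b}(\alpha\wedge\neg\alpha)=1$ and $(B6)_1$ yields $\mathsf{b}(\alpha^\cons)=\mathsf{b}(\neg(\alpha\wedge\neg\alpha))=0$, forcing $\nu(\alpha\wedge\neg\alpha)=T$. Since $\mathsf{b}(\alpha)=1$ iff $\nu(\alpha)\in D$ by construction, assuming $\Gamma\vDash_{\matGM_{C_1}}^{\mathsf{RN}}\varphi$ and taking a $C_1$-bivaluation $\mathsf{b}$ with $\mathsf{b}[\Gamma]\subseteq\{1\}$ gives the corresponding $\nu\in\mathcal{F}_{C_1}$ with $\nu[\Gamma]\subseteq D$, hence $\nu(\varphi)\in D$ and $\mathsf{b}(\varphi)=1$; Theorem~\ref{comple-Cn-biv} then concludes $\Gamma\vdash_{C_1}\varphi$.

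The main (if still routine) obstacle is the case analysis required to verify that $\nu$ is a $\Sigma$-homomorphism for the binary connectives: one must split on whether each argument is classical or inconsistent, invoke $(B8)$ to rule out the bad configurations in the ``both classical'' subcase, and consult the multioperation tables of $\mathcal{A}_{C_1}$ to confirm that the induced value always lies in the prescribed multivalue. The direct soundness check for $\axp_1$ is similarly patience-demanding, running over nine combinations of values of $\alpha$ and $\beta$.
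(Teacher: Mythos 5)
Your proof is correct, but it takes a substantially more self-contained route than the paper does. The paper's own proof is a two-line reduction: since $\cila$ is a conservative extension of $C_1$ and $\matGM_{C_1}$ is by definition the $\Sigma$-reduct of $\mathcal{M}_{\cila}$ with the same restriction on valuations, the theorem is read off from the soundness and completeness of $\cila$ w.r.t.\ $\mathcal{M}_{\cila}$ (Subsection~\ref{Cila}). Note that even this short reduction silently uses the same extension fact you make explicit in your soundness half, namely that every $\nu\in\mathcal{F}_{C_1}$ extends to some $\hat{\nu}\in\mathcal{F}_{\cila}$ agreeing with it on $\Sigma$-formulas (otherwise $\Gamma\vDash_{\mathcal{M}_{\cila}}^{\mathsf{RN}}\varphi$ would not obviously imply $\Gamma\vDash_{\matGM_{C_1}}^{\mathsf{RN}}\varphi$ for $\Sigma$-formulas), so your decision to spell that construction out, Proposition~\ref{exte-hom}-style, is a genuine contribution rather than padding. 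Your completeness half bypasses $\cila$-bivaluations and conservativity entirely, going directly through the $C_n$-bivaluations of Definition~\ref{bival-def} with $n=1$ and Theorem~\ref{comple-Cn-biv}; the key steps check out ($(B4)$/$(B5)$ give $\nu(\neg\alpha)\in\tilde{\neg}\nu(\alpha)$, $(B8)$ forces classical outputs on classical inputs to match the singleton entries of the tables, and $(B1)$ plus $(B6)_1$ give membership in $\mathcal{F}_{C_1}$). What each approach buys: the paper's is shorter but leans on the external fact that $\CILA$ and $C_1$ are equivalent systems (cited from~\cite{CM}); yours is longer but independent of the $\circ$-signature, and it is in effect the $n=1$ instance of the uniform machinery of Lemmas~\ref{lem-sound-Cn} and~\ref{lem-compl-Cn}, so it integrates $C_1$ seamlessly into the treatment of the whole hierarchy in Section~\ref{SectCn}.
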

\begin{proof}
It is an immediate consequence of the corresponding result for \cila\ w.r.t. $\mathcal{M}_{\cila}$, by definition of $\matGM_{C_1}$, and by the fact that \cila\ is a conservative extension of $C_1$.
\end{proof}

\begin{remark} \label{decprocC1mbcCl}
It is worth noting that the RNmatrix $\matGM_{C_1}$ defines a decision procedure for da Costa's logic $C_1$.
By similar considerations, it is easy to prove that the RNmatrix  $\mathcal{M}_{\mbCcl}$ for \mbccl\ constitutes a decision procedure for \mbccl. This solves the decidability of such logics by means of three-valued RNmatrices, recalling that such logics cannot be characterized by mere finite Nmatrices. It is worth noting that in~\cite[Corollary~8.106]{Avron:Arieli:Zamansky:18} it is presented a decision procedure for \cila/$C_1$ which can be obtained from the infinite characteristic  Nmatrix for \cila\ introduced in~\cite[Example~8.99]{Avron:Arieli:Zamansky:18}. As we shall see in Example~\ref{ex3valC1}, this procedure decision is much more complicated than the one introduced here by means of the three-valued RNmatrix  $\matGM_{C_1}$.
\end{remark} 

\begin{example} \label{ex3valC1}
Let $p$ be a propositional variable,  $\varphi=(p\wedge\neg p) \wedge \neg(p\wedge\neg p)$ and $\psi=\varphi \to \neg\neg p$. The following branching table in $\matGM_{C_1}$, constructed according to the procedure described at the beginning of this subsection, shows that $\varphi$ is unsatisfiable in \cila/$C_1$, hence $\psi$ is valid.

\begin{center}
\begin{tabular}{|p{1cm}|p{1cm}|p{1cm}|p{1cm}|p{1.5cm}|p{1cm}|p{1cm}|}
\hline
$p$ & $\neg p$ & $p\wedge\neg p$ & $\neg\neg p$ & $\neg(p\wedge\neg p)$  & $\varphi$ & $\psi$\\\hline
$T$ & $F$ & $F$ & $T$ & $T$ & $F$ & $T$\\\hline
\multirow{4}{*}{$t$} & $T$ & $T$ & $F$ & $F$ & $F$ & $T$\\\cline{2-7}
& \multirow{3}{*}{$t$} & \multirow{3}{*}{$T$} & $T$ & $F$ & $F$ & $T$\\\cline{4-7}
& & & \multirow{2}{*}{$t$} & \multirow{2}{*}{$F$} & \multirow{2}{*}{$F$} & $T$\\ \cline{7-7}
&&&&&& $t$\\\hline
$F$ & $T$ & $F$ & $F$ & $T$ & $F$ & $T$\\\hline
\end{tabular}
\end{center}

Now, in~\cite[Example~8.99]{Avron:Arieli:Zamansky:18} it was presented an infinite characteristic NMatrix for \cila/$C_1$ with domain $\{f\}\cup \bigcup \{\{t^j_i,\top^j_i\} \ : \ i,j \geq 0\}$. By  the decision procedure for \cila/$C_1$ described in the proof of~\cite[Corollary~8.106]{Avron:Arieli:Zamansky:18} it follows that the validity of $\psi$ must be analyzed within a sub-Nmatrix of the infinite one by restricting the truth-values to the set $\{f\}\cup \bigcup \{\{t^j_i,\top^j_i\} \ : \ 0 \leq i \leq n^*(\psi), \, 0 \leq j \leq k^*(\psi)\}$. Here, $n^*(\psi)=4$ is the number of subformulas of $\psi$ which do not begin with $\neg$, while $k^*(\psi)=2$ is the maximum number of consecutive negation symbols $\neg$ occurring in $\psi$. This means that, in order to analyze  the validity of $\psi$ by means of this decision procedure, it is required to check its validity whithin an Nmatrix with $1 + 2 \times 3 \times 5=31$ truth-values. Obviously this process is much more expensive than the six-rows branching table generated by  $\matGM_{C_1}$ displayed above. Clearly, the difference between the complexity of both decision procedures increases as the complexity of $\psi$ increases. 
\end{example}

\section{Finite RNmatrices for da Costa's hierarchy} \label{SectCn}
\vspace*{-3mm}
\subsection{An historical antecedent: Fidel structures} \label{FCn}

In 1977 Fidel proved, for the first time, the decidability of da Costa's calculi $C_n$ by means of an original algebraic-relational class of structures now called {\em Fidel structures} (see~\cite{fid:77}).\footnote{As mentioned right before Theorem~\ref{comple-Cn-biv}, da Costa and Alves,  also in 1977, introduced the method of valuations as a decision procedure for the calculi $C_n$. However, the first correct proof of soundness and completeness of that method appeared in 1980, see~\cite{lop:alv:80}. It is interesting to note that Fidel's paper was submitted on September 14, 1976 while da Costa and Alves's one was submitted on October 12, 1976, practically simultaneously.} A Fidel structure for $C_n$ is a triple $\mathcal{N}=(\mathcal{A},\{N_a\}_{a \in A},\{N^{(n)}_a\}_{a \in A})$ such that $\mathcal{A}$ is a Boolean algebra with universe $A$ and,  for every $a\in A$, $N_a,N^{(n)}_a \subseteq A$ satisfy certain properties (see~\cite[Definition~1, pp. 32-33]{fid:77}). A {\em valuation over $\mathcal{N}$ for $C_n$} is a function $v:{\bf F}(\Sigma, \mathcal{V}) \to A$ satisfying, among other  properties, that $v(\alpha \# \beta) = v(\alpha) \# v(\beta)$ for every $\# \in \Sigma_2$, $v(\neg\alpha) \in N_{v(\alpha)}$  and $v(\alpha^{(n)}) \in N^{(n)}_{v(\alpha)}$ (see~\cite[Definition~1, pp. 37-38]{fid:77}). Let $\tilde{\neg}:A \to \wp(A)\setminus\{\emptyset\}$ and $\tilde{(n)}:A \to \wp(A)\setminus\{\emptyset\}$ be given by $\tilde{\neg}\,a:=N_a$ and $\tilde{(n)}\,a:=N^{(n)}_a$, respectively. It is easy to see that the consequence relation induced by the Fidel structure $\mathcal{N}$ can be described as a structural RNmatrix $\mathcal{M}_\mathcal{N}^n=(\mathcal{A}^+, D, \mathcal{F}_\mathcal{N}^n)$ such that $\mathcal{A}^+$ is the expansion of $\mathcal{A}$ (seen as a multialgebra) by adding the multioperators $\tilde{\neg}$ and $\tilde{(n)}$, $D=\{1\}$ and $\mathcal{F}_\mathcal{N}^n$ is the set of valuations over $\mathcal{N}$ for $C_n$. Hence, Fidel structures semantics for $C_n$ are an early example of (structural) RNmatrix semantics. Analogously, it can be proven that the Fidel structures semantics considered in the literature for other logics (for instance, for several {\bf LFI}s including \mbc, \mbccl\ and \cila, as proposed in~\cite[Chapter~6]{CC16}) can be recast as (structural) RNmatrix semantics. 

In~\cite[p. 34]{fid:77} a Fidel structure over the two-element Boolean algebra {\bf 2}, called  {\bf C}, is defined, which induces a decision procedure  for $C_n$ (\cite[Theorem~5(vi)]{fid:77}). The RNmatrix $\mathcal{M}_{\bf C}^n$ for $C_n$ associated to  {\bf C} is based on the expansion $ {\bf 2}^+$ of  {\bf 2} (seen as a multialgebra) with the multioperators defined by $\tilde{\neg}\,0=\tilde{(n)}\,0=\{1\}$ and $\tilde{\neg}\,1=\tilde{(n)}\,1=\{0,1\}$. Together with this Nmatrix (where  $D=\{1\}$) it is considered a rather complicated set of valuations  $\mathcal{F}_{\bf C}^n$ for $C_n$ over {\bf C} (see~\cite[Definition~1, pp. 37-38]{fid:77}). It is interesting to observe that Fidel's classical result states that every calculus $C_n$ is decidable by a single two-element RNmatrix, to the cost of considering a set of valuations defined in a complex way.\footnote{By Remark~\ref{bival-RN}, it should be clear that Fidel's  and da Costa-Alves-Lopari\'c's solutions to the decidability problem of da Costa's systems are of a similar nature.}
One of the main goals of this paper is presenting an alternative solution of this problem by means of a more intuitive and easy to deal finite RNmatrix for da Costa's hierarchy of paraconsistent systems. This task will be accomplished in the next subsections.

\subsection{RNmatrices for $C_n$, with $n\geq 2$} \label{sect-Cn}

In Subsection~\ref{decision-Cila}  it was proved that the  RNmatrix $\matGM_{C_1}$ introduced in Definition~\ref{defRNC1} gives origin to a simple decision procedure for da Costa's $C_1$.
Now, the general case of $C_n$, for $n \geq 2$, will be analyzed.  Recall from Section~\ref{Cn} the signature $\Sigma$, as well as the notation $\alpha^1=\alpha^\cons= \lnot (\alpha \land \lnot \alpha)$. Hence, $\alpha^2=\alpha^{\cons\cons}= \lnot (\alpha^\cons \land \lnot (\alpha^\cons))$. In general, $\alpha^0= \alpha$ and $\alpha^{j+1}= \lnot (\alpha^j \land \lnot (\alpha^j))$, for $j \geq 0$. On the other hand, $\alpha^{(j)}=\alpha^1 \land\ldots \land \alpha^j$. 

To start with, let us consider $(n+1)$-tuples $z=(z_1,z_2,\ldots,z_{n+1})$ in ${\bf 2}^{n+1}$ such that each coordinate is given by $\mathsf{b}(\alpha)$, $\mathsf{b}(\neg\alpha)$, $\mathsf{b}(\alpha^1)$, $\mathsf{b}(\alpha^2)$, \ldots, $\mathsf{b}(\alpha^{n-1})$, respectively, for a given  $C_n$-bivaluation $\mathsf{b}$ and a formula $\alpha$. From the basic properties of $\mathsf{b}$ stated in Definition~\ref{bival-def}, it follows that there are exactly $n+2$ of such tuples, namely:
\[T_n=(1,0,1,\ldots,1),\quad t^n_0=(1,1,0,1,\ldots,1)\quad\cdots\quad t^n_{n-2}=(1,1,\ldots,1,0),\]
\[t^n_{n-1}=(1,1,\ldots,1)\quad\text{and}\quad F_n=(0,1,1,\ldots,1).\]
Each of such tuples will be called {\em snapshots} for $C_n$, by adopting the terminology introduced in~\cite[Chapter~6]{CC16} in the context of {\em swap structures} (which are multialgebras of snapshots).  Notice that an element of $\textbf{2}^{n+1}$ is a snapshot iff it contains at most one coordinate equal to $0$. Alternatively, the set of snapshots for $C_n$ may be defined without using $\mathsf{b}$ and $\alpha$ as $$B_n=\{z \in {\bf 2}^{n+1} \ : \ \big(\bigwedge_{i=1}^k z_i\big) \lor z_{k+1} = 1 \ \mbox{ for every } \ 1 \leq k \leq n\}.$$ 

\begin{defi} \label{def-sets}
Consider the following relevant subsets of $B_n$:
\begin{itemize}
	\item[-] $D_n:=B_n \setminus\{F_n\} = \{z \in B_n \ : \ z_1=1\}$ ({\em designated values});
	\item[-] $U_n:=\{F_n\}=B_n \setminus D_n$ ({\em undesignated values});
	\item[-] $Boo_n:=\{T_n,F_n\} = \{z \in B_n \ : \ z_1 \land z_2=0\}$ ({\em Boolean values});
	\item[-] $I_n:=B_n \setminus Boo_n = B_n \setminus\{T_n,F_n\}$ ({\em inconsistent values}).
\end{itemize}
\end{defi}

\noindent
Notice that $z \in Boo_n$ iff $z=(a,\sneg a, 1, \ldots, 1)$ for some $a \in {\bf 2}$ (here, $\sneg a$ denotes the Boolean complement of $a$ in {\bf 2}).
Now, a  $(n+2)$-valued multialgebra over $\Sigma$ with domain $B_n$ called $\A_{C_n}$ will be defined as being a swap structure based on the restrictions imposed by Definition~\ref{bival-def}:\\

\begin{defi} \label{defACn} The multialgebra $\A_{C_n} = ( B_n, \tilde{\land}, \tilde{\lor}, \tilde{\to}, \tilde{\neg})$ over $\Sigma$ is defined as follows, for any $z,w \in B_n$:\\

$\begin{array}{lccl}
(C^n_{\tilde{\neg}}) & \tilde{\neg}\, z &=& \{w \in B_n  \ : \ w_1 = z_2 \ \mbox{ and } \ w_2 \leq z_1\}\\[4mm]
(C^n_{\tilde{\#}}) & z \,\tilde{\#}\, w &=& \left \{ \begin{tabular}{ll}
$\{u \in Boo_n  \ : \ u_1 = z_1 \# w_1\}$ & if $z,w\in Boo_n$,\\[3mm]
$\{u \in B_n  \ : \ u_1 = z_1 \# w_1\}$ & otherwise,\\
\end{tabular}\right.  \\[8mm]
&&&\mbox{ for } \# \in \{\land, \lor, \to\}.
\end{array}$
\end{defi}

\

\noindent Observe that, if $z,w\in Boo_n$ then $z \,\tilde{\#}\, w$ is the singleton $\{T_n\}$ or $\{F_n\}$, which is calculated from the two-element truth-tables of \cpl\ (as it was done with $C_1$). In addition, if $z \in Boo_n$, then $z=(a,\sneg a, 1, \ldots, 1)$ for some $a \in {\bf 2}$, and so $\tilde{\neg} \, z = \{(\sneg a,a, 1, \ldots, 1)\} \subseteq Boo_n$. The multioperations of the multialgebra $\A_{C_n}$ can be presented in  a compact form as follows:

\

\begin{center}
	\begin{tabular}{| c | c | }
		\hline $z$ & $\tilde{\neg}\, z$  \\
		\hline
		
		$T_n$ & $F_n$\\
		\hline
		
		$t^n_i$ &  $D_n$\\
		\hline
		
		$F_n$ &  $T_n$\\
		\hline
		
	\end{tabular}
	\hspace{3.2cm}
	\begin{tabular}{| c | c | c | c |}
		\hline $\tilde{\to}$ & $T_n$ & $t^n_j$ & $F_n$ \\
		\hline
		
		$T_n$ & $T_n$ &  $D_n$ &  $F_n$\\
		\hline
		
		$t^n_i$ &  $D_n$  & $D_n$  & $F_n$\\
		\hline
		
		$F_n$ &  $T_n$ & $D_n$  & $T_n$\\
		\hline
		
	\end{tabular}
\end{center}

\begin{center}
	\begin{tabular}{| c | c | c | c |}
		\hline $\tilde{\land}$ & $T_n$ & $t^n_j$ & $F_n$ \\
		\hline
		
		$T_n$ & $T_n$ &  $D_n$  &  $F_n$\\
		\hline
		
		$t^n_i$ &  $D_n$  & $D_n$  & $F_n$\\
		\hline		
		
		$F_n$ &  $F_n$  & $F_n$  & $F_n$\\
		\hline
		
	\end{tabular}
	\hspace{1cm}
	\begin{tabular}{| c | c | c | c |}
		\hline $\tilde{\lor}$ & $T_n$ & $t^n_j$ & $F_n$ \\
		\hline
		
		$T_n$ & $T_n$ &  $D_n$  &  $T_n$\\
		\hline
		
		$t^n_i$ &  $D_n$  & $D_n$  & $D_n$\\
		\hline
		
		$F_n$ &  $T_n$  & $D_n$  & $F_n$\\
		\hline
		
	\end{tabular}
\end{center}

\

\noindent Recall the set $I_n$ of inconsistent values introduced in Definition~\ref{def-sets}. Now, as it was done with $C_1$, a suitable set of valuations over $\A_{C_n}$ will be considered:

\begin{defi} \label{valCn} Let $\matF_{C_n}$ be the set of valuations $\nu$ over $\A_{C_n}$ (that is, homomorphisms of multialgebras $v:\bF(\Sigma,\matV) \to \A_{C_n}$) such that, for every $\alpha$:\\
	
	$\begin{array}{cl}
	(1) & \nu(\alpha) = t^n_0 \ \mbox{ implies that } \ \nu(\alpha \land \neg\alpha)=T_n;\\[1mm]
	(2) & \nu(\alpha) = t^n_{k} \ \mbox{ implies that } \ \nu(\alpha \land \neg\alpha) \in I_n$ and $\nu(\alpha^1) = t^n_{k-1},   \\[1mm]
	& \mbox{for every } \ 1\leq k\leq n-1;\\[3mm]
	\end{array}$
	
	\noindent	Let	$\matGM_{C_n} = ( \A_{C_n}, D_n,\matF_{C_n} )$ be the restricted Nmatrix obtained from this.
\end{defi}

\noindent Clearly,  $\matGM_{C_n}$ is structural. It is worth noting that, for every $\nu \in  \matF_{C_n}$ and formula $\alpha$, $\nu(\alpha) = t^n_0$ implies that $\nu(\alpha^1) = F_n$. Hence, if $\nu(\alpha) = t^n_1$, then $\nu(\alpha^2) = F_n$. In general, if $\nu(\alpha) = t^n_i$, then $\nu(\alpha^{i+1}) = F_n$ and $\nu(\alpha^{j}) \in D_n$ for every $0\leq j\leq i$ and formula $\alpha$.  This produces the following scenario in $\matGM_{C_n}$, where $X^*$ means that the value $X$ is chosen by a valuation in $\matF_{C_n}$:\vspace*{-5mm}
\begin{center}

	\captionof{table}{}\label{tabela3}
	\begin{tabular}{| c | c | c | c | c | c | c | c | c | c |}
		\hline {\scriptsize$\alpha$} &{\scriptsize$\alpha \land \lnot \alpha$}&{\scriptsize$\alpha^1$}&\hspace{-1mm}{\scriptsize$\alpha^1 \land \lnot \alpha^1$}\hspace{-1mm}&{\scriptsize$\alpha^2$}& \ldots &\hspace{-1mm}{\scriptsize$\alpha^{n-1}$}\hspace{-1mm}&{\scriptsize$\alpha^{n-1} \land \neg \alpha^{n-1}$}&{\scriptsize$\alpha^{n}$}&{\scriptsize$\alpha^{(n)}$} \\
		\hline
		
		$T_n$ & $F_n$ & $T_n$ & $F_n$ & $T_n$ & \ldots & $T_n$ & $F_n$  & $T_n$  &  $T_n$\\
		\hline
		
		$t^n_0$ &  $T_n^*$ & $F_n$ & $F_n$ & $T_n$  & \ldots & $T_n$ & $F_n$ & $T_n$ & $F_n$\\
		\hline
		
		$t^n_1$ & $I_n^*$ & $t^{n *}_0$ & $T_n^*$ & $F_n$ &  \ldots & $T_n$ & $F_n$  & $T_n$ & $F_n$ \\	
		\hline
		
		$t^n_2$ & $I_n^*$ & $t^{n *}_1$ & $I_n^*$ & $t^{n *}_0$ &  \ldots & $T_n$ & $F_n$  & $T_n$ & $F_n$ \\	
		\hline
		
		\vdots&\vdots&\vdots&\vdots&\vdots&$\ddots$&\vdots&\vdots&\vdots&\vdots\\	
		\hline
		
		$t^n_{n-3}$ & $I_n^*$ & $t^{n *}_{n-4}$ & $I_n^*$ & $t^{n *}_{n-5}$ &  \ldots & $T_n$ & $F_n$ & $T_n$  & $F_n$\\
		\hline
		
		$t^n_{n-2}$ & $I_n^*$ & $t^{n *}_{n-3}$ & $I_n^*$ & $t^{n *}_{n-4}$ &  \ldots & $F_n$ & $F_n$ & $T_n$  & $F_n$\\
		\hline
				
		$t^n_{n-1}$ & $I_n^*$ & $t^{n *}_{n-2}$ & $I_n^*$ & $t^{n *}_{n-3}$ &  \ldots & $t^{n *}_0$ & $T_n^*$ & $F_n$  & $F_n$\\
		\hline
		
		$F_n$ &  $F_n$  & $T_n$ & $F_n$ & $T_n$  &  \ldots & $T_n$ & $F_n$  & $T_n$ & $T_n$ \\ 
		\hline
		
	\end{tabular}

\end{center}

\

\noindent Observe that the restrictions imposed by a valuation $\nu$ in $\matF_{C_n}$ to the values of formulas of the form $\alpha^j \land \neg(\alpha^j)$ and $\alpha^k$, when $\nu(\alpha)=t^n_i$, increase when $i$ increases. Namely, if  $\nu(\alpha)=t^n_i$, then $\nu$ must restrict the values of  $2i+1$ formulas involving $\alpha$:  $\alpha \land \neg\alpha$, $\alpha^j$  and $\alpha^j \land \neg(\alpha^j)$ for $1 \leq j \leq i$.

For any formula $\alpha$ let  $\sneg \alpha: = \neg \alpha \land  \alpha^{(n)}$ be the strong negation definable in $C_n$ (see~\cite{dC63}). Then, for any $\nu \in \matF_{C_n}$ we have that  $\nu(\sneg\alpha)=F_n$ if $\nu(\alpha) \in D_n$, and $\nu(\sneg\alpha)=T_n$ otherwise. Hence, if $\bot:=\alpha \land \sneg \alpha$ and $\top:=\alpha \lor \sneg\alpha$, then $\nu(\bot)=F_n$ and $\nu(\top) \in D_n$.

The proof of soundness and completeness of $C_n$ w.r.t. $\matGM_{C_n}$ requires the following three technical lemmas. The first one is easily proved with induction over $k$, and it reflects the content of Table~\ref{tabela3}. 

\begin{lemma}\label{lem_tech}
If  $\nu \in\mathcal{F}_{C_{n}}$ and $1\leq k\leq n$ then, for any $\alpha$:
\begin{enumerate}
\item if $\nu(\alpha)=T_{n}$, then $\nu(\alpha^{k})=T_{n}$;
\item if $\nu(\alpha)=t^{n}_{i}$, for some $0\leq i\leq k-2$, then $\nu(\alpha^{k})=T_{n}$;
\item if $\nu(\alpha)=t^{n}_{k-1}$, then $\nu(\alpha^{k})=F_{n}$;
\item if $\nu(\alpha)=t^{n}_{i}$, for $k\leq i\leq n-1$, then $\nu(\alpha^{k})=t^{n}_{i-k}$;
\item if $\nu(\alpha)=F_{n}$, then $\nu(\alpha^{k})=T_{n}$.
\end{enumerate}
\end{lemma}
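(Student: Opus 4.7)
The plan is to proceed by induction on $k$, using the fact that by definition $\alpha^{k+1} = \neg(\alpha^k \wedge \neg(\alpha^k)) = (\alpha^k)^1$, so the inductive step reduces to applying the base case to the formula $\alpha^k$ once the value of $\nu(\alpha^k)$ has been pinned down by the inductive hypothesis.

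For the base case $k=1$, I will compute $\nu(\alpha^1) = \nu(\neg(\alpha \wedge \neg\alpha))$ directly, branching on the value of $\nu(\alpha)$ and using Definition~\ref{defACn} together with the restrictions imposed by $\mathcal{F}_{C_n}$ (Definition~\ref{valCn}). If $\nu(\alpha) \in Boo_n$ (cases (1) and (5)), then $\nu(\neg\alpha) \in Boo_n$ too, the multioperations for $\wedge$ and $\neg$ restricted to $Boo_n$ coincide with the classical ones, and the Boolean identity $\neg(a \wedge \neg a)=1$ yields $\nu(\alpha^1)=T_n$; note that case (2) is vacuous for $k=1$ (the index range $0 \leq i \leq -1$ is empty). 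Case (3) is immediate from condition~(1) of Definition~\ref{valCn}: $\nu(\alpha)=t^n_0$ forces $\nu(\alpha \wedge \neg\alpha)=T_n$, hence $\nu(\alpha^1)=\tilde{\neg}T_n=F_n$. Case (4) is exactly the second clause of condition~(2) of Definition~\ref{valCn}.

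For the inductive step, assuming the result for some $1 \leq k \leq n-1$, I will establish it for $k+1$ by taking each of the five possible values of $\nu(\alpha)$ in turn and combining the inductive hypothesis (applied to $\alpha$ at level $k$) with the already-proved base case (applied to $\alpha^k$). Concretely: if $\nu(\alpha)=T_n$ or $\nu(\alpha)=F_n$, clauses (1) and (5) of IH give $\nu(\alpha^k)=T_n$, whence $\nu(\alpha^{k+1})=T_n$ by the base case; if $\nu(\alpha)=t^n_i$ with $0 \leq i \leq k-2$, clause (2) of IH gives $\nu(\alpha^k)=T_n$, so again $\nu(\alpha^{k+1})=T_n$; if $\nu(\alpha)=t^n_{k-1}$, clause (3) of IH gives $\nu(\alpha^k)=F_n$, and the base case yields $\nu(\alpha^{k+1})=T_n$ — the last two cases together cover clause (2) at level $k+1$. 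If $\nu(\alpha)=t^n_k$, clause (4) of IH (with $i=k$) gives $\nu(\alpha^k)=t^n_0$, so the base case (clause (3)) yields $\nu(\alpha^{k+1})=F_n$, establishing clause (3) at level $k+1$. Finally, for $k+1 \leq i \leq n-1$, clause (4) of IH gives $\nu(\alpha^k)=t^n_{i-k}$ with $1 \leq i-k \leq n-1-k$, and the base case (clause (4)) then yields $\nu(\alpha^{k+1})=t^n_{i-k-1}=t^n_{i-(k+1)}$.

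The whole argument is a bookkeeping exercise, and I do not anticipate any genuine obstacle — the only thing that requires mild care is making sure the index ranges of the five clauses at level $k+1$ are exactly reconstituted from the corresponding ranges at level $k$, in particular that the interval $0 \leq i \leq k-1$ in clause (2) at $k+1$ is correctly partitioned into $0 \leq i \leq k-2$ (handled via IH clause (2)) and $i=k-1$ (handled via IH clause (3)).
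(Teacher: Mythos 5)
Your proof is correct, and it follows exactly the route the paper indicates: the paper dismisses this lemma with the single remark that it ``is easily proved with induction over $k$'', and your argument is precisely that induction, with the base case $k=1$ handled by the definition of $\mathcal{A}_{C_n}$ together with conditions (1)--(2) of Definition~\ref{valCn}, and the inductive step reduced via $\alpha^{k+1}=(\alpha^k)^1$ to the base case applied to $\alpha^k$. The case analysis and the repartitioning of the index ranges at level $k+1$ all check out.
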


\begin{lemma} \label{lem-sound-Cn}
	Let $\nu$ be a valuation in $\matF_{C_n}$ and write, for any formula $\alpha$, $\nu(\alpha)=(\nu(\alpha)_{1}, \nu(\alpha)_{2},\ldots , \nu(\alpha)_{n+1})$. Then, the mapping $\mathsf{b}:\bF(\Sigma,\matV) \to {\bf 2}$ given by $\mathsf{b}(\alpha):=\nu(\alpha)_1$ is a $C_n$-bivaluation such that $\mathsf{b}(\alpha)=1$ iff $\nu(\alpha) \in D_n$. 
\end{lemma}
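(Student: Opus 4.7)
The plan is to verify that the mapping $\mathsf{b}$ satisfies each of the clauses $(B1)$ through $(B8)$ (including $(B6)_n$) from Definition~\ref{bival-def}, after noting the easy observation that the stated equivalence $\mathsf{b}(\alpha)=1$ iff $\nu(\alpha)\in D_n$ is immediate from $D_n=\{z\in B_n : z_1=1\}$.

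First I would handle $(B1)$--$(B3)$ in one stroke. By the definition of the binary multioperations $\tilde{\#}$ in $\A_{C_n}$, every element of $z\,\tilde{\#}\,w$ has first coordinate equal to $z_1 \# w_1$. Since $\nu$ is a homomorphism, $\nu(\alpha\#\beta)_1 = \nu(\alpha)_1\#\nu(\beta)_1$, which says exactly that $\mathsf{b}$ is classically truth-functional on $\wedge,\vee,\to$. Next, for $(B4)$: if $\mathsf{b}(\alpha)=0$ then $\nu(\alpha)=F_n=(0,1,1,\ldots,1)$, and the negation rule forces $\nu(\neg\alpha)_1 = \nu(\alpha)_2 = 1$, so $\mathsf{b}(\neg\alpha)=1$. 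For $(B5)$: applying the negation rule twice yields $\nu(\neg\neg\alpha)_1 \leq \nu(\neg\alpha)_1$ and $\nu(\neg\alpha)_2 \leq \nu(\alpha)_1$; combined with $\nu(\neg\neg\alpha)_1 = \nu(\neg\alpha)_2$, we get $\mathsf{b}(\neg\neg\alpha)=1$ implies $\mathsf{b}(\alpha)=1$.

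For $(B8)$, the key observation is that $\mathsf{b}(\alpha)\neq\mathsf{b}(\neg\alpha)$ iff $\nu(\alpha)\in Boo_n$: indeed, if $\nu(\alpha)=T_n$ then $\nu(\neg\alpha)=F_n$ and vice versa, whereas if $\nu(\alpha)=t^n_i$ then $\tilde{\neg}\,t^n_i\subseteq D_n$, forcing $\mathsf{b}(\alpha)=\mathsf{b}(\neg\alpha)=1$. The definition of $\tilde{\#}$ shows that $z,w\in Boo_n$ implies $z\,\tilde{\#}\,w\subseteq Boo_n$, which gives $(B8)$ directly.

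The remaining clauses $(B6)_n$ and $(B7)$ are the main technical work, and I expect $(B6)_n$ to be the principal obstacle since it couples the behaviour of $\alpha^{n-1}$, $\neg(\alpha^{n-1})$ and $\alpha^n$ across all possible snapshot values. I would dispatch both by a uniform case analysis on $\nu(\alpha)$, using Lemma~\ref{lem_tech} and the constraints defining $\matF_{C_n}$. For $(B7)$, the Boolean cases $\nu(\alpha)\in\{T_n,F_n\}$ yield $\mathsf{b}(\alpha)\neq\mathsf{b}(\neg\alpha)$ and, through the forced chain $\nu(\alpha\wedge\neg\alpha)=F_n$, $\nu(\alpha^1)=T_n$, $\nu(\neg\alpha^1)=F_n$, give $\mathsf{b}(\neg\alpha^1)=0$; the inconsistent cases $\nu(\alpha)=t^n_i$ yield the equality $\mathsf{b}(\alpha)=\mathsf{b}(\neg\alpha)=1$ and, via the restriction $\nu(\alpha\wedge\neg\alpha)\in\{T_n\}\cup I_n$ in $\matF_{C_n}$, produce $\nu(\alpha^1)\in\{F_n\}\cup\{t^n_j : j<n-1\}$ and hence $\mathsf{b}(\neg\alpha^1)=1$. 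For $(B6)_n$, applying Lemma~\ref{lem_tech} with $k=n-1$ and $k=n$ shows that $\nu(\alpha^{n-1})=F_n$ exactly when $\nu(\alpha)=t^n_{n-2}$, and $\nu(\alpha^n)=F_n$ exactly when $\nu(\alpha)=t^n_{n-1}$; plugging these into the two sides, both become true precisely in the case $\nu(\alpha)=t^n_{n-1}$ (where $\nu(\alpha^{n-1})=t^n_0$ forces $\nu(\neg\alpha^{n-1})\in D_n$) and both become false otherwise, establishing the biconditional.
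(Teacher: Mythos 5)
Your proposal is correct and follows essentially the same route as the paper's proof: a clause-by-clause verification of $(B1)$--$(B8)$, with $(B1)$--$(B5)$ and $(B8)$ read off from the definitions of $\tilde{\#}$, $\tilde{\neg}$ and $B_n$, and with $(B6)_n$ and $(B7)$ handled by a case analysis on $\nu(\alpha)$ using Lemma~\ref{lem_tech} and the constraints defining $\matF_{C_n}$. The only blemish is the inequality $\nu(\neg\neg\alpha)_1\leq\nu(\neg\alpha)_1$ in your $(B5)$ step, which is not what the negation rule yields and in fact fails when $\nu(\alpha)=T_n$; it is unused, however, since the chain $\nu(\neg\neg\alpha)_1=\nu(\neg\alpha)_2\leq\nu(\alpha)_1$ that you also state is what actually does the work.
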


\begin{proof}
For any $\#\in\{\vee, \wedge, \rightarrow\}$, $\mathsf{b}(\alpha\#\beta)=1$ if and only if $\nu(\alpha\#\beta)_{1}=1$; since $\nu(\alpha\#\beta)_{1}=\nu(\alpha)_{1}\#\nu(\beta)_{1}$ by the definition of $\tilde{\#}$, we have that:
\begin{enumerate}
\item $\mathsf{b}(\alpha\vee\beta)=1$ if and only if either $\mathsf{b}(\alpha)=1$ or $\mathsf{b}(\beta)=1$ (clause $(B2)$ for being a $C_{n}$-bivaluation);
\item $\mathsf{b}(\alpha\wedge\beta)=1$ if and only if $\mathsf{b}(\alpha)=1$ and $\mathsf{b}(\beta)=1$ (clause $(B1)$);
\item $\mathsf{b}(\alpha\rightarrow\beta)=1$ if and only if $\mathsf{b}(\alpha)=0$ or $\mathsf{b}(\beta)=1$ (clause $(B3)$).
\end{enumerate}
If $\mathsf{b}(\alpha)=0$, then $\nu(\alpha)_{1}=0$; since $\nu(\alpha)_{1}\vee\nu(\alpha)_{2}=1$, by definition of $B_{n}$, and $\nu(\neg\alpha)_{1}=\nu(\alpha)_{2}$, by definition of $\tilde{\neg}$, we find that $\mathsf{b}(\neg\alpha)=\nu(\neg\alpha)_{1}=1$, satisfying clause $(B4)$. If $\mathsf{b}(\neg\neg\alpha)=1$, then $\nu(\neg\neg\alpha)_{1}=1$. But $\nu(\neg\neg\alpha)_{1}=\nu(\neg\alpha)_{2}$ and $\nu(\neg\alpha)_{2}\leq \nu(\alpha)_{1}$ since $\nu(\neg\alpha) \in \tilde{\neg}\nu(\alpha)$. Hence $1 \leq \nu(\alpha)_{1}$, that is, $\mathsf{b}(\alpha)=\nu(\alpha)_{1}=1$ and so $(B5)$ is satisfied.
If $\mathsf{b}(\alpha^{n-1})=\mathsf{b}(\neg(\alpha^{n-1}))$, then $\nu(\alpha^{n-1})_{1}=\nu(\neg(\alpha^{n-1}))_{1}=\nu(\alpha^{n-1})_{2}$. By definition of $B_{n}$,  $\nu(\alpha^{n-1})\in I_{n}$. From Lemma~\ref{lem_tech}, $\nu(\alpha)=t^{n}_{n-1}$ and $\nu(\alpha^{n})=F_{n}$, hence $\mathsf{b}(\alpha^{n})=\nu(\alpha^{n})_{1}=0$. Conversely, if $\mathsf{b}(\alpha^{n})=0$, $\nu(\alpha^{n})=F_{n}$ and, from Lemma \ref{lem_tech}, $\nu(\alpha)=t^{n}_{n-1}$. So $\nu(\alpha^{n-1})=t^{n}_{0}$, $\nu(\neg(\alpha^{n-1}))\in D_{n}$ and therefore $\mathsf{b}(\alpha^{n-1})=\nu(\alpha^{n-1})_{1}=1=\nu(\neg(\alpha^{n-1}))_{1}=\mathsf{b}(\neg(\alpha^{n-1}))$.
Thus, $(B6)_{n}$ is satisfied. If $\mathsf{b}(\alpha)=\mathsf{b}(\neg\alpha)$ then $\nu(\alpha)_{1}=\nu(\neg\alpha)_{1}=\nu(\alpha)_{2}=1$, so $\nu(\alpha)\in I_{n}$. By Lemma \ref{lem_tech},  $\nu(\alpha^{1})\neq T_{n}$ and so $\mathsf{b}(\neg(\alpha^{1}))=\nu(\neg(\alpha^{1}))_{1}=\nu(\alpha^{1})_{2}=1$. Conversely, if $\mathsf{b}(\neg(\alpha^{1}))=1$ then $\nu(\neg(\alpha^{1}))_{1}=\nu(\alpha^{1})_{2}=1$, hence $\nu(\alpha^{1}) \neq T_{n}$. From this, $\nu(\alpha)\in I_{n}$ and so  $\nu(\neg\alpha)\in D_{n}$. Thus, $\mathsf{b}(\alpha)=\nu(\alpha)_{1}=1$ and $\mathsf{b}(\neg\alpha)=\nu(\neg\alpha)_{1}=1$ and $(B7)$ holds.
If $\mathsf{b}(\alpha)\neq\mathsf{b}(\neg\alpha)$ and $\mathsf{b}(\beta)\neq\mathsf{b}(\neg\beta)$, $\nu(\alpha)_{1}\neq\nu(\neg\alpha)_{1}=\nu(\alpha)_{2}$ and $\nu(\beta)_{1}\neq\nu(\neg\beta)_{1}=\nu(\beta)_{2}$, meaning $\nu(\alpha), \nu(\beta)\in \{F_{n}, T_{n}\}$. From the tables for $\#\in\{\vee, \wedge, \rightarrow\}$ we see that $\nu(\alpha\#\beta)\in\{F_{n}, T_{n}\}$, and so $\mathsf{b}(\alpha\#\beta)\neq \mathsf{b}(\neg(\alpha\#\beta))$. Hence clause $(B8)$ holds, which finishes the proof.
\end{proof}

\begin{lemma} \label{lem-compl-Cn}
	Let $\mathsf{b}$ be a $C_n$-bivaluation. Then, the mapping $\nu:\bF(\Sigma,\matV) \to B_n$ given by $\nu(\alpha):=(\mathsf{b}(\alpha),\mathsf{b}(\neg\alpha),\mathsf{b}(\alpha^1),\ldots,\mathsf{b}(\alpha^{n-1}))$  is  a valuation in $\matF_{C_n}$ such that, for every formula $\alpha$, $\mathsf{b}(\alpha)=1$ iff $\nu(\alpha) \in D_n$.
\end{lemma}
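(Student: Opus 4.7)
The statement contains four claims bundled together: that $\nu(\alpha)$ actually lands in $B_{n}$, that $\nu$ is a multialgebra homomorphism, that $\nu$ meets the restrictions defining $\mathcal{F}_{C_{n}}$, and that $\mathsf{b}(\alpha)=1$ iff $\nu(\alpha)\in D_{n}$. The last of these is immediate from $\nu(\alpha)_{1}=\mathsf{b}(\alpha)$ together with $D_{n}=\{z\in B_{n}\ :\ z_{1}=1\}$, so I would dispose of it in one line. The remaining three are verified by reading off the appropriate bivaluation clauses of Definition~\ref{bival-def}.

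For membership in $B_{n}$, the goal is to show that no two coordinates of $\nu(\alpha)$ are $0$. If $\mathsf{b}(\alpha^{k})=0$ for some $1\leq k\leq n-1$, then $(B4)$ applied contrapositively to $\alpha^{k}=\neg(\alpha^{k-1}\wedge\neg\alpha^{k-1})$ forces $\mathsf{b}(\alpha^{k-1}\wedge\neg\alpha^{k-1})=1$, hence $\mathsf{b}(\alpha^{k-1})=\mathsf{b}(\neg\alpha^{k-1})=1$ by $(B1)$; an induction using $(B5)$ applied to $\neg\alpha^{j}=\neg\neg(\alpha^{j-1}\wedge\neg\alpha^{j-1})$ then propagates the value $1$ all the way down to $\mathsf{b}(\alpha)$, $\mathsf{b}(\neg\alpha)$ and every intermediate $\mathsf{b}(\alpha^{j})$ with $j<k$. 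This is exactly the ``at most one zero coordinate'' characterization of $B_{n}$.

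To show $\nu$ is a homomorphism, I would treat each connective separately. For $\#\in\{\wedge,\vee,\to\}$ the first coordinate of $\nu(\alpha\#\beta)$ equals $\mathsf{b}(\alpha)\#\mathsf{b}(\beta)$ by $(B1)$--$(B3)$, which is precisely the defining constraint of $\tilde{\#}$ on the non-Boolean branch. When both $\nu(\alpha),\nu(\beta)\in Boo_{n}$, I need the image to stay in $Boo_{n}$; here $(B8)$ yields $\mathsf{b}(\alpha\#\beta)\ne\mathsf{b}(\neg(\alpha\#\beta))$, and then iterating $(B7)$ together with the contrapositive of $(B4)$ gives $\mathsf{b}((\alpha\#\beta)^{j})=1$ and $\mathsf{b}(\neg(\alpha\#\beta)^{j})=0$ for every $j\geq 1$, so all remaining coordinates are $1$. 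For $\tilde{\neg}$, the requirement $\nu(\neg\alpha)_{1}=\nu(\alpha)_{2}$ is trivial, and $\nu(\neg\alpha)_{2}\leq\nu(\alpha)_{1}$ is exactly $(B5)$.

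For $\nu\in\mathcal{F}_{C_{n}}$, if $\nu(\alpha)=t^{n}_{0}$ then $\mathsf{b}(\alpha)=\mathsf{b}(\neg\alpha)=1$ and $\mathsf{b}(\alpha^{1})=0$, so $\gamma:=\alpha\wedge\neg\alpha$ satisfies $\mathsf{b}(\gamma)=1$, $\mathsf{b}(\neg\gamma)=\mathsf{b}(\alpha^{1})=0$, and the Boolean-propagation argument above (using $(B7)$ and $(B4)$) forces $\mathsf{b}(\gamma^{j})=1$ for $j\geq 1$, so $\nu(\gamma)=T_{n}$. If $\nu(\alpha)=t^{n}_{k}$ with $1\leq k\leq n-1$, then the first three coordinates of $\nu(\alpha)$ are all $1$, which immediately gives $\nu(\alpha\wedge\neg\alpha)_{1}=\nu(\alpha\wedge\neg\alpha)_{2}=1$ and hence membership in $I_{n}$. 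The identity $\nu(\alpha^{1})=(\mathsf{b}(\alpha^{1}),\mathsf{b}(\neg\alpha^{1}),\mathsf{b}(\alpha^{2}),\ldots,\mathsf{b}(\alpha^{n}))$ makes the equality $\nu(\alpha^{1})=t^{n}_{k-1}$ a bookkeeping exercise; for $k\leq n-2$ the $0$ originally located in position $k+3$ of $\nu(\alpha)$ slides one place to the left, while $(B7)$ supplies $\mathsf{b}(\neg\alpha^{1})=1$. The only delicate case, and the one I expect to be the main obstacle, is $k=n-1$, where $\nu(\alpha)$ has no zero coordinate and I need to produce one at position $n+1$ of $\nu(\alpha^{1})$, i.e.\ to show $\mathsf{b}(\alpha^{n})=0$. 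This is precisely where axiom $(B6)_{n}$ is crucial: iterating $(B7)$ from $\mathsf{b}(\alpha)=\mathsf{b}(\neg\alpha)=1$ propagates to $\mathsf{b}(\alpha^{n-1})=\mathsf{b}(\neg\alpha^{n-1})=1$, and then $(B6)_{n}$ yields $\mathsf{b}(\alpha^{n})=0$, completing the verification.
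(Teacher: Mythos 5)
Your proposal is correct and follows essentially the same route as the paper's proof: a coordinate-wise verification of the homomorphism conditions against clauses $(B1)$--$(B8)$, followed by a direct check of the two defining conditions of $\matF_{C_n}$. If anything, you supply more detail than the paper at the two points it glosses over --- the ``at most one zero coordinate'' proof that $\nu(\alpha)\in B_n$ (which the paper dismisses as clear) and the $k=n-1$ case of $\nu(\alpha^{1})=t^{n}_{k-1}$, where your explicit iteration of $(B7)$ feeding into $(B6)_n$ to obtain $\mathsf{b}(\alpha^{n})=0$ is exactly the argument hidden behind the paper's one-line assertion that $\mathsf{b}((\alpha^{1})^{k})=\mathsf{b}(\alpha^{k+1})=0$.
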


\begin{proof} Clearly $\nu(\alpha) \in B_n$ for every $\alpha$.
First, we show $\nu$ is a homomorphism. We have that $\nu(\neg\alpha)=(\mathsf{b}(\neg\alpha), \mathsf{b}(\neg\neg\alpha), \ldots)$, hence: if $\nu(\alpha)=T_{n}$, $\mathsf{b}(\neg\alpha)=0$ and therefore $\nu(\neg\alpha)=F_{n}$; if $\nu(\alpha)\in I_{n}$, $\mathsf{b}(\neg\alpha)=1$, implying $\nu(\neg\alpha)\in D_{n}$; finally, if $\nu(\alpha)=F_{n}$, $\mathsf{b}(\alpha)=0$ and $\mathsf{b}(\neg\alpha)=1$ and from $(B5)$, $\mathsf{b}(\neg\neg\alpha)=0$, hence $\nu(\neg\alpha)=T_{n}$. Thus, $\nu(\neg\alpha)\in \tilde{\neg}\nu(\alpha)$.

Note that $\nu(\alpha\vee\beta)=(\mathsf{b}(\alpha\vee\beta), \mathsf{b}(\neg(\alpha\vee\beta)), \ldots )$. If either $\nu(\alpha)$ or $\nu(\beta)$ equals $T_{n}$ and both are Boolean-valued, either $\mathsf{b}(\alpha)$ or $\mathsf{b}(\beta)$ equals $1$. Hence, $\mathsf{b}(\alpha\vee\beta)=1$ by $(B2)$. Since $\mathsf{b}(\alpha)\neq\mathsf{b}(\neg\alpha)$ and $\mathsf{b}(\beta)\neq\mathsf{b}(\neg\beta)$ then, by $(B8)$, $\mathsf{b}(\alpha\vee\beta)\neq\mathsf{b}(\neg(\alpha\vee\beta))$. Then, $\nu(\alpha\vee\beta)=T_{n}$. If $\nu(\alpha)$ or $\nu(\beta)$ is in $I_{n}$, we have that either $\mathsf{b}(\alpha)=1$ or $\mathsf{b}(\beta)=1$ and, by $(B2)$, $\mathsf{b}(\alpha\vee\beta)=1$, hence $\nu(\alpha\vee\beta)\in D_{n}$. If $\nu(\alpha)=\nu(\beta)=F_{n}$, $\mathsf{b}(\alpha)=\mathsf{b}(\beta)=0$ and so $\mathsf{b}(\alpha\vee\beta)=0$, by $(B2)$. Thus, $\nu(\alpha\vee\beta)=F_{n}$. In all cases, $\nu(\alpha\vee\beta) \in \nu(\alpha)\,\tilde{\vee}\,\nu(\beta)$.

By definition, $\nu(\alpha\wedge\beta)=(\mathsf{b}(\alpha\wedge\beta), \mathsf{b}(\neg(\alpha\wedge\beta)), \ldots )$. If $\nu(\alpha)=\nu(\beta)=T_{n}$, $\mathsf{b}(\alpha)=\mathsf{b}(\beta)=1$ and $\mathsf{b}(\neg\alpha)=\mathsf{b}(\neg\beta)=0$. By $(B1)$, $\mathsf{b}(\alpha\wedge\beta)=1$, and by $(B8)$, $\mathsf{b}(\neg(\alpha\wedge\beta))=0$. Hence $\nu(\alpha\wedge\beta)=T_{n}$. If $\nu(\alpha)$ or $\nu(\beta)$ equals $F_{n}$, either $\mathsf{b}(\alpha)$ or $\mathsf{b}(\beta)$ equals $0$, and so $\mathsf{b}(\alpha\wedge\beta)=0$, by $(B1)$. Hence $\nu(\alpha\wedge\beta)=F_{n}$. In the remaining cases, when either $\nu(\alpha)$ or $\nu(\beta)$ is in $I_{n}$ and both are designated, one sees that $\mathsf{b}(\alpha)=\mathsf{b}(\beta)=1$ and therefore $\mathsf{b}(\alpha\wedge\beta)=1$. Hence $\nu(\alpha\wedge\beta)\in D_{n}$. In all cases, $\nu(\alpha\wedge\beta)\in \nu(\alpha)\,\tilde{\wedge}\,\nu(\beta)$.

Note that $\nu(\alpha\rightarrow\beta)=(\mathsf{b}(\alpha\rightarrow\beta), \mathsf{b}(\neg(\alpha\rightarrow\beta)), \ldots)$. If $\nu(\alpha)=F_{n}$ or $\nu(\beta)=T_{n}$, and both are Boolean-valued, $\mathsf{b}(\alpha)=0$ or $\mathsf{b}(\beta)=1$, and both $\mathsf{b}(\alpha)\neq\mathsf{b}(\neg\alpha)$ and $\mathsf{b}(\beta)\neq\mathsf{b}(\neg\beta)$. By  $(B3)$ and $(B8)$,  $\mathsf{b}(\alpha\rightarrow\beta)=1$ and $\mathsf{b}(\neg(\alpha\rightarrow\beta))=0$, and so $\nu(\alpha\rightarrow\beta)=T_{n}$. If $\nu(\beta)\in I_{n}$, $\mathsf{b}(\beta)=1$ and so $\mathsf{b}(\alpha\rightarrow\beta)=1$, by $(B3)$. Hence $\nu(\alpha\rightarrow\beta)\in D_{n}$. If $\nu(\beta)=F_{n}$ and $\nu(\alpha)\in D_{n}$, $\mathsf{b}(\beta)=0$ and $\mathsf{b}(\alpha)=1$, hence $\mathsf{b}(\alpha\rightarrow\beta)=0$, by $(B3)$. Thus, $\nu(\alpha\rightarrow\beta)=F_{n}$. Finally, if $\nu(\alpha)\in I_{n}$ and $\nu(\beta)=T_{n}$, $\mathsf{b}(\alpha)=1$ and $\mathsf{b}(\beta)=1$. By $(B3)$, $\mathsf{b}(\alpha\rightarrow\beta)=1$ and therefore $\nu(\alpha\rightarrow\beta)\in D_{n}$. In all cases, $\nu(\alpha\rightarrow\beta)\in\nu(\alpha)\,\tilde{\rightarrow}\,\nu(\beta)$.

It remains to prove that $\nu$ is in $\mathcal{F}_{C_{n}}$. If $\nu(\alpha)=t^{n}_{0}$, we have $\mathsf{b}(\alpha)=\mathsf{b}(\neg\alpha)=1$ (and so $\mathsf{b}(\alpha\wedge\neg\alpha)=1$) and $\mathsf{b}(\neg(\alpha\wedge\neg\alpha))=\mathsf{b}(\alpha^{1})=0$, hence $\nu(\alpha\wedge\neg\alpha)=T_{n}$. If $\nu(\alpha)=t^{n}_{k}$, for $1\leq k\leq n-1$, we have $\mathsf{b}(\alpha)=\mathsf{b}(\neg\alpha)=1$ (meaning $\mathsf{b}(\alpha\wedge\neg\alpha)=1$) and $\mathsf{b}(\neg(\alpha\wedge\neg\alpha))=\mathsf{b}(\alpha^{1})=1$, and so $\nu(\alpha\wedge\neg\alpha)\in I_{n}$. Furthermore, $\mathsf{b}((\alpha^{1})^{k})=\mathsf{b}(\alpha^{k+1})=0$, from the fact that $\nu(\alpha)=t^{n}_{k}$. But $\mathsf{b}((\alpha^{1})^{k})=0$ implies that $\nu(\alpha^{1})=t^{n}_{k-1}$, which ends the proof.
\end{proof}

From this, the soundness and completeness of $C_n$ with respect to the finite RNmatrix  $\matGM_{C_n}$ is easily obtained.

\begin{theorem} [Soundness and Completeness of $C_n$ w.r.t. $\matGM_{C_n}$]  \label{sound-compl-Cn} \ \\
	Let $\Gamma \cup \{\varphi\} \subseteq \bF(\Sigma,\matV)$. Then: $\Gamma \vdash_{C_n} \varphi$ \ iff \ $\Gamma \vDash_{\matGM_{C_n}}^\mathsf{RN} \varphi$. 
\end{theorem}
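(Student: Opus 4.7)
The plan is to derive this theorem as an immediate consequence of the two translation lemmas (Lemmas~\ref{lem-sound-Cn} and~\ref{lem-compl-Cn}) together with the known soundness and completeness of $C_n$ with respect to bivaluations (Theorem~\ref{comple-Cn-biv}). The heavy lifting has already been done: the hard part is not in this theorem itself but in those lemmas, which establish a bijective-style correspondence (preserving designation) between the bivaluation semantics $\vDash^{\mathbf{2}}_n$ and the RNmatrix semantics $\vDash_{\matGM_{C_n}}^\mathsf{RN}$. Once this correspondence is in hand, both directions of the theorem amount to a routine transport of hypotheses through it.

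For the soundness direction, suppose $\Gamma \vdash_{C_n} \varphi$ and let $\nu \in \matF_{C_n}$ be any valuation with $\nu[\Gamma] \subseteq D_n$. Apply Lemma~\ref{lem-sound-Cn} to build from $\nu$ the bivaluation $\mathsf{b}(\alpha) := \nu(\alpha)_1$; the statement of the lemma asserts $\mathsf{b}(\alpha)=1$ iff $\nu(\alpha)\in D_n$, so $\mathsf{b}[\Gamma] \subseteq \{1\}$. By the soundness half of Theorem~\ref{comple-Cn-biv}, $\mathsf{b}(\varphi) = 1$, whence $\nu(\varphi) \in D_n$. As $\nu$ was arbitrary in $\matF_{C_n}$, this gives $\Gamma \vDash_{\matGM_{C_n}}^\mathsf{RN} \varphi$.

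For the completeness direction, assume $\Gamma \vDash_{\matGM_{C_n}}^\mathsf{RN} \varphi$ and take any $C_n$-bivaluation $\mathsf{b}$ with $\mathsf{b}[\Gamma] \subseteq \{1\}$. Use Lemma~\ref{lem-compl-Cn} to obtain the valuation $\nu(\alpha) := (\mathsf{b}(\alpha), \mathsf{b}(\neg\alpha), \mathsf{b}(\alpha^1), \ldots, \mathsf{b}(\alpha^{n-1}))$, which by the lemma lies in $\matF_{C_n}$ and satisfies $\mathsf{b}(\alpha)=1$ iff $\nu(\alpha)\in D_n$. Thus $\nu[\Gamma]\subseteq D_n$, so by hypothesis $\nu(\varphi)\in D_n$; equivalently, $\mathsf{b}(\varphi)=1$. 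As $\mathsf{b}$ was an arbitrary $C_n$-bivaluation validating $\Gamma$, we conclude $\Gamma \vDash^{\mathbf{2}}_n \varphi$, and the completeness half of Theorem~\ref{comple-Cn-biv} yields $\Gamma \vdash_{C_n} \varphi$.

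There is no real obstacle left at this stage, since the semantic correspondence provided by the lemmas is exact on the first coordinate and both auxiliary results have been proved. The only thing worth double-checking in the write-up is that nothing in the argument requires $\Gamma$ to be finite, so the statement holds for arbitrary sets $\Gamma \cup \{\varphi\} \subseteq \bF(\Sigma,\matV)$ as claimed.
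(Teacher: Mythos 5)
Your proof is correct and follows exactly the paper's own argument: both directions are obtained by transporting a valuation (resp.\ bivaluation) across the correspondences of Lemmas~\ref{lem-sound-Cn} and~\ref{lem-compl-Cn} and then invoking Theorem~\ref{comple-Cn-biv}. Nothing to add.
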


\begin{proof}
First, suppose $\Gamma\vdash_{C_{n}}\varphi$, and take a valuation $\nu\in\mathcal{F}_{C_{n}}$ for which $\nu[\Gamma]\subseteq D_{n}$: from Lemma \ref{lem-sound-Cn} the function $\mathsf{b}:\bF(\Sigma, \mathcal{V})\rightarrow\textbf{2}$, defined by $\mathsf{b}(\alpha)=\nu(\alpha)_{1}$, is a bivaluation which, by hypothesis, satisfies $\mathsf{b}[\Gamma]\subseteq\{1\}$. Given the soundness of $C_{n}$ with respect to bivaluations and the fact that $\Gamma\vdash_{C_{n}}\varphi$, it follows that $\mathsf{b}(\varphi)=1$, and therefore $\nu(\varphi)\in D_{n}$. This shows that $\Gamma \vDash_{\matGM_{C_n}}^\mathsf{RN} \varphi$.

Conversely, suppose that $\Gamma \vDash_{\matGM_{C_n}}^\mathsf{RN} \varphi$ and let $\mathsf{b}$ be a $C_{n}$-bivaluation such that $\mathsf{b}[\Gamma]\subseteq\{1\}$. By Lemma \ref{lem-compl-Cn}, $\nu:\textbf{F}(\Sigma, \mathcal{V})\rightarrow \mathcal{A}_{C_{n}}$ defined by $\nu(\alpha)=(\mathsf{b}(\alpha), \mathsf{b}(\neg\alpha), \mathsf{b}(\alpha^{1}), \ldots , \mathsf{b}(\alpha^{n-1}))$,
is a valuation in $\mathcal{F}_{C_{n}}$ for which $\nu[\Gamma]\subseteq D_{n}$. From $\Gamma \vDash_{\matGM_{C_n}}^\mathsf{RN} \varphi$ it follows that $\nu(\varphi)\in D_{n}$, hence $\mathsf{b}(\varphi)=1$. By completeness of $C_{n}$ with respect to bivaluations, $\Gamma\vdash_{C_{n}}\varphi$.
\end{proof}

\subsection{Row-branching truth-tables for $C_{n}$, with $n \geq 2$}

As in the case for \mbccl, \cila\ and $C_1$, we can obtain decision procedures for $C_{n}$ (for $n \geq 2$) through the RNmatrix $\mathcal{RM}_{C_{n}}$. The idea is the same: given a formula $\varphi$, consider a sequence of all subformulas $\varphi_{1}, \ldots , \varphi_{k}=\varphi$ ordered by complexity (formulas with the same complexity are arranged arbitrarily). Then, it is constructed a row-branching truth-table for $\varphi$, with one column for each $\varphi_{i}$. A complex formula $\varphi_i$ can receive more than one truth-value in a given row, hence that row splits into several new ones, one for each possible value assigned to $\varphi_i$ on that row by the multioperator associated with the main connective of $\varphi_i$. In order to attend the restrictions imposed by $\mathcal{F}_{C_{n}}$, the following rules are necessary: 
\begin{enumerate}
\item if $\varphi_{i}=\varphi_{j}\wedge\neg\varphi_{j}$ for some $1\leq j< i$, and $\varphi_{j}$ takes the value $t^{n}_{0}$ on a row, $\varphi_{i}$ may only take the value $T_{n}$  on that row;
\item if $\varphi_{i}=\varphi_{j}\wedge\neg\varphi_{j}$ for some $1\leq j< i$, and $\varphi_{j}$ takes the value $t^{n}_{k}$ on a row (for $1\leq k\leq n-1$), that row splits assigning to $\varphi_{i}$ all the values of $I_{n}$;
\item if $\varphi_{i}=\neg(\varphi_{j}\wedge\neg\varphi_{j})$ for some $1\leq j< i$, and $\varphi_{j}$ takes the value $t^{n}_{k}$ on a row (for $1\leq k\leq n-1$), $\varphi_{i}$ may only assume the value $t^{n}_{k-1}$ on that row.
\end{enumerate}

A formula $\varphi$ is declared to be valid according to these truth-tables whenever its respective column only contains elements of $D_{n}$. Let us see that this constitutes a decision procedure for $C_n$, for $n \geq 2$. For any set $\Gamma_{0}\subseteq \bF(\Sigma, \mathcal{V})$ closed by subformulas, a function $\nu_{0}:\Gamma_{0}\rightarrow B_{n}$ satisfying
\begin{enumerate}
\item if $\neg\alpha\in\Gamma_{0}$, then $\nu_{0}(\neg\alpha)\in\tilde{\neg}\nu_{0}(\alpha)$;
\item if $\alpha\#\beta\in\Gamma_{0}$, for $\#\in\{\vee, \wedge, \rightarrow\}$, then $\nu_{0}(\alpha\#\beta)\in\nu_{0}(\alpha)\tilde{\#}\nu_{0}(\beta)$;
\item if $\alpha\wedge\neg\alpha\in\Gamma_{0}$ and $\nu_{0}(\alpha)=t^{n}_{0}$, then $\nu_{0}(\alpha\wedge\neg\alpha)=T_{n}$;
\item if $\alpha\wedge\neg\alpha\in\Gamma_{0}$ and $\nu_{0}(\alpha)=t^{n}_{k}$, for $1\leq k\leq n-1$, then $\nu(\alpha\wedge\neg\alpha)\in I_{n}$;
\item if $\alpha^{1}\in\Gamma_{0}$ and $\nu_{0}(\alpha)=t^{n}_{k}$, for $1\leq k\leq n-1$, then $\nu_{0}(\alpha^{1})=t^{n}_{k-1}$,
\end{enumerate}
can be extended to a homomorphism $\nu:\textbf{F}(\Sigma, \mathcal{V})\rightarrow\mathcal{A}_{C_{n}}$ on $\mathcal{F}_{C_{n}}$. The proof of this fact is done by adapting the one for Proposition~\ref{exte-hom} (by ignoring the connective $\circ$) but now, on each  step $k$, it is defined the value $\nu(\alpha)$, for every formula $\alpha$ with complexity $k$, plus the values $\nu(\neg\alpha)$, $\nu(\alpha \land \neg\alpha)$ and $\nu(\alpha^1)$ according to the restrictions in $\mathcal{F}_{C_{n}}$. 

%If $\Gamma_{0}$ is the set of subformulas, proper or not, of a $\varphi$, then a $\nu_{0}$ as just described corresponds to a row of the row-branching truth-table for $\varphi$, the value $\nu_{0}(\varphi_{i})$ being the value appearing on $\nu_{0}$'s respective row, on the column for $\varphi_{i}$. Since we are able to extend every such row to a homomorphism lying on $\mathcal{F}_{C_{n}}$ it is easy to argue that the method is sound and complete. One could, alternatively, define the row-branching truth-table for a $\varphi$ according solely to the Nmatrix $(\mathcal{A}_{C_{n}}, D_{n})$, to only then proceed to exclude unwanted rows, but both methodologies are entirely equivalent.

Given $\varphi$, it can be constructed a (necessarily finite) branching truth-table for $\varphi$ in $C_n$ as indicated above.  Any row of such table corresponds to a function $\nu_0$ as above, where $\Gamma_0$ is the set formed by $\varphi$ together with all of its subformulas. Then, there exists  $\nu$ in $\mathcal{F}_{C_n}$ extending $\nu_0$ and, moreover, any 
homomorphism $\nu$ in $\mathcal{F}_{C_n}$ can be obtained by extending such mappings $\nu_0$: by restricting $\nu$ to $\Gamma_0$, it is obtained a function $\nu_{0}$ whose possible extensions to homomorphisms include $\nu$. Outside $\Gamma_0$, $\nu$ can be defined arbitrarily, while preserving the conditions for being an element of $\mathcal{F}_{C_n}$, since all the information for evaluating $\nu(\varphi)$ is contained in $\Gamma_0$. Thus,  $\varphi$ is valid in $\mathcal{RM}_{C_n}$ iff  the branching table for $\varphi$ assigns a designated value to $\varphi$ on each  row.

\section{Tableau systems for $C_n$, with $n \geq 1$}\label{TableauxforCn}

\subsection{Tableaux for $C_1$, \cila\ and \mbccl} \label{tableauxC1}

In the previous sections we have proved that RNmatrices constitute a decision procedure for the logics $C_n$, as well as for \cila\ and \mbccl, all of them uncharacterizable by finite Nmatrices. However, checking theoremhood in $n$-valued Nmatrices, for $n \geq 3$, can be a difficult task (as observed above, the RNmatrices  decide theoremhood by means of the tables constructed from the corresponding Nmatrices, of which some rows are then deleted). In this section we introduce a sound and complete tableaux system for $C_1$ constructed from the RNmatrix $\matGM_{C_1}=(\mathcal{A}_{C_1}, D, \mathcal{F}_{C_1})$ for $C_1$ introduced in Definition~\ref{defRNC1}.
In Section~\ref{tableauxCn}  we will present tableau systems for $C_n$, for $n \geq 2$, by following the same approach.  The technique for constructing the tableau systems presented here is based on the forthcoming paper~\cite{con:far:per:21}.\footnote{Recently, \cite{Pawlowski} introduced a general method for obtaining tableaux-like proof systems from a finite-valued Nmatrix. Our approach, closely related to~\cite{con:far:per:21}, follows a different direction.}   

%From now on, we will consider labelled formulas of the form $\bar\laL(\varphi)$, for $\laL \in \{T,t,F\}$ and $\varphi \in \bF(\Sigma,\matV)$. To make the notation lighter, from now on we will write $\laL$ instead of $\bar\laL$, where the context allows to avoid confusion.

Along this section, we will consider labelled formulas of the form $\laL(\varphi)$, for $\laL \in \{\laT,\lat,\laF\}$ and $\varphi \in \bF(\Sigma,\matV)$. The label $\laL$ represents the truth-value $L$ and in order to make the distinction between label and its truth value clearer, we use slightly different fonts.

\begin{defi} [Labelled tableaux system for $C_1$] \label{deftablT1} Let $\bbT_1$ be a labelled tableaux system for $C_1$ defined by the following rules:\footnote{The symbol $\mid$ on the consequence of the rules denotes branching, where $\mid$ separates the different branches created by the rule.}

$$
\begin{array}{ccc}
\displaystyle \frac{\laT(\neg \varphi)}{\laF(\varphi) \mid \lat(\varphi)} &  \displaystyle \frac{\lat(\neg \varphi)}{\lat(\varphi)}  &  \displaystyle \frac{\laF(\neg \varphi)}{\laT(\varphi)} \\[2mm]
&&\\[2mm]
%\end{array}$$
%%%%%%
%$$
%\begin{array}{ccc}
\displaystyle \frac{\laT(\varphi \land \psi)}{\begin{array}{c|c|c|c}\laT(\varphi) & \laT(\varphi) & \lat(\varphi) & \lat(\varphi) \\ \laT(\psi) & \lat(\psi) & \laT(\psi) & \lat(\psi)\end{array}} 
&  \displaystyle \frac{\lat(\varphi \land \psi)}{\begin{array}{c|c|c}\laT(\varphi) & \lat(\varphi) & \lat(\varphi) \\ \lat(\psi) & \laT(\psi) & \lat(\psi)\end{array}} &  \displaystyle \frac{\laF(\varphi \land \psi)}{\laF(\varphi) \mid \laF(\psi)} \\[2mm]
&&\\[2mm]
%%%%%%
\displaystyle \frac{\laT(\varphi \lor \psi)}{\begin{array}{c|c|c|c}\laT(\varphi) & \lat(\varphi) & \laT(\psi) & \lat(\psi) \end{array}} 
&  \displaystyle \frac{\lat(\varphi \lor \psi)}{\begin{array}{c|c} \lat(\varphi) & \lat(\psi)\end{array}} &  \displaystyle \frac{\laF(\varphi \lor \psi)}{\begin{array}{c}\laF(\varphi)\\ \laF(\psi) \end{array}} \\[2mm]
&&\\[2mm]
%%%%%%
\displaystyle \frac{\laT(\varphi \to \psi)}{\begin{array}{c|c|c}\laF(\varphi) & \laT(\psi) & \lat(\psi)\end{array}} 
&  \displaystyle \frac{\lat(\varphi \to \psi)}{\begin{array}{c|c}\lat(\varphi) & \lat(\psi) \\ \laT(\psi) & \end{array}} &  \displaystyle \frac{\laF(\varphi \to \psi)}{\begin{array}{c|c}\laT(\varphi) & \lat(\varphi) \\ \laF(\psi) & \laF(\psi)\end{array}} \\[2mm]
%&&\\[2mm]
\end{array}
$$
A branch $\theta$ of a tableau in $\bbT_1$ for a signed formula $\laL(\varphi)$ is said to be {\em closed} if it contains two signed formulas $\laL(\varphi)$ and $\laL'(\varphi)$ such that $\laL\neq\laL'$, or if it contains a signed formula $\lat(\psi \land \neg \psi)$.  A branch $\theta$ is {\em complete} if, for every signed formula $\laL(\psi)$ occurring in $\theta$, $\theta$ contains all the formulas of one of the branches resulting from the application of the tableau rule for $\laL(\psi)$.\footnote{Observe that, for every non-atomic signed formula $\laL(\psi)$, there exists one and only one rule in $\bbT_1$ applicable to $\laL(\psi)$.} A complete branch is {\em open} if it is not closed.
A tableau in $\bbT_1$ is {\em closed} if it contains a closed branch. 
A tableau  in $\bbT_1$ is {\em completed} if every branch is either closed or complete. A completed tableau is {\em open} if it is not closed.
\end{defi}

\noindent The rules of $\bbT_1$ are analytic, in the sense that, if $\laL'(\psi)$  is a labelled formula appearing in the consequence of a rule for $\laL(\varphi)$, then $\psi$ is a strict subformula of $\varphi$. Because of this,  any tableau starting with $\laL(\varphi)$ will  be completed in a finite number of steps, thus producing a decision procedure for $C_1$, as it will be shown below. The proof of soundness and completeness of $\bbT_1$ w.r.t. the RNmatrix $\matGM_{C_1}$ (hence, w.r.t. $C_1$) to be presented here closely follows the lines of  the  book~\cite{Smullyan}, a standard reference for tableaux systems for classical logic.

\begin{defi} \label{derivT1} A formula $\varphi$ over $\Sigma$ is said to be {\em provable} by tableaux in $\bbT_1$, denoted by  $\vdash_{\bbT_1} \varphi$, if there exists a closed tableau in $\bbT_1$ starting from $\laF(\varphi)$. Given a finite set $\Gamma=\{\gamma_1,\ldots,\gamma_n\} \subseteq \bF(\Sigma,\matV)$, $\varphi$ is said to be {\em provable from $\Gamma$} by tableaux in  $\bbT_1$, denoted by  $\Gamma \vdash_{\bbT_1} \varphi$, if $(\gamma_1 \to (\gamma_2 \to \ldots \to(\gamma_n \to \varphi) \ldots ))$ is provable by tableaux in  $\bbT_1$.
\end{defi}

\begin{defi} \label{valsat} Let $\nu$ be  a valuation in $\mathcal{F}_{C_1}$. We say that a signed formula $\laL(\varphi)$ is {\em true} in $\nu$, or $\nu$ {\em satisfies} $\laL(\varphi)$, if $\nu(\varphi)=L$; otherwise, it is {\em false}  in $\nu$. A branch $\theta$ of a tableau $\calT$ is {\em true under $\nu$}, or $\nu$ {\em satisfies} $\theta$, if every signed formula occurring in $\theta$ is true in $\nu$. A tableau $\calT$ is {\em true under $\nu$}, or $\nu$ {\em satisfies} $\calT$,  if some branch of it is true under $\nu$.
\end{defi}

\begin{remark} \label{closed-unsat} Observe that, if $\nu \in \mathcal{F}_{C_1}$ satisfies $\laL(\varphi)$, then $\laL(\varphi) \neq \lat(\psi \land \neg\psi)$. On the other hand, by definition $\nu$ cannot satisfy a branch $\theta$ containing $\laL(\varphi)$ and $\laL'(\varphi)$ for $\laL\neq\laL'$. From this, a closed branch is unsatisfiable,  and so is a closed tableau.
\end{remark} 

\begin{lemma} \label{satrulTC1} Let $\laL(\varphi)$ be a signed formula  where $\varphi$ is non-atomic, and let $R$ be the unique rule in $\bbT_1$ applicable to it. If $\nu$ is a valuation in $\mathcal{F}_{C_1}$ satisfying  $\laL(\varphi)$, then $\nu$ satisfies all the formulas of at least one of the branches resulting from the application of the tableau rule $R$ for $\laL(\varphi)$.
\end{lemma}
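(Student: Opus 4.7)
The plan is to prove Lemma~\ref{satrulTC1} by direct case analysis on the main connective of $\varphi$ and the label $\laL$, for a total of twelve cases (three labels times four connectives). In every case, since $\nu$ is a homomorphism of multialgebras, one has $\nu(\varphi) \in \tilde{\#}(\nu(\psi))$ in the unary case $\varphi = \#\psi$, and $\nu(\varphi) \in \nu(\psi_1)\,\tilde{\#}\,\nu(\psi_2)$ in the binary case $\varphi = \psi_1\,\#\,\psi_2$. Hence the assumption $\nu(\varphi) = L$ pins down the set of possible values of the immediate subformulas to those tuples $(a,b)$ (or $a$) for which $L \in a\,\tilde{\#}\,b$ (resp. $L \in \tilde{\#}(a)$), a constraint that can be read off the multioperation tables of $\mathcal{A}_{C_1}$, i.e., the tables for $\mathcal{A}_{\cila}$ from Subsection~\ref{Cila} restricted to $\Sigma$. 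It remains only to match each admissible tuple against one of the branches of the applicable rule $R$.

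As a representative sample, take $\laL = \laT$ and $\varphi = \neg\psi$: the table for $\tilde{\neg}$ gives $T \in \tilde{\neg}(a)$ iff $a \in \{F, t\}$, which is precisely the content of the two branches $\laF(\psi) \mid \lat(\psi)$. For $\laL = \lat$ and $\varphi = \psi_1 \to \psi_2$, inspection of $\tilde{\to}$ shows $t \in a\,\tilde{\to}\,b$ exactly when $(a,b) \in \{(F,t),(t,t),(t,T),(T,t)\}$; the branch $\{\lat(\psi_1),\laT(\psi_2)\}$ accounts for $(t,T)$, while the branch $\{\lat(\psi_2)\}$ accounts for the remaining three pairs (all of which have $b = t$, which is all that branch requires). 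The other ten cases for $\tilde{\land}$, $\tilde{\lor}$, and $\tilde{\to}$ with the other labels are handled identically: compute the preimage of $L$ under the multioperation from the table, and observe that it is partitioned (possibly with overlap) by the branches of the rule.

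No genuine obstacle is expected. The rules of $\bbT_1$ are designed so that the branches exactly enumerate the preimages of each possible output value under the relevant multioperation, so the match is by construction. The restriction defining $\mathcal{F}_{C_1}$, namely $\nu(\alpha) = t \Rightarrow \nu(\alpha \land \neg\alpha) = T$, is not needed for this lemma, because we are carrying out only the forward decomposition step: we use $\nu(\varphi) = L$ to read off the permitted values of the subformulas. That restriction plays its role elsewhere, in particular in justifying the closure clause $\lat(\psi \land \neg\psi)$ via Remark~\ref{closed-unsat}. The only (minor) bookkeeping effort lies in the disjunction and implication rules, where several input pairs collapse to the same labelled output.
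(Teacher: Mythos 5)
Your proposal is correct and matches the paper's intent exactly: the paper's proof is a one-line appeal to the definitions of the rules of $\bbT_1$, the multioperators of $\mathcal{A}_{C_1}$, and the notion of satisfaction, and your case-by-case verification (including the observation that the restriction defining $\mathcal{F}_{C_1}$ is not needed here but only for the closure clause via Remark~\ref{closed-unsat}) is precisely the routine check the paper leaves implicit.
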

\begin{proof}
It is immediate from the definition of the tableau rules  in $\bbT_1$, the definition of the multioperators in $\mathcal{A}_{C_1}$, and by  Definition~\ref{valsat}.
\end{proof}

\begin{theorem} [Soundness of $\bbT_1$ w.r.t. $C_1$] \label{soundTC1}
Let $\Gamma \cup \{\varphi\} \subseteq \bF(\Sigma,\matV)$ be a finite set of formulas. Then, 
if $\Gamma \vdash_{\bbT_1} \varphi$, it follows that $\Gamma \vDash_{\matGM_{C_1}}^\mathsf{RN} \varphi$. 
\end{theorem}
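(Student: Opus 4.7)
The plan is to reduce the statement to the case $\Gamma = \emptyset$ by a semantic deduction argument, and then to prove the remaining soundness claim by a standard Smullyan-style induction on the construction of a tableau, using Lemma~\ref{satrulTC1} as the key preservation step.

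First I would establish a semantic modus ponens for $\matGM_{C_1}$: if $\nu \in \mathcal{F}_{C_1}$ with $\nu(\alpha) \in D$ and $\nu(\alpha \to \beta) \in D$, then $\nu(\beta) \in D$. This follows by inspection of the implication table of $\mathcal{A}_{C_1}$, since $T \,\tilde{\to}\, F = \{F\}$ and $t \,\tilde{\to}\, F = \{F\}$, so $\nu(\beta) = F$ together with $\nu(\alpha) \in D$ would force $\nu(\alpha \to \beta) = F \notin D$. Writing $\Gamma = \{\gamma_1, \ldots, \gamma_n\}$ and $\chi := (\gamma_1 \to (\gamma_2 \to \ldots \to (\gamma_n \to \varphi) \ldots))$, iterated modus ponens then shows that $\vDash_{\matGM_{C_1}}^\mathsf{RN} \chi$ implies $\Gamma \vDash_{\matGM_{C_1}}^\mathsf{RN} \varphi$, so it suffices to prove that $\vdash_{\bbT_1} \chi$ entails $\vDash_{\matGM_{C_1}}^\mathsf{RN} \chi$.

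For that, I would argue the contrapositive. Suppose $\chi$ is not valid in $\matGM_{C_1}$; then there is some $\nu \in \mathcal{F}_{C_1}$ with $\nu(\chi) = F$, so $\nu$ satisfies the root $\laF(\chi)$ of any tableau originating there. The central step is an induction on the construction of a tableau $\calT$ in $\bbT_1$ whose root is $\laF(\chi)$, showing that every such $\calT$ is true under $\nu$ in the sense of Definition~\ref{valsat}. The base case is the initial one-node tree and is immediate. For the inductive step, let $\calT'$ be obtained from a satisfied tableau $\calT$ by applying a rule $R$ to a signed formula $\laL(\psi)$ lying on some branch $\theta'$; let $\theta$ be a branch of $\calT$ true under $\nu$. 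If $\theta' \neq \theta$ then $\theta$ survives unchanged in $\calT'$, so $\calT'$ is still satisfied; if $\theta' = \theta$ then $\nu$ satisfies $\laL(\psi)$, and Lemma~\ref{satrulTC1} guarantees that at least one of the branches of $\calT'$ extending $\theta$ via $R$ is true under $\nu$.

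The conclusion comes from Remark~\ref{closed-unsat}: a closed tableau is satisfied by no valuation of $\mathcal{F}_{C_1}$. Combined with the inductive preservation, no completed tableau starting from $\laF(\chi)$ can be closed, contradicting $\vdash_{\bbT_1} \chi$. The main potential obstacle is Lemma~\ref{satrulTC1} itself, which is already delivered by the excerpt; a subtlety worth flagging is the role of the closure condition $\lat(\psi \land \neg\psi)$, which is essential precisely because every $\nu \in \mathcal{F}_{C_1}$ with $\nu(\psi) = t$ forces $\nu(\psi \land \neg\psi) = T$, so that signed formula cannot be satisfied by any legal valuation and thus closes the branch legitimately. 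Once these ingredients are in place, the argument becomes the usual Smullyan satisfiability-preservation induction adapted to the three-label, $\mathcal{F}_{C_1}$-restricted setting.
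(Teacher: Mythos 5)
Your proof is correct and follows essentially the same route as the paper: reduce to the empty-premise case, then run the Smullyan-style satisfiability-preservation induction on the tableau construction using Lemma~\ref{satrulTC1}, concluding via Remark~\ref{closed-unsat} that no completed tableau for $\laF(\varphi)$ can close. The only (harmless) divergence is that you justify the reduction by a direct semantic modus ponens read off the $\tilde{\to}$ table, where the paper instead appeals to the deduction metatheorem for $\vDash_{\matGM_{C_1}}^\mathsf{RN}$ inherited from $C_1$ via Theorem~\ref{souncompRNC1}.
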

\begin{proof}
By Definition~\ref{derivT1}, and since $\vDash_{\matGM_{C_1}}^\mathsf{RN}$  satisfies the deduction metatheorem (by  soundness and completeness w.r.t. $C_1$), it suffices to prove the result for $\Gamma=\emptyset$. A completed tableau $\calT$ for $\laF(\varphi)$ is obtained by means of a finite sequence of tableaux $\calT_0, \, \ldots, \, \calT_k=\calT$. Thus, $\calT_0$ only contains $\laF(\varphi)$ and, for every $0 \leq n \leq k-1$, $\calT_{n+1}$ is obtained from $\calT_n$ by applying some rule of $\bbT_1$ to a signed formula $\laL(\psi)$ that has not yet been used,  occurring in a branch $\theta$ of $\calT_n$.  Now, fix  $\nu \in \mathcal{F}_{C_1}$.\\
{\bf Fact:}  If $\nu$ satisfies $\calT_n$ then it also  satisfies $\calT_{n+1}$, for every $0 \leq n \leq k-1$.\\ 
Indeed, if $\nu \in \mathcal{F}_{C_1}$ satisfies $\calT_n$ then it satisfies some branch $\theta^\prime$ of $\calT_n$. If $\theta=\theta^\prime$ then, in particular, $\nu$ satisfies $\laL(\psi)$. Thus, $\nu$ satisfies all the formulas of at least one of the branches resulting from the application of the tableau rule $R$ for $\laL(\varphi)$, by Lemma~\ref{satrulTC1}, which implies that $\nu$ satisfies at least one of the branches resulting from the expansion of $\theta$ by applying $R$ to  $\laL(\psi)$. That is, $\nu$ satisfies $\calT_{n+1}$. On the other hand, if  $\theta\neq \theta^\prime$ then $\theta^\prime$ is still a branch of  $\calT_{n+1}$, and so  $\calT_{n+1}$ contains a branch satisfied by $\nu$. Then, $\nu$ satisfies $\calT_{n+1}$ also in this case. This proves the {\bf Fact}.

%  and so $\nu$ satisfies $\calT_{n+1}$, since the latter was obtained from $
%\calT_n$ by applying a rule $R$ of $\bbT_1$ to an occurrence of $\laL(\psi)$ in $\theta$.

Finally, suppose that $\nvDash_{\matGM_{C_1}}^\mathsf{RN} \varphi$. This means that there exists some  $\nu \in \mathcal{F}_{C_1}$ such that $\nu(\varphi)=F$. Hence, $\nu$ satisfies   $\laF(\varphi)$ and so, by the {\bf Fact}, $\nu$ satisfies any completed tableau $\calT$ for $\laF(\varphi)$. By Remark~\ref{closed-unsat}, $\calT$ cannot be  closed. That is, every completed tableau for $\laF(\varphi)$ is open, and so  $\not\vdash_{\bbT_1} \varphi$.
\end{proof}

\

\noindent The proof of completeness require the use of Hintikka sets.

\begin{defi} \label{hintikkaTC1}
A nonempty set $\Gamma$ of signed formulas over $\Sigma$ is a {\em Hintikka set} for $\bbT_1$ if it satisfies the following conditions:
\begin{enumerate}
\item If $\laL(\varphi)$ and $\laL'(\varphi)$ belong to $\Gamma$, then $\laL=\laL'$.
\item For every $\varphi \in \bF(\Sigma,\matV)$, $\lat(\varphi \land \neg \varphi)$ does not belong to $\Gamma$.
\item If $\laT(\neg \varphi)$ belongs to $\Gamma$ then either $\laF(\varphi)$ belongs to $\Gamma$ or $\lat(\varphi)$ belongs to $\Gamma$.
\item If $\lat(\neg \varphi)$ belongs to $\Gamma$, then $\lat(\varphi)$ belongs to $\Gamma$.
\item If $\laF(\neg \varphi)$ belongs to $\Gamma$, then  $\laT(\varphi)$ belongs to $\Gamma$.
\item If $\laT(\varphi \land \psi)$ belongs to $\Gamma$ then: either  $\laT(\varphi)$ and $\laT(\psi)$ belong to $\Gamma$, or $\laT(\varphi)$ and $\lat(\psi)$ belong to $\Gamma$, or $\lat(\varphi)$ and $\laT(\psi)$  belong to $\Gamma$, or  $\lat(\varphi)$ and $\lat(\psi)$  belong to $\Gamma$.
\item If $\lat(\varphi \land \psi)$ belongs to $\Gamma$ then: either $\laT(\varphi)$ and $\lat(\psi)$ belong to $\Gamma$, or $\lat(\varphi)$ and $\laT(\psi)$  belong to $\Gamma$, or  $\lat(\varphi)$ and $\lat(\psi)$  belong to $\Gamma$.
\item If $\laF(\varphi \land \psi)$ belongs to $\Gamma$ then: either $\laF(\varphi)$  belongs to $\Gamma$ or $\laF(\psi)$ belongs to $\Gamma$.
\item If $\laT(\varphi \lor \psi)$ belongs to $\Gamma$ then: either  $\laT(\varphi)$ belongs to $\Gamma$, or $\lat(\varphi)$ belongs to $\Gamma$, or $\laT(\psi)$  belongs to $\Gamma$, or  $\lat(\psi)$  belongs to $\Gamma$.
\item If $\lat(\varphi \lor \psi)$ belongs to $\Gamma$ then: either $\lat(\varphi)$ belongs to $\Gamma$, or  $\lat(\psi)$  belongs to $\Gamma$.
\item If $\laF(\varphi \lor \psi)$ belongs to $\Gamma$ then $\laF(\varphi)$ and  $\laF(\psi)$  belong to $\Gamma$.
\item If $\laT(\varphi \to \psi)$ belongs to $\Gamma$ then: either  $\laF(\varphi)$   belongs to $\Gamma$, or $\laT(\psi)$ belongs to $\Gamma$, or $\lat(\psi)$  belongs to $\Gamma$.
\item If $\lat(\varphi \to \psi)$ belongs to $\Gamma$ then: either $\lat(\varphi)$ and $\laT(\psi)$  belong to $\Gamma$, or  $\lat(\psi)$  belongs to $\Gamma$.
\item If $\laF(\varphi \to \psi)$ belongs to $\Gamma$ then: either $\laT(\varphi)$ and $\laF(\psi)$ belong to $\Gamma$, or $\lat(\varphi)$ and $\laF(\psi)$ belong to $\Gamma$.
\end{enumerate}
\end{defi}

\

\noindent The next step is to show that any Hintikka set is satisfiable in $\mathcal{F}_{C_1}$. In order to do this, let us fix a Hintikka set $\Gamma$ for $\bbT_1$. Let $\Gamma_0 = \{ \varphi \in {\bf F}(\Sigma,\mathcal{V}) \ : \ \laL(\varphi) \in \Gamma\}$, and let $\nu_0:\Gamma_0\rightarrow\{T,t,F\}$ be a function such that $\nu_0(\varphi)=L$ iff  $\laL(\varphi) \in \Gamma$. 
Observe that, by item~1 of Definition~\ref{hintikkaTC1}, $\nu_0$ is well-defined. In principle, it should be possible to define a homomorphism $\nu$ in $\mathcal{F}_{C_1}$ extending $\nu_0$, as it was done in the proof of Proposition~\ref{exte-hom}. However, there is a big difference here: the set $\Gamma_0$ is not necessarily closed under subformulas. Then, it is possible to have $\alpha \notin \Gamma_0$ and $\beta \in \Gamma_0$ such that $\alpha$ is a subformula of $\beta$. In such cases, the value $\nu(\alpha)$ to be assigned  to $\alpha$, which is defined with some degree of arbitrariness, could (in principle) be incompatible with the already given value $\nu_0(\beta)$, provided that $\nu(\beta)$ must coincide with $\nu_0(\beta)$. It will be argued that, since $\Gamma$ is a Hintikka set, no conflict will occur.

Indeed, observe that the only cases in which $\beta \in \Gamma_0$ but $\alpha \not\in\Gamma_0$, for some subformula $\alpha$ of $\beta$, are originated by clauses 8, 9, 10, 12 and 13 from Definition~\ref{hintikkaTC1}. Let us analyze, for instance, clause 9. Thus, suppose that $\laT(\alpha \vee \beta) \in \Gamma$, $\laT(\alpha) \in \Gamma$ but $\laL(\beta) \notin \Gamma$, for every $\laL$. Then, $\alpha \vee \beta \in \Gamma_0$, $\alpha \in \Gamma_0$ but $\beta\notin\Gamma_0$. The values $\nu(\alpha)$ and $\nu(\alpha \vee \beta)$ are automatically given by $\nu_0$. However, the value $\nu(\beta)$ could be arbitrarily defined (under certain restrictions), according  to the procedure given in the proof of Proposition~\ref{exte-hom}. One wonders if some bad choice for the value $\nu(\beta)$, say $a$, could produce the undesired situation $\nu(\alpha \vee \beta) \notin \nu(\alpha) \,\tilde{\vee} \, a = \nu(\alpha) \,\tilde{\vee} \, \nu(\beta)$. Fortunately, this will not be the case: one of the facts that clause 9 reflects  is that, in $\mathcal{A}_{C_1}$, $T \in T \, \tilde{\vee} \, a$ for every $a \in \{T,t,f\}$.  This shows that any choice $a$ for $\nu(\beta)$ will produce  that $\nu(\alpha \vee \beta) \in \nu(\alpha) \,\tilde{\vee} \, a = \nu(\alpha) \,\tilde{\vee} \, \nu(\beta)$, since $\nu(\alpha \vee \beta)= \nu(\alpha)=T$ in this case. Another fact reflected by clause 9 is that, in $\mathcal{A}_{C_1}$, $T \in a \, \tilde{\vee} \, t$ for every $a \in \{T,t,f\}$. Thus, if $\laT(\alpha \vee \beta) \in \Gamma$ and $\lat(\beta) \in \Gamma$, but $\laL(\alpha) \notin \Gamma$  for every $\laL$, then any value $a$ chosen for $\nu(\alpha)$ will produce that $\nu(\alpha \vee \beta) \in a \,\tilde{\vee} \, \nu(\beta) = \nu(\alpha) \,\tilde{\vee} \, \nu(\beta)$, as required. By a similar reasoning, it can be seen that, in spite of $\Gamma_0$  not being closed under subformulas (because of the clauses  from Definition~\ref{hintikkaTC1} above mentioned), it is possible to define $\nu$ in a similar way as it was done in the proof of Proposition~\ref{exte-hom}, without any conflicts. This lead us to the following:

\begin{prop} \label{extenv}
Let $\Gamma$ be a Hintikka set for $\bbT_1$. Let $\Gamma_0 = \{ \varphi \in {\bf F}(\Sigma,\mathcal{V}) \ : \ \laL(\varphi) \in \Gamma\}$, and let $\nu_0:\Gamma_0\rightarrow\{T,t,F\}$ be a function defined as follows: $\nu_0(\varphi)=L$ iff  $\laL(\varphi) \in \Gamma$.
Then, there exists a homomorphism $\nu$ in $\mathcal{F}_{C_1}$ extending $\nu_0$, i.e., such that $\nu(\alpha)=\nu_0(\alpha)$ for every $\alpha \in \Gamma_0$.
\end{prop}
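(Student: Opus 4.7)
The plan is to adapt the inductive construction carried out in the proof of Proposition~\ref{exte-hom}, the difference being that $\Gamma_0$ is in general \emph{not} closed under subformulas. So one must argue that the values prescribed by $\nu_0$ on elements of $\Gamma_0$ remain compatible with the arbitrary choices that will inevitably be made for subformulas lying outside $\Gamma_0$. First I would define $\nu$ by recursion on complexity: for a formula $\alpha$ being processed, I set $\nu(\alpha) = \nu_0(\alpha)$ when $\alpha \in \Gamma_0$ and otherwise pick an arbitrary element of the multi-operation applied to the already-defined values of the immediate subformulas of $\alpha$. At the same stage I preemptively fix $\nu(\neg\alpha)$ and $\nu(\alpha \land \neg\alpha)$ when these formulas are not in $\Gamma_0$, forcing $\nu(\alpha \land \neg\alpha) = T$ whenever $\nu(\alpha) = t$, exactly as in Proposition~\ref{exte-hom}.

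The crux of the argument, and the main obstacle, is the compatibility check: whenever a compound formula $\gamma$ lies in $\Gamma_0$, the prescribed value $\nu_0(\gamma)$ must actually belong to the multi-operation applied to the values $\nu$ assigns to the immediate subformulas of $\gamma$. When all immediate subformulas of $\gamma$ are themselves in $\Gamma_0$, this is a routine match against the tables of $\mathcal{A}_{C_1}$, since clauses 3--14 of Definition~\ref{hintikkaTC1} mirror those tables. The delicate situation is when some immediate subformula of $\gamma$ lies outside $\Gamma_0$; inspection shows this can occur only for clauses 8, 9, 10, 12 and 13. For each such clause I would verify, case by case, that the relevant column (or row) of the multi-table contains $\nu_0(\gamma)$ regardless of what value is assigned to the missing subformula. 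For instance, clause 9 is justified by the observations that $T \in T \,\tilde{\lor}\, a$ and $T \in a \,\tilde{\lor}\, t$ for every $a$; clause 8 by $F \in F \,\tilde{\land}\, a$ and $F \in a \,\tilde{\land}\, F$; and the analogous direct computations dispose of clauses 10, 12 and 13. This is precisely the content sketched in the paragraph preceding the proposition.

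It remains to verify that the resulting homomorphism $\nu$ indeed lies in $\mathcal{F}_{C_1}$, i.e.\ that $\nu(\alpha) = t$ implies $\nu(\alpha \land \neg\alpha) = T$. When $\alpha \land \neg\alpha \notin \Gamma_0$ this is built into the construction. When $\alpha \land \neg\alpha \in \Gamma_0$ and $\nu(\alpha) = t$, then $\alpha \in \Gamma_0$ with $\lat(\alpha) \in \Gamma$; clause~2 rules out $\lat(\alpha \land \neg\alpha) \in \Gamma$, so $\nu_0(\alpha \land \neg\alpha) \in \{T, F\}$. The value $F$ is impossible: clause~8 would then force $\laF(\alpha) \in \Gamma$ or $\laF(\neg\alpha) \in \Gamma$; the first conflicts with $\lat(\alpha) \in \Gamma$ by clause~1, and the second yields $\laT(\alpha) \in \Gamma$ by clause~5, again conflicting with clause~1. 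Hence $\nu_0(\alpha \land \neg\alpha) = T$, as required, completing the construction.
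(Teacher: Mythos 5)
Your proposal is correct and follows essentially the same route as the paper: adapt the inductive construction of Proposition~\ref{exte-hom}, and use the clause-by-clause compatibility check (for clauses 8, 9, 10, 12, 13) sketched in the discussion preceding the proposition to handle subformulas missing from $\Gamma_0$. Your final verification that $\nu\in\mathcal{F}_{C_1}$ when $\alpha\land\neg\alpha\in\Gamma_0$, via clauses 1, 2, 5 and 8, is a correct filling-in of a detail the paper leaves implicit.
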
 
\begin{proof}
As observed above, $\nu_0$ is well-defined. By the considerations above, the proof can be obtained by adapting the one given for Proposition~\ref{exte-hom}, taking into account that $\cons$ does not belong to the signature $\Sigma$. Hence, the valuation $\nu$ can be defined by induction on the complexity of formulas as follows: on each  step $n \geq 0$, it is defined the value $\nu(\alpha)$ for every formula $\alpha$ with complexity $n$, together with the values $\nu(\neg\alpha)$ and $\nu(\alpha \land \neg\alpha)$. The method for defining the function $\nu$ is the one described in the proof of Proposition~\ref{exte-hom}. 
Then, it follows that $\nu:{\bf F}(\Sigma,\mathcal{V})\rightarrow\{T,t,F\}$  is a function which satisfies the desired properties.
\end{proof}

\begin{theorem} [Hintikka's Lemma for $\bbT_1$] \label{hintikka-lemmaTC1} Let $\Gamma$ be a Hintikka set for $\bbT_1$. Then, there exists a homomorphism $\nu$ in $\mathcal{F}_{C_1}$ such that $\laL(\varphi)$ is true in $\nu$ for every $\laL(\varphi) \in \Gamma$.
\end{theorem}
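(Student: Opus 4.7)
The plan is to reduce the statement directly to Proposition~\ref{extenv}, which has already done most of the work. Given the Hintikka set $\Gamma$, I would form the set of underlying formulas $\Gamma_0=\{\varphi \in {\bf F}(\Sigma,\mathcal{V}) \ : \ \laL(\varphi) \in \Gamma \text{ for some label } \laL\}$ and the partial function $\nu_0:\Gamma_0 \to \{T,t,F\}$ defined by $\nu_0(\varphi)=L$ iff $\laL(\varphi) \in \Gamma$. Condition~1 of Definition~\ref{hintikkaTC1} (no formula carries two distinct labels in $\Gamma$) is exactly what is needed to make $\nu_0$ well-defined as a function.

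Next, I would apply Proposition~\ref{extenv} to obtain a homomorphism $\nu \in \mathcal{F}_{C_1}$ such that $\nu(\varphi)=\nu_0(\varphi)$ for every $\varphi \in \Gamma_0$. Once $\nu$ is in hand, the conclusion is essentially immediate: for any $\laL(\varphi)\in\Gamma$, by construction $\varphi\in\Gamma_0$ and $\nu_0(\varphi)=L$, so $\nu(\varphi)=L$, which by Definition~\ref{valsat} says exactly that $\laL(\varphi)$ is true in $\nu$.

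The only point that deserves a brief verification, and which is the closest thing to an obstacle, is to confirm that $\nu_0$ as defined really satisfies the hypotheses invoked in Proposition~\ref{extenv}: namely that values assigned on $\Gamma_0$ respect the multioperations of $\mathcal{A}_{C_1}$ on those subformulas that are themselves in $\Gamma_0$, and that the extra constraint for $\mathcal{F}_{C_1}$ (if $\nu(\alpha)=t$ then $\nu(\alpha\wedge\neg\alpha)=T$) is compatible with $\nu_0$ whenever the relevant formulas lie in $\Gamma_0$. Each of these is a direct translation of one of the Hintikka clauses 3--14 of Definition~\ref{hintikkaTC1}, read off the tables of $\mathcal{A}_{C_1}$; for instance clause~6 corresponds to $T \in T\,\tilde{\wedge}\,T$, $T \in T\,\tilde{\wedge}\,t$, etc., while clause~2 (the absence of $\lat(\psi\wedge\neg\psi)$ from $\Gamma$) together with clauses~3--5 ensures compatibility with the restriction defining $\mathcal{F}_{C_1}$. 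Once these clause-by-clause checks are collected, Proposition~\ref{extenv} applies verbatim and the lemma follows.
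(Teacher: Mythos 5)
Your proposal is correct and follows essentially the same route as the paper: form $\Gamma_0$ and $\nu_0$ (well-defined by clause~1 of Definition~\ref{hintikkaTC1}), invoke Proposition~\ref{extenv} to get an extension $\nu\in\mathcal{F}_{C_1}$, and read off the conclusion. The only cosmetic difference is that your closing ``verification'' is not needed as a separate step, since Proposition~\ref{extenv} as stated already takes a Hintikka set as its hypothesis and the clause-by-clause compatibility discussion is carried out in the paper just before that proposition rather than inside the proof of the lemma.
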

\begin{proof} Let $\Gamma_0 = \{ \varphi \in {\bf F}(\Sigma,\mathcal{V}) \ : \ \laL(\varphi) \in \Gamma\}$, and let $\nu_0:\Gamma_0\rightarrow\{T,t,F\}$ be a function such that $\nu_0(\varphi)=L$ iff  $\laL(\varphi) \in \Gamma$. By Proposition~\ref{extenv}, there exists a homomorphism $\nu$ in $\mathcal{F}_{C_1}$  extending $\nu_0$. This means that $\laL(\varphi)$ is true in $\nu$ for every $\laL(\varphi) \in \Gamma$.
\end{proof}

\begin{prop} \label{open-hintikka} Let $\laL_0(\varphi_0)$  be a signed formula over $\Sigma$.
Let $\theta$ be an open branch of a completed tableau $\calT$ in  $\bbT_1$ for $\laL_0(\varphi_0)$, and let $\Gamma$ be the set of signed formulas occurring in  $\theta$.
Then, $\Gamma$ is a Hintikka set for $\bbT_1$.
\end{prop}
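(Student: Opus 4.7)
The plan is to verify the fourteen defining conditions of a Hintikka set for $\bbT_1$ (Definition~\ref{hintikkaTC1}) one by one, by directly matching each condition against either the openness or the completeness of the branch $\theta$. The two properties used are essentially complementary: openness of $\theta$ handles the ``negative'' conditions (1 and 2), while completeness of $\theta$ handles the ``expansion'' conditions (3 through 14).

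First I would dispose of conditions 1 and 2. Since $\theta$ is open, by Definition~\ref{deftablT1} it is in particular not closed; hence $\theta$ does not contain two signed formulas $\laL(\varphi)$ and $\laL'(\varphi)$ with $\laL \neq \laL'$, which is exactly condition 1 applied to $\Gamma$. Likewise, $\theta$ being not closed means that $\lat(\psi \land \neg \psi) \notin \Gamma$ for any $\psi$, which is condition 2.

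Next I would verify conditions 3--14 uniformly. Fix any non-atomic signed formula $\laL(\varphi) \in \Gamma$; it occurs on $\theta$. Since $\theta$ is open it is in particular complete (for otherwise it would already be closed by being incomplete and non-closed, contradicting the dichotomy in Definition~\ref{deftablT1}; more precisely, open means complete and non-closed). By the completeness clause in Definition~\ref{deftablT1}, $\theta$ contains all the signed formulas of one of the branches resulting from applying the unique tableau rule for $\laL(\varphi)$. A direct inspection of the rules listed in Definition~\ref{deftablT1} shows that each such rule corresponds exactly to the disjunctive case analysis demanded by the matching Hintikka clause among 3--14: for instance, the rule for $\laT(\neg\varphi)$ yields two branches $\laF(\varphi)$ and $\lat(\varphi)$, which is precisely condition 3; the four-branch rule for $\laT(\varphi \land \psi)$ matches condition 6; and so on, through the rule for $\laF(\varphi \to \psi)$ matching condition 14.

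Thus the proof reduces to a finite checklist traversing the fourteen rules of $\bbT_1$ against the fourteen clauses of Definition~\ref{hintikkaTC1}. I do not anticipate any genuine obstacle: the tableau rules were in fact engineered to mirror the Hintikka clauses. The only subtle point, which I would make explicit at the outset, is the equivalence between ``complete and non-closed'' in the sense of Definition~\ref{deftablT1} and the notion of ``open branch''; once this is settled, the remainder is bookkeeping and no genuine induction or case-splitting on formula complexity is required, since each Hintikka clause is verified locally from the occurrence of a single signed formula in $\theta$.
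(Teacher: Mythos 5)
Your proof is correct and follows essentially the same route as the paper's: openness of $\theta$ yields clauses 1 and 2 of Definition~\ref{hintikkaTC1}, and completeness of $\theta$ (which, as you rightly note, is built into the definition of an open branch of a completed tableau) yields clauses 3 through 14 by matching each rule of $\bbT_1$ to its corresponding Hintikka clause. Nothing further is needed.
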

\begin{proof}
Since $\theta$ is open then, by Definition~\ref{deftablT1}, if $\laL(\varphi)$ and $\laL'(\varphi)$ belong to $\Gamma$ then $\laL=\laL'$. In addition, $t(\varphi \land \neg \varphi) \notin\Gamma$. This shows that $\Gamma$ satisfies clauses~1 and~2 of Definition~\ref{hintikkaTC1}. If $\laL(\varphi) \in \Gamma$ for $\varphi$ of the form $\neg\psi$ or $\gamma \,\#\, \psi$, for some $\# \in \{\land,\lor,\to\}$, then, by the tableau rules for $\bbT_1$, and taking into consideration that $\calT$ is a completed tableau, necessarily $\laL(\varphi)$ was used at some stage of the procedure for defining $\theta$. Hence  it is immediate to see that clauses~3-~14  of Definition~\ref{hintikkaTC1} are satisfied.  From this, $\Gamma$ is a Hintikka set for  $\bbT_1$. 
\end{proof}

\begin{coro} \label{sat-branchTC1} Let $\laL(\varphi)$  be a signed formula over $\Sigma$. Let $\theta$ be an open branch of a completed tableau $\calT$ in  $\bbT_1$ for $\laL(\varphi)$, and let $\Gamma$ be the set of signed formulas occurring in  $\theta$. Then, there exists a homomorphism $\nu$ in $\mathcal{F}_{C_1}$ such that $\laL(\varphi)$ is true in $\nu$ for every $\laL(\varphi) \in \Gamma$.
\end{coro}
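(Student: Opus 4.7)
The plan is to simply chain together the two results immediately preceding the corollary. First, I would invoke Proposition~\ref{open-hintikka} applied to the open branch $\theta$ of the completed tableau $\calT$: this guarantees that the set $\Gamma$ of signed formulas occurring in $\theta$ is a Hintikka set for $\bbT_1$ in the sense of Definition~\ref{hintikkaTC1}. No real work is needed here, as the proposition is precisely tailored to this situation.

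Next, I would apply Hintikka's Lemma for $\bbT_1$ (Theorem~\ref{hintikka-lemmaTC1}) to the Hintikka set $\Gamma$ just obtained. This yields a homomorphism $\nu \in \mathcal{F}_{C_1}$ such that every signed formula in $\Gamma$ is true under $\nu$ (in the sense of Definition~\ref{valsat}). Since by construction every $\laL(\varphi)$ occurring in $\theta$ belongs to $\Gamma$, the resulting $\nu$ satisfies each such $\laL(\varphi)$, which is exactly the conclusion demanded by the corollary.

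There is essentially no obstacle here: the statement is a direct composition of the preceding proposition and theorem, so the proof reduces to citing the two. The substantive difficulties (namely, showing that the clauses of the Hintikka set definition mirror the branch-splitting structure of each tableau rule, and showing that even though $\Gamma_0 = \{\varphi : \laL(\varphi) \in \Gamma\}$ may fail to be closed under subformulas no conflict arises when extending $\nu_0$ to a valuation in $\mathcal{F}_{C_1}$) have already been absorbed into Proposition~\ref{open-hintikka} and Theorem~\ref{hintikka-lemmaTC1} respectively. Consequently, my proof proposal is a one-line argument: by Proposition~\ref{open-hintikka}, $\Gamma$ is a Hintikka set; hence, by Theorem~\ref{hintikka-lemmaTC1}, there is a homomorphism $\nu \in \mathcal{F}_{C_1}$ satisfying every $\laL(\varphi) \in \Gamma$, as required.
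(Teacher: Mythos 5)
Your proposal is correct and is exactly the paper's own proof: the corollary is obtained by composing Proposition~\ref{open-hintikka} (the open branch yields a Hintikka set) with Theorem~\ref{hintikka-lemmaTC1} (every Hintikka set is satisfied by some $\nu\in\mathcal{F}_{C_1}$). Nothing further is needed.
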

\begin{proof}
It is a consequence of Proposition~\ref{open-hintikka} and Theorem~\ref{hintikka-lemmaTC1}. 
\end{proof}

\begin{theorem} [Completeness of $\bbT_1$ w.r.t. $C_1$] \label{compleTC1}
Let $\Gamma \cup \{\varphi\} \subseteq \bF(\Sigma,\matV)$ be a finite set of formulas. Then: if $\Gamma \vDash_{\matGM_{C_1}}^\mathsf{RN} \varphi$, it follows that $\Gamma \vdash_{\bbT_1} \varphi$. 
\end{theorem}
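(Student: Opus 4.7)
The plan is to prove completeness by contraposition, reducing the general statement to the single-formula case via the deduction metatheorem and then extracting a countermodel from an open branch of a completed tableau.

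First, since $\vDash_{\matGM_{C_1}}^\mathsf{RN}$ coincides with $\vdash_{C_1}$ by Theorem~\ref{souncompRNC1}, it satisfies the deduction metatheorem. Thus, given a finite $\Gamma=\{\gamma_1,\ldots,\gamma_n\}$ with $\Gamma \vDash_{\matGM_{C_1}}^\mathsf{RN} \varphi$, we have $\vDash_{\matGM_{C_1}}^\mathsf{RN} \psi$ for $\psi:=(\gamma_1 \to (\gamma_2 \to \ldots \to (\gamma_n \to \varphi) \ldots ))$. By Definition~\ref{derivT1}, establishing $\Gamma \vdash_{\bbT_1} \varphi$ amounts to proving that $\laF(\psi)$ has a closed tableau in $\bbT_1$, so it suffices to show: if $\vDash_{\matGM_{C_1}}^\mathsf{RN} \psi$, then $\vdash_{\bbT_1} \psi$.

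I argue the contrapositive. Suppose $\not\vdash_{\bbT_1} \psi$. Start with the tableau consisting solely of $\laF(\psi)$ and apply rules of $\bbT_1$ exhaustively; because every rule decomposes a signed formula into signed \emph{strict subformulas} (i.e., the rules are analytic, as noted right after Definition~\ref{deftablT1}), the procedure terminates, producing a completed tableau $\calT$ for $\laF(\psi)$. By hypothesis $\calT$ is not closed, so at least one branch $\theta$ of $\calT$ is complete and open. Let $\Gamma_\theta$ be the set of signed formulas occurring in $\theta$. By Proposition~\ref{open-hintikka}, $\Gamma_\theta$ is a Hintikka set for $\bbT_1$, and hence by Corollary~\ref{sat-branchTC1} (which packages Hintikka's Lemma, Theorem~\ref{hintikka-lemmaTC1}) there is a valuation $\nu \in \mathcal{F}_{C_1}$ satisfying every signed formula of $\Gamma_\theta$. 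In particular, $\laF(\psi) \in \Gamma_\theta$ gives $\nu(\psi)=F \notin D$, so $\not\vDash_{\matGM_{C_1}}^\mathsf{RN}\psi$, completing the contrapositive.

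The substantive machinery (Hintikka sets, the extension of partial Hintikka valuations to full homomorphisms in $\mathcal{F}_{C_1}$) has already been developed; the only remaining concern is ensuring termination of the tableau construction, which follows from analyticity of the rules because each rule strictly reduces the complexity of the formula being decomposed. Thus no real obstacle arises at this stage; the present theorem is essentially the packaging of the preceding lemmas and the deduction metatheorem inherited from the RNmatrix-based soundness and completeness for $C_1$.
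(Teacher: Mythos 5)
Your proof is correct and follows essentially the same route as the paper: reduce to the case $\Gamma=\emptyset$ via the deduction metatheorem inherited from Theorem~\ref{souncompRNC1}, build a completed tableau for $\laF(\varphi)$ (which exists by analyticity of the rules), and use Proposition~\ref{open-hintikka} together with Corollary~\ref{sat-branchTC1} to extract a countermodel in $\mathcal{F}_{C_1}$ from any open branch. The only difference is presentational (you phrase it as a contrapositive from the start), so nothing substantive to add.
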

\begin{proof} As argued at the beginning of the proof of Theorem~\ref{soundTC1}, it suffices to prove the result for $\Gamma=\emptyset$. Thus, let $\calT$ be a completed tableau in $\bbT_1$ for $\laF(\varphi)$. If $\calT$ has an open branch $\theta$ then the set $\Gamma$ of signed formulas occurring in  $\theta$  is simultaneously satisfiable by a homomorphism $\nu$ in $\mathcal{F}_{C_1}$, by Corollary~\ref{sat-branchTC1}. In particular, $\laF(\varphi)$ is true in $\nu$, which implies that $\nu(\varphi)=F$. This means that $\nvDash_{\matGM_{C_1}}^\mathsf{RN} \varphi$. From this, if $\vDash_{\matGM_{C_1}}^\mathsf{RN} \varphi$, then every completed tableau for $\laF(\varphi)$ in $\bbT_1$ is  closed. Given that it is always possible to construct a completed tableaux for $\laF(\varphi)$ in $\bbT_1$ in a finite number of steps, we obtain the following: if $\vDash_{\matGM_{C_1}}^\mathsf{RN} \varphi$ then there exists a completed closed tableau for $\laF(\varphi)$. That is, $\varphi$ is provable by tableaux in $\bbT_1$.
\end{proof}

\begin{coro} [$\bbT_1$ as a decision procedure for $C_1$ based on $\matGM_{C_1}$] \ \\
If $\varphi$ is valid in $C_1$ then every completed tableau for $\laF(\varphi)$ is closed. If $\varphi$ is not valid in $C_1$, then every completed tableau for $\laF(\varphi)$ is open. In this case, any open branch of any completed tableau for $\laF(\varphi)$ gives us a valuation $\nu$ in $\mathcal{F}_{C_1}$ such that $\nu(\varphi)=F$.
\end{coro}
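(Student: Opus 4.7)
The plan is to derive the corollary as a direct repackaging of Theorems~\ref{soundTC1} and~\ref{compleTC1}, together with Corollary~\ref{sat-branchTC1}, using two short contrapositive arguments. The key observation is that, by Definition~\ref{deftablT1}, a completed tableau is either closed or contains at least one open (complete, non-closed) branch, so ``not closed'' and ``open'' exhaust the possibilities.

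For the first claim, I would assume $\vDash_{\matGM_{C_1}}^\mathsf{RN}\varphi$ and argue by contradiction. Suppose some completed tableau $\calT$ for $\laF(\varphi)$ fails to be closed; then $\calT$ must contain an open branch $\theta$. Applying Corollary~\ref{sat-branchTC1} to $\theta$ yields a homomorphism $\nu\in\mathcal{F}_{C_1}$ satisfying every signed formula of $\theta$, including the root $\laF(\varphi)$. Hence $\nu(\varphi)=F$, contradicting validity in $\matGM_{C_1}$. This shows every completed tableau for $\laF(\varphi)$ is closed.

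For the second claim, I would argue the contrapositive. Assume that some completed tableau for $\laF(\varphi)$ is closed; then by Definition~\ref{derivT1}, $\vdash_{\bbT_1}\varphi$, and by soundness (Theorem~\ref{soundTC1}) we conclude $\vDash_{\matGM_{C_1}}^\mathsf{RN}\varphi$, i.e.\ $\varphi$ is valid in $C_1$. Thus, if $\varphi$ is not valid in $C_1$, no completed tableau for $\laF(\varphi)$ can be closed, and so every completed tableau must be open. Moreover, for any open branch $\theta$ of any such completed tableau, Corollary~\ref{sat-branchTC1} applied to $\theta$ directly furnishes the desired counter-valuation $\nu\in\mathcal{F}_{C_1}$ with $\nu(\varphi)=F$, since $\laF(\varphi)$ occurs in $\theta$ as its root node.

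I do not foresee any real obstacle here, as the analytic work has already been carried out in the preceding results: termination of the tableau construction is guaranteed because the rules are analytic (every labelled formula in a conclusion involves a strict subformula of the premise), existence of a satisfying valuation from an open branch is Corollary~\ref{sat-branchTC1}, and the match with $C_1$-validity is Theorem~\ref{souncompRNC1}. The only mild subtlety worth flagging is that one must invoke Corollary~\ref{sat-branchTC1} rather than only Theorem~\ref{compleTC1}, since the statement quantifies over \emph{every} completed tableau, not just the one produced by the completeness argument; this is precisely what makes the procedure a genuine decision method, both for refuting non-theorems and for exhibiting explicit countermodels.
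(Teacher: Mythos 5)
Your proposal is correct and matches the paper's (implicit) argument: the corollary is stated without proof precisely because it is the assembly of Theorem~\ref{soundTC1}, Theorem~\ref{compleTC1} and Corollary~\ref{sat-branchTC1} that you carry out, together with the observation that a completed tableau which is not closed must contain an open branch whose root is $\laF(\varphi)$. Nothing is missing.
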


\noindent
With minor modifications, it is easy to obtain from $\bbT_1$  sound and complete tableau systems $\bbT_\cila$ and $\bbT_\mbccl$ for \cila\ and \mbccl, respectively. In the case of $\bbT_\cila$, it is enough adding to $\bbT_1$ the following rules:

$$
\begin{array}{cc}
\displaystyle \frac{\laT(\cons \varphi)}{\laT(\varphi) \mid \laF(\varphi)} & \hspace{6mm}  \displaystyle \frac{\laF(\cons \varphi)}{\lat(\varphi)} \\[2mm]
&\\[2mm]
\end{array}
$$

\noindent as well as modifying the definition of closed branch as follows: a branch $\theta$ for a signed formula $\laL(\varphi)$ is said to be {\em closed} in $\bbT_\cila$ if it contains two signed formulas $\laL(\psi)$ and $\laL'(\psi)$ such that $\laL\neq\laL'$, or if it contains a signed formula $\lat(\psi \land \neg \psi)$, or if it contains a signed formula $\lat(\cons\psi)$. In the case of $\bbT_\mbccl$, it is enough modifying the rules of $\bbT_\cila$ according to the multioperators of $\mathcal{A}_{\mbccl}$ (recalling that $\mathcal{A}_{\cila}$ is a submultialgebra of $\mathcal{A}_{\mbccl}$). For instance, the rules for $\cons$ are  defined as follows:

$$
\begin{array}{ccc}
\displaystyle \frac{\laT(\cons \varphi)}{\laT(\varphi) \mid \laF(\varphi)} & \hspace{6mm} \displaystyle \frac{\lat(\cons \varphi)}{\laT(\varphi) \mid \laF(\varphi)} & \hspace{6mm}  \displaystyle \frac{\laF(\cons \varphi)}{\lat(\varphi)} \\[2mm]
&\\[2mm]
\end{array}
$$

\noindent In turn, the definition of closed branch in $\bbT_\mbccl$ is as in  $\bbT_1$.

\subsection{Tableaux for $C_n$, with $n \geq 2$} \label{tableauxCn}

As one would perhaps expect, the tableau systems induced by the RNmatrices for the logics $C_{n}$, for $n\geq 2$, are quite similar to the ones presented for the system $C_{1}$. Here, we will give a brief description of them, without entering in technicals details, given the similarity to $C_{1}$'s case.

Now, a wider universe of labelled formulas $\textsf{L}(\varphi)$ will be considered, with labels $\textsf{L}$ in $\mathbb{B}_{n}=\{\laT_{n}, \lat^{n}_{0}, \ldots , \lat^{n}_{n-1}, \laF_{n}\}$ and formulas in $\textbf{F}(\Sigma, \mathcal{V})$. For simplicity, we will need a slightly more general notation,  in order to deal with  larger tableaux rules. Let $\textsf{X}$ and $\textsf{Y}$ be sets of labels, and consider the rules below. 
$$
\begin{array}{ccc}
\displaystyle \frac{\textsf{L}(\varphi)}{\textsf{L}^\prime(\psi)\mid \textsf{X}(\psi)} &  \displaystyle \frac{\textsf{L}(\varphi)}{\begin{array}{c}\textsf{L}^{\prime}(\gamma) \\ \textsf{X}(\psi)\end{array}}  &  \displaystyle \frac{\textsf{L}(\varphi)}{\begin{array}{c|c|c} \textsf{L}^{\prime}(\gamma) & \textsf{L}^{\prime\prime}(\psi) & \textsf{X}(\gamma)\\ \textsf{Y}(\psi) & \textsf{X}(\gamma) & \textsf{Y}(\psi)\end{array}} \\[2mm]
\end{array}
$$

Suppose that $\textsf{X}$ and $\textsf{Y}$ have $p$ and $q$ elements, respectively. The leftmost rule states that the branch splits into $p+1$ branches: $\textsf{L}^{\prime}(\psi)$ and $\textsf{x}(\psi)$, for every $\textsf{x}\in \textsf{X}$. The rule on the center states that the branch splits into $p$ branches, each of them contaning $\textsf{L}^{\prime}(\gamma)$ and $\textsf{x}(\psi)$, for some $\textsf{x}\in\textsf{X}$. Finally, the rightmost rule splits the branch into $q+p+pq$ branches: each of the leftmost $q$ ones contains $\textsf{L}^{\prime}(\gamma)$ and $\textsf{y}(\psi)$ for some $\textsf{y}\in\textsf{Y}$; each of the following $p$ ones contains $\textsf{L}^{\prime\prime}(\psi)$  and $\textsf{x}(\gamma)$ for some $\textsf{x}\in\textsf{X}$; and each of the final $pq$ ones contains  $\textsf{x}(\gamma)$ and $\textsf{y}(\psi)$ for some $\textsf{x}\in \textsf{X}$ and some $\textsf{y}\in \textsf{Y}$.

\begin{defi} [Labelled tableaux system for $C_n$, for $n\geq 2$] \label{deftablTn} Let $\bbT_n$ be a labelled tableaux system for $C_n$ defined by the following rules, where $i$ takes value among $\{0, 1, \ldots , n-1\}$, $\textsf{I}_{n}$ is the set of labels $\{\lat^{n}_{0}, \ldots , \lat^{n}_{n-1}\}$ and $\textsf{D}_{n}=\textsf{I}_{n}\cup\{\textsf{T}_{n}\}$:
$$\begin{array}{ccc}
\displaystyle \frac{\laT_{n}(\neg \varphi)}{\textsf{I}_{n}(\varphi) \mid \laF_{n}(\varphi)} &  \displaystyle \frac{\lat^{n}_{i}(\neg \varphi)}{\textsf{I}_{n}(\varphi)}  &  \displaystyle \frac{\laF_{n}(\neg \varphi)}{\laT_{n}(\varphi)} \\[2mm]
&&\\[2mm]
%%%%%%
\displaystyle \frac{\laT_{n}(\varphi \land \psi)}{\begin{array}{c}\textsf{D}_{n}(\varphi) \\ \textsf{D}_{n}(\psi)\end{array}} 
&  \displaystyle \frac{\lat^{n}_{i}(\varphi \land \psi)}{\begin{array}{c|c|c}\laT_{n}(\varphi) & \textsf{I}_{n}(\varphi) & \laT_{n}(\psi) \\ \textsf{I}_{n}(\psi) & \textsf{I}_{n}(\psi) & \textsf{I}_{n}(\varphi)\end{array}} &  \displaystyle \frac{\laF_{n}(\varphi \land \psi)}{\laF_{n}(\varphi) \mid \laF_{n}(\psi)} \\[2mm]
&&\\[2mm]
\displaystyle \frac{\laT_{n}(\varphi \lor \psi)}{\textsf{D}_{n}(\varphi)\mid\textsf{D}_{n}(\psi)} 
&  \displaystyle \frac{\lat^{n}_{i}(\varphi \lor \psi)}{\textsf{I}_{n}(\varphi)\mid\textsf{I}_{n}(\psi)} &  \displaystyle \frac{\laF_{n}(\varphi \lor \psi)}{\begin{array}{c}\laF_{n}(\varphi)\\ \laF_{n}(\psi) \end{array}} \\[2mm]
&&\\[2mm]
%%%%%%
\displaystyle \frac{\laT_{n}(\varphi \to \psi)}{\textsf{F}_{n}(\varphi)\mid\textsf{D}_{n}(\psi)} 
&  \displaystyle \frac{\lat^{n}_{i}(\varphi \to \psi)}{\begin{array}{c|c}\laT_{n}(\psi) & \textsf{I}_{n}(\psi) \\ \textsf{I}_{n}(\varphi) & \end{array}} &  \displaystyle \frac{\laF_{n}(\varphi \to \psi)}{\begin{array}{c}\laF_{n}(\psi) \\ \textsf{D}_{n}(\varphi) \end{array}} \\[2mm]
&&\\[2mm]
\end{array}
$$
A branch $\theta$ of a tableau in $\bbT_n$ is said to be {\em closed} if:
\begin{enumerate}
\item it contains two signed formulas $\laL(\varphi)$ and $\laL'(\varphi)$ such that $\laL\neq\laL'$;
\item it contains $\lat^{n}_{0}(\psi)$ and $\lat^{n}_{k}(\psi \land \neg \psi)$, for some formula $\psi$ and $0\leq k\leq n-1$;
\item it contains $\lat^{n}_{k}(\psi)$ and $\laT_{n}(\psi\wedge\neg\psi)$, or $\lat^{n}_{k}(\psi)$ and $\textsf{L}(\psi^{1})$ with $\textsf{L}\neq \lat^{n}_{k-1}$, for some formula $\psi$ and $1\leq k\leq n-1$.
\end{enumerate}
Complete and open branches, as well as closed, complete and open tableaux, are defined in $\bbT_{n}$ as they were defined in $\bbT_{1}$.
\end{defi}

A formula $\varphi$ over $\Sigma$ is said to be {\em provable} by tableaux in $\bbT_{n}$, denoted by $\vdash_{\bbT_{n}}\varphi$, if there exists a closed tableau in $\bbT_{n}$ starting from $\laF_{n}(\varphi)$. The definition of derivations in  $\bbT_{n}$ from premises is as in $\bbT_{1}$.

Next step is proving soundness and completeness of $\bbT_{n}$ w.r.t. the RNmatrix semantics defined by $\matGM_{C_n}$, for $n \geq 2$. Clearly, it suffices proving that $\vdash_{\bbT_n} \varphi$  \ iff \ $\vDash_{\matGM_{C_n}}^\mathsf{RN} \varphi$, for every $\varphi$. The notions from Definition~\ref{valsat} can be easily adapted to  the case $n \geq 2$. Note that, for any signed formula $\textsf{L}(\varphi)$ where $\varphi$ is non-atomic, there is exactly one rule $R$ in $\bbT_{n}$ applicable to it.

The proof of soundness follows the steps of the one given for $\bbT_{1}$. Thus, if $\nu \in \mathcal{F}_{C_{n}}$ satisfies $\textsf{L}(\varphi)$ and $R$ is (the only rule) appliable to it, then $\nu$ satisfies all the formulas of at least one of the branches generated by the application of $R$ to $\textsf{L}(\varphi)$. Let $\mathcal{T}_{0}, \ldots , \mathcal{T}_{m}=\mathcal{T}$ be the sequence of tableaux starting from $\laF_{n}(\varphi)$ ending with a complete tableau, as in Theorem~\ref{soundTC1} for $\bbT_{1}$. Thus, if $\nu\in\mathcal{F}_{C_{n}}$  satisfies $\mathcal{T}_{k}$ then it also satisfies $\mathcal{T}_{k+1}$. From this,  if  $\nvDash_{\matGM_{C_n}}^\mathsf{RN} \varphi$  then there exists $\nu$ satisfying $\laF_{n}(\varphi)$, hence $\nu$ satisfies any completed tableau  $\mathcal{T}$ for $\laF_{n}(\varphi)$, and so  $\mathcal{T}$ must be open. This means that  $\nvdash_{\bbT_n} \varphi$.

Now,  completeness of $\bbT_{n}$ will be stated. A non-empty set $\Gamma$ of labelled formulas (with labels in $\mathbb{B}_{n}$) is a {\em Hintikka set} for $\bbT_{n}$ if it satisfies the following.
\begin{enumerate}
\item If $\textsf{L}(\varphi)$ and $\textsf{L}^{\prime}(\varphi)$ are in $\Gamma$, $\textsf{L}=\textsf{L}^{\prime}$.
\item If $\lat^{n}_{0}(\varphi)$ is in $\Gamma$, none of $\lat^{n}_{0}(\varphi\wedge\neg\varphi), \ldots , \lat^{n}_{n-1}(\varphi\wedge\neg\varphi)$ is in $\Gamma$.
\item If $\lat^{n}_{k+1}(\varphi)$ is in $\Gamma$: $\laT_{n}(\varphi\wedge\neg\varphi)$ is not in $\Gamma$ and, if $\textsf{L}(\varphi^{1})$ is in $\Gamma$, $\textsf{L}=\lat^{n}_{k}$.
% and $\textsf{L}(\varphi^{1})$ is not in $\Gamma$ for $\textsf{L}\neq \lat^{n}%_{k}$.
\item If $\laT_{n}(\neg\varphi)$ is in $\Gamma$, at least one of $\lat^{n}_{0}(\varphi), \ldots , \lat^{n}_{n-1}(\varphi), \laF_{n}(\varphi)$ is in $\Gamma$.
\item If $\lat^{n}_{i}(\neg\varphi)$ is in $\Gamma$, then $\lat^{n}_{j}(\varphi)$ is also in $\Gamma$,  for some $0\leq j\leq n-1$.
\item If $\laF_{n}(\neg\varphi)$ is in $\Gamma$, so is $\laT_{n}(\varphi)$.
\item If $\laT_{n}(\varphi\wedge\psi)\in \Gamma$: either $\laT_{n}(\varphi)\in\Gamma$ and $\laT_{n}(\psi)\in\Gamma$; or $\laT_{n}(\varphi)\in\Gamma$ and $\lat^{n}_{j}(\psi)\in \Gamma$ for some $0\leq j\leq n-1$; or $\laT_{n}(\psi)\in\Gamma$ and $\lat^{n}_{i}(\varphi) \in\Gamma$, for some $0\leq i\leq n-1$; or $\lat^{n}_{i}(\varphi)\in \Gamma$ and $\lat^{n}_{j}(\psi)\in \Gamma$, for some $0\leq i,j\leq n-1$.
\item If $\lat^{n}_{i}(\varphi\wedge\psi)\in\Gamma$: either $\laT_{n}(\varphi)\in\Gamma$ and $\lat^{n}_{j}(\psi)\in\Gamma$, for some $0\leq j\leq n-1$; or $\laT_{n}(\psi)\in\Gamma$ and  $\lat^{n}_{k}(\varphi) \in\Gamma$, for some $0\leq k\leq n-1$; or $\lat^{n}_{k}(\varphi)\in \Gamma$ and $\lat^{n}_{j}(\psi)\in \Gamma$, for some $0\leq k,j\leq n-1$.
\item If $\laF_{n}(\varphi\wedge\psi)$ is in $\Gamma$, either $\laF_{n}(\varphi)\in \Gamma$ or $\laF_{n}(\psi) \in \Gamma$.
\item If $\laT_{n}(\varphi\vee\psi)\in\Gamma$: either $\laT_{n}(\varphi) \in\Gamma$; or $\laT_{n}(\psi) \in\Gamma$; or $\lat^{n}_{i}(\varphi)\in\Gamma$, for some $0\leq i\leq n-1$; or $\lat^{n}_{j}(\psi)\in\Gamma$, for some $0\leq j\leq n-1$.
\item If $\lat^{n}_{i}(\varphi\vee\psi)\in\Gamma$: either $\lat^{n}_{k}(\varphi)\in\Gamma$, for some $0\leq k\leq n-1$; or $\lat^{n}_{j}(\psi)\in\Gamma$, for some $0\leq j\leq n-1$.
\item If $\laF_{n}(\varphi\vee\psi)$ is in $\Gamma$, then both $\laF_{n}(\varphi)$ and $\laF_{n}(\psi)$ are in $\Gamma$.
\item If $\laT_{n}(\varphi\rightarrow\psi)$ is in $\Gamma$: either $\laF_{n}(\varphi)\in\Gamma$; or $\laT_{n}(\psi)\in\Gamma$; or $\lat^{n}_{i}(\psi)\in\Gamma$, for some $0\leq i\leq n-1$.
\item If $\lat^{n}_{i}(\varphi\rightarrow\psi)\in\Gamma$: either $\laT_{n}(\psi)\in\Gamma$ and $\lat^{n}_{k}(\varphi)\in\Gamma$, for $0\leq k\leq n-1$; or $\lat^{n}_{j}(\psi)\in\Gamma$, for $0\leq j\leq n-1$.
\item If $\laF_{n}(\varphi\rightarrow\psi)\in\Gamma$: either $\laT_{n}(\varphi)\in\Gamma$ and $\laF_{n}(\psi)\in\Gamma$; or $\lat^{n}_{i}(\varphi)\in\Gamma$ and $\laF_{n}(\psi)\in\Gamma$, for some $0\leq i\leq n-1$.
\end{enumerate}

As it was done with $\bbT_{1}$, the next step is to prove  that a Hintikka set $\Gamma$ for $\bbT_{n}$ is satisfiable in $\mathcal{F}_{C_{n}}$. That is, there exists a homomorphism $\nu$ in $\mathcal{F}_{C_{n}}$ such that, for any $\varphi\in\Gamma_{0}=\{\varphi\in \bF(\Sigma, \mathcal{V}) : \exists\textsf{L}\in \mathbb{B}_{n}(\textsf{L}(\varphi)\in \Gamma)\}$, if $\textsf{L}(\varphi)\in \Gamma$ then $\nu(\varphi)=L$. As before, we start with a (well-defined) function $\nu_{0}:\Gamma_{0}\rightarrow B_{n}$ such that $\nu_{0}(\varphi)=L$ iff $\textsf{L}(\varphi)\in\Gamma$. This function must be extended to a $\nu \in \mathcal{F}_{C_{n}}$ satisfying  $\Gamma$. For the clauses for Hintikka sets starting with $\laL(\varphi)\in\Gamma$  in which the immediate subformulas of $\varphi$ are also in $\Gamma_{0}$, the proof runs straightforwardly. As in the case of $\bbT_{1}$, difficulties only appear in the other clauses, namely clauses $9$, $10$, $11$, $13$ and $14$.

This is not actually a problem: take clause $11$ to scrutinize. Starting with $\lat^{n}_{i}(\varphi\vee\psi) \in\Gamma$ assume, without loss of generality, that $\lat^{n}_{k}(\varphi)\in\Gamma$, but $\lat^{n}_{j}(\psi)\notin\Gamma$ for all $0\leq j\leq n-1$ (the symmetric case is proved analogously). By construction,  $\nu(\varphi\vee\psi)=t^{n}_{i}$ and $\nu(\varphi)=t^{n}_{k}$, while $\nu(\psi)$ can be arbitrarily defined, only respecting the clauses for $\nu$ being in  $\mathcal{F}_{C_{n}}$. This could, in principle, lead to an incongruence between $\nu(\varphi)$, $\nu(\psi)$ and $\nu(\varphi\vee \psi)$. However, this is guaranteed to not happen, as the definition of a Hintikka set carries sufficient conditions of coherence (in this case for $\lat^{n}_{i}(\varphi\vee\psi)$ and $\lat^{n}_{k}(\varphi)$). Looking at the table for $\tilde{\vee}$ for $\mathcal{A}_{C_{n}}$, we see that regardless of the value taken by $\nu(\psi)$, one has $\nu(\varphi)\tilde{\vee}\nu(\psi)=t^{n}_{j}\tilde{\vee}\nu(\psi)=D_{n}$, which certainly contains $\nu(\varphi\vee\psi)=t^{n}_{i}$ and then $\nu$ will be an homomorphism, for any choice of the value taken by $\nu(\psi)$. Similar situations occur when the other clauses  $9$, $10$, $13$ and $14$ are analyzed.

Once we have that every Hintikka set for $\bbT_{n}$ has a corresponding homomorphism in $\mathcal{F}_{C_{n}}$ that satisfies it, it remains to prove that the set of labelled formulas occurring in an open branch $\theta$ of a completed tableau $\mathcal{T}$ in $\bbT_{n}$ is a Hintikka set for $\bbT_{n}$. But this is obvious from the definitions. All of these results allow one to prove the completeness of tableau systems  w.r.t.  $\matGM_{C_n}$. This means that we have a decision method for $C_{n}$ in $\bbT_{n}$, because of the characteristics of  $\bbT_{n}$.

Although $\bbT_{n}$ is a decision procedure for $C_n$, it is clear that the width of the generated trees grows rapidly, due to the large number of branches generated by the tableau rules. In the next section we shall see that the size of tableaux in $\bbT_{n}$ can be drastically reduced by considering derived rules.

\subsection{Derived tableau rules} \label{der-rules}

In this section we introduce some derived rules for  $\bbT_{1}$ and  $\bbT_{n}$, for $n \geq 2$. The use of such rules allows us to reduce the size of the generated tableaux. The proof of soundness of such rules is almost immediate. A symbol $\star$ means that the branch immediately closes after applying such rule. For  $\bbT_{1}$ we define the following rules:
$$\begin{array}{cccc}
\displaystyle \frac{\laT(\varphi \land \neg\varphi)}{\lat(\varphi)} 
& \displaystyle \frac{\laF(\varphi \land \neg\varphi)}{\laT(\varphi) \mid \laF(\varphi)} &  \displaystyle \frac{\laT(\varphi^{\circ})}{\laT(\varphi) \mid \laF(\varphi)} &  \displaystyle \frac{\lat(\varphi^{\circ})}{\star} 
\end{array}$$
%%%%%%
$$\begin{array}{cccc}
\displaystyle \frac{\laF(\varphi^{\circ})}{\lat(\varphi)} 
&  \displaystyle \frac{\laT(\varphi^{\circ} \land \psi^{\circ})}{\begin{array}{c|c|c|c}\laT(\varphi) & \laT(\varphi) & \laF(\varphi) & \laF(\varphi) \\ \laT(\psi) & \laF(\psi) & \laT(\psi) & \laF(\psi)\end{array}} & \displaystyle \frac{\lat(\varphi^{\circ} \land \psi^{\circ})}{\star} & \displaystyle \frac{\laF(\varphi^{\circ} \land \psi^{\circ})}{\lat(\varphi) \mid \lat(\varphi)} \\[2mm]
&&\\[2mm]
\end{array}
$$
For  $\bbT_{n}$, with $n\geq 2$, consider the set of labels $\textsf{D}_n^{\leq i} = \{\laT_{n}, \lat^{n}_{0}, \ldots , \lat^{n}_{i}\}$, $\textsf{D}_n^{\geq i} =\{\lat^{n}_{i}, \ldots , \lat^{n}_{n-1}\}$, for $0 \leq i \leq n-1$, and $\textsf{D}_n^{\leq -1} = \{\laT_{n}\}$. Then, we can consider the following derived rules for  $\bbT_{n}$, where $0 \leq i \leq n-1$; $r \geq n$; $0 \leq k \leq n-2$; $p \geq n-1$; $1 \leq j \leq n$;  $1 \leq u \leq n-1$; $0 \leq s \leq n - u - 1$; and $n-u \leq m \leq n-1$:
$$\begin{array}{cccc}
\displaystyle \frac{\laT_{n}(\varphi^i \land \neg\varphi^i)}{\lat^{n}_{i}(\varphi)} &  \displaystyle \frac{\laT_{n}(\varphi^r \land \neg\varphi^r)}{\star}  &   \displaystyle \frac{\lat^{n}_{i}(\varphi^k \land \neg\varphi^k)}{\textsf{D}_n^{\geq k+1}(\varphi)} &  \displaystyle \frac{\lat^{n}_{i}(\varphi^p \land \neg\varphi^p)}{\star} \\[2mm]
&&\\[2mm]
%%%%%%
\displaystyle \frac{\laF_{n}(\varphi^i \land \neg\varphi^i)}{\textsf{D}_n^{\leq i-1}(\varphi) \mid \laF_{n}(\varphi)} & \displaystyle \frac{\laT_{n}(\varphi^j)}{\textsf{D}_n^{\leq j-2}(\varphi) \mid \laF_{n}(\varphi)} & \displaystyle \frac{\lat^{n}_{s}(\varphi^u)}{\lat^{n}_{s+u}(\varphi)} 
&  \displaystyle \frac{\lat^{n}_{m}(\varphi^u)}{\star}\\[2mm]
&&\\[2mm]
\displaystyle \frac{\laF^{n}(\varphi^j)}{\lat^{n}_{j-1}(\varphi)} &  \displaystyle \frac{\laT^{n}(\neg\varphi^j)}{\textsf{D}_n^{\geq j-1}(\varphi)} &  \displaystyle \frac{\lat^{n}_{i}(\neg\varphi^u)}{\textsf{D}_n^{\geq u}(\varphi)} &  \displaystyle \frac{\laF_{n}(\neg\varphi^j)}{\textsf{D}_n^{\leq j-2}(\varphi) \mid \laF_{n}(\varphi)} \\[2mm]
&&\\[2mm]
\end{array}$$

Finally, any branch containing either $\lat^{n}_{i}(\varphi^r)$ or  $\lat^{n}_{i}(\neg\varphi^r)$ must close, for every $0 \leq i \leq n-1$ and $r\geq n$. The derived rules for  $\bbT_{n}$ faithfully reflect the information contained in    Table~\ref{tabela3}. Of course other derived rules could be considered, helping to obtain shorter derivations in  $\bbT_{n}$, for $n \geq 1$.

% &  \displaystyle \frac{\lat^{n}_{i}(\varphi^p \land \neg\varphi^p)}{\star}

\section{Final remarks} \label{FinRem}

This paper introduces a new semantics for da Costa's calculi $C_n$, which constitutes a relatively simple decision procedure for these logics inducing, in addition, a second decision procedure by means of tableau systems. The semantics is based on the notion of restricted non-deterministic matrix  (in short, RNmatrix) semantics. The great advantage of RNmatrices with respect to non-deterministic matrices is that they allow to obtain, in certain cases, the characterization,  by means of a single finite RNmatrix, of a logic which is not characterizable by a single finite Nmatrix.  

Different from what happens with the examples presented here, the characterization of a logic by means of a finite RNmatrix do not always ensure the existence of an effective decision procedure. For instance, the four-valued characteristic RNmatrix $\mathcal{K}_{\bf L}=(\mathcal{A}_{\bf L},\{T\},\mathcal{F}_{\bf L})$ for ${\bf L} \in \{{\bf T},  {\bf S4}, {\bf S5}\}$ introduced by Kearns (recall Example~\ref{Kearns}) is far from defining a decent  decision procedure for the modal logic {\bf L}. The drawback  is that the decision problem of the set $\mathcal{F}_{\bf L}$ of valuations is equivalent to the decision problem for {\bf L} itself. This means that, as it stands, the RNmatrix $\mathcal{K}_{\bf S4}$, for instance, does not contribute for the decision problem of modal logic {\bf S4}. 
%The definition of a decision procedure for normal modal logics by means of RNmatrices is a problem still open. 
For an additional discussion on this topic see~\cite[Section~4]{con:far:per:16}.

Even if the set of valuations of a finite RNmatrix is decidable, dealing with it can be a very complex task.
As mentioned in Subsection~\ref{FCn}, the set of valuations  $\mathcal{F}_{\bf C}^n$ of the  RNmatrix $\mathcal{M}_{\bf C}^n$ associated to the characteristic Fidel structure {\bf C} for $C_n$  is very complicated. In this sense, the RNmatrices proposed here for $C_n$ constitute an advance with respect to the decision problem of $C_n$. In particular, RNmatrices (and tableau systems) for \cila, \mbccl\ and $C_1$ simplify the decision procedures obtained in~\cite{Avron:Arieli:Zamansky:18} from infinite Nmatrices.  

The RNmatrix semantics presented here allows us to conceive da Costa's hierarchy of $C$-systems as  a family of (non deterministically) $(n+2)$-valued logics, where $n$ is the number of ``inconsistently true'' truth-values and 2 is the number of ``classical'' or ``consistent'' truth-values (the truth and the false), for every calculus $C_n$. We consider that this novel and elucidative interpretation deserves to be analyzed from a philosophical perspective.

In general, the class of Fidel structures of a given logic has interesting formal properties. In~\cite{CF:20} was proposed the study of Fidel structures as being first-order (Tarskian) structures modeling certain (Horn) axioms, allowing so to study them within the rich framework of model theory. From the observation we made at Subsection~\ref{FCn}, Fidel structures could be alternatively analyzed from the perspective of category theory, specifically within the category of multialgebras. This is  a topic that deserves future research.

The concrete examples of finite-valued characteristic RNmatrices  presented here show that restricted non-deterministic matrices constitute a powerful and promising semantical framework for non-classical logics.
%\vspace*{-2mm}
\paragraph{Acknowledgements.} 

The first author acknowledges support from  the  National Council for Scientific and Technological Development (CNPq), Brazil
under research grant 306530/2019-8. The second author was supported by a doctoral scholarship from CAPES, Brazil. 

%\vspace*{-3mm}

\end{document}